\definecolor{darkblue}{rgb}{0,0,0.4}
\newtheorem{thm}{Theorem}[section]
\newtheorem{lem}[thm]{Lemma}
\theoremstyle{remark}
\newtheorem{remark}[thm]{Remark}
\journal{}
\begin{document}

\begin{frontmatter}

\title{A prediction-correction based iterative convolution-thresholding method for topology optimization of heat transfer problems}

\author[a]{Huangxin Chen}
\ead{chx@xmu.edu.cn}

\author[a]{Piaopiao Dong}
\ead{dongpiaopiao@stu.xmu.edu.cn}

\address[a]{School of Mathematical Sciences and Fujian Provincial Key Laboratory on Mathematical Modeling and High Performance Scientific Computing, Xiamen University, Fujian, 361005, China}

\author[b,c]{Dong Wang\corref{cor1}}
\ead{wangdong@cuhk.edu.cn}

\author[b,c]{Xiao-Ping Wang}
\ead{wangxiaoping@cuhk.edu.cn}

\address[b]{School of Science and Engineering, The Chinese University of Hong Kong, Shenzhen, Guangdong 518172, China}
\address[c]{Shenzhen International Center for Industrial and Applied Mathematics, Shenzhen Research Institute of Big Data, Guangdong 518172, China}
\cortext[cor1]{Corresponding author}

\begin{abstract}
In this paper, we propose an iterative convolution-thresholding method (ICTM) based on prediction-correction for solving the topology optimization problem in steady-state heat transfer equations. The problem is formulated as a constrained minimization problem of the complementary energy, incorporating a perimeter/surface-area regularization term, while satisfying a steady-state heat transfer equation. The decision variables of the optimization problem represent the domains of different materials and are represented by indicator functions. The perimeter/surface-area term of the domain is approximated using Gaussian kernel convolution with indicator functions. In each iteration, the indicator function is updated using a prediction-correction approach. The prediction step is based on the variation of the objective functional by imposing the constraints, while the correction step ensures the monotonically decreasing behavior of the objective functional. Numerical results demonstrate the efficiency and robustness of our proposed method, particularly when compared to classical approaches based on the ICTM.
\end{abstract}

\begin{keyword}
iterative convolution-thresholding method, prediction-correction, topology optimization, heat transfer.
\end{keyword}

    \end{frontmatter}

%\linenumbers

\section{Introduction}

With the rapid advancement of science and technology, electronic devices continue to decrease in size, resulting in an increasing demand for effective heat dissipation solutions \cite{bejan1997constructal}. As devices become smaller, the available surface area for heat dissipation diminishes, making it more challenging for heat to transfer from the device to the surrounding environment. To address these issues, improving the design of the device to enhance heat dissipation becomes crucial. Determining the distribution of high conductivity material within a fixed domain, while considering appropriate boundary conditions, becomes a key consideration \cite{bejan1997constructal}.

At its core, this problem involves optimizing a free interface under constraints expressed as partial differential equations, with the interface serving as the decision variable. To tackle this problem, various models and numerical methods have been developed. These methods primarily focus on representing the interface, approximating the objective functional within this representation, and employing optimization approaches tailored to specific objective functionals. Examples of such methods include the homogenization method \cite{bendsoe1988generating}, the density interpolation method \cite{bendsoe2003topology}, the level set method \cite{allaire2005level,allaire2004structural,liu2005structure,yamada2010topology}, topological derivatives \cite{novotny2012topological}, the phase field method \cite{garcke2015numerical,jin2023adaptive,li2022provably,xie2023effective,li2022unconditionally}, the evolutionary structural optimization method \cite{xie1993simple}, and the moving morphable components method \cite{guo2014doing}.

In the context of heat transfer topology optimization, several methods have been investigated to maximize thermal efficiency in engineering systems, taking into account heat transfer considerations \cite{bejan1997constructal,dbouk2017review}. A notable benchmark problem was introduced by Bejan in \cite{bejan1997constructal}, which focused on optimizing structures for area/volume-to-point heat generation to minimize average or maximum temperature within a domain. The goal was to distribute a limited amount of high-conductivity material. Review papers such as \cite{dbouk2017review,fawaz2022topology} provide an overview of the development of various numerical methods for topology optimization in heat transfer problems.

For example, the level set method has been employed for topology optimization in heat conduction problems under multiple load cases \cite{zhuang2007level}, as well as in two-dimensional thermal problems \cite{ullah2021parametrized}. Parametric L-systems and the finite volume method were used in \cite{ikonen2018topology} to solve the topology optimization problem in two-dimensional heat conduction, aiming to minimize average and maximum temperatures. Three-dimensional topology optimization for heat conduction in a cubic domain, focusing on volume-to-point/surface problems, was investigated in \cite{burger2013three} using the method of moving asymptotes (MMA). Furthermore, an overview of the entransy theory-based method was presented in \cite{chen2013entransy} for optimizing thermal systems, discussing fundamental concepts and applications in various heat transfer scenarios.

Among the various methods utilized for topology optimization, the threshold dynamics method \cite{merriman1992diffusion,esedog2015threshold} has attractted  attention due to its ability to maintain energy stability while reducing computational complexity. This method has been extended to address a wide range of engineering problems, including image segmentation \cite{esedog2006threshold,ma2021characteristic,wang2017efficient,wang2022iterative,liu2023active}, flow network design \cite{chen2022efficient,hu2022efficient}, minimum compliance problems \cite{cen2023iterative}, and heat transfer problems with constant  heat generation rates \cite{cen2023heat}. In \cite{wang2022iterative}, the iterative convolution-thresholding method (ICTM) was introduced as a framework for interface-related optimization problems, achieved by altering the convolution and thresholding steps. This approach has also been extended to target-valued harmonic maps, including Dirichlet partition problems \cite{wang2019diffusion,wang2022efficient}, orthogonal matrix-valued harmonic maps \cite{osting2020diffusion,wang2019interface}, target-valued imaging denoising \cite{osting2020diffusion2}, foam bubbles \cite{wang2019dynamics}, and other areas.

In this paper, we aim to apply the ICTM to solve the topology optimization problem in heat transfer, where the heat generation rate is linked to material distributions. However, we encountered challenges in maintaining energy stability when applying the ICTM directly to such problems. To address this, we propose a prediction-correction-based iterative convolution-thresholding method that proves to be efficient and robust for topology optimization in heat transfer problems. The method exhibits robustness to parameter variations and ensures a monotonically decaying objective functional during iterations. We believe that this novel method can also be applied to other general topology optimization problems with multi-physics constraints.

The paper is organized as follows: In Section \ref{sec:heat-transfer}, we introduce the mathematical model for topology optimization of steady-state heat transfer problems. Section \ref{sec:prediction-correction} presents the derivation of the prediction-correction-based iterative convolution-thresholding method. We discuss the numerical implementation and provide numerical experiments for two and three-dimensional problems to showcase the robustness of our proposed method in Section \ref{sec:numerical}. Finally, we present our conclusions in Section \ref{sec:conclusion}.

\section{Steady-state conductive heat transfer problems} \label{sec:heat-transfer}
In this section, we consider the mathematical model for topology optimization of steady-state heat transfer problem. Denote $\Omega\in \mathbb{R}^d \ (d=2,3)$ as the computational domain, which remains fixed during the optimization process and assume that $\Omega$ is a bounded Lipschitz domain with an outer unit normal $\mathbf{n}$ and that $\mathbb{R}^d \setminus \overline{\Omega}$ is connected. Additionally, we define $\Omega_1\subset \Omega$ as the region of high-conduction materials with a low heat generation rate and $\Omega_2=\Omega\setminus\Omega_1\in\Omega$ as the region of the device that contains electrical components with low conductivity and high heat generation rate. Denote $q_i$ and $\kappa_i$ as the heat generation rate and the thermal conductivity in $\Omega_i$, respectively. The boundary of $\Omega$ is denoted by $\Gamma \colon = \Gamma_D \cup \Gamma_N$ where the Dirichlet boundary condition is imposed on $\Gamma_D$ and the Neumann boundary condition is imposed on $\Gamma_N$. See Figure~\ref{m1} for a diagram.
\begin{figure}
	\centering
	\setlength{\abovecaptionskip}{-0.2cm}	
        \includegraphics[width=0.4\linewidth]{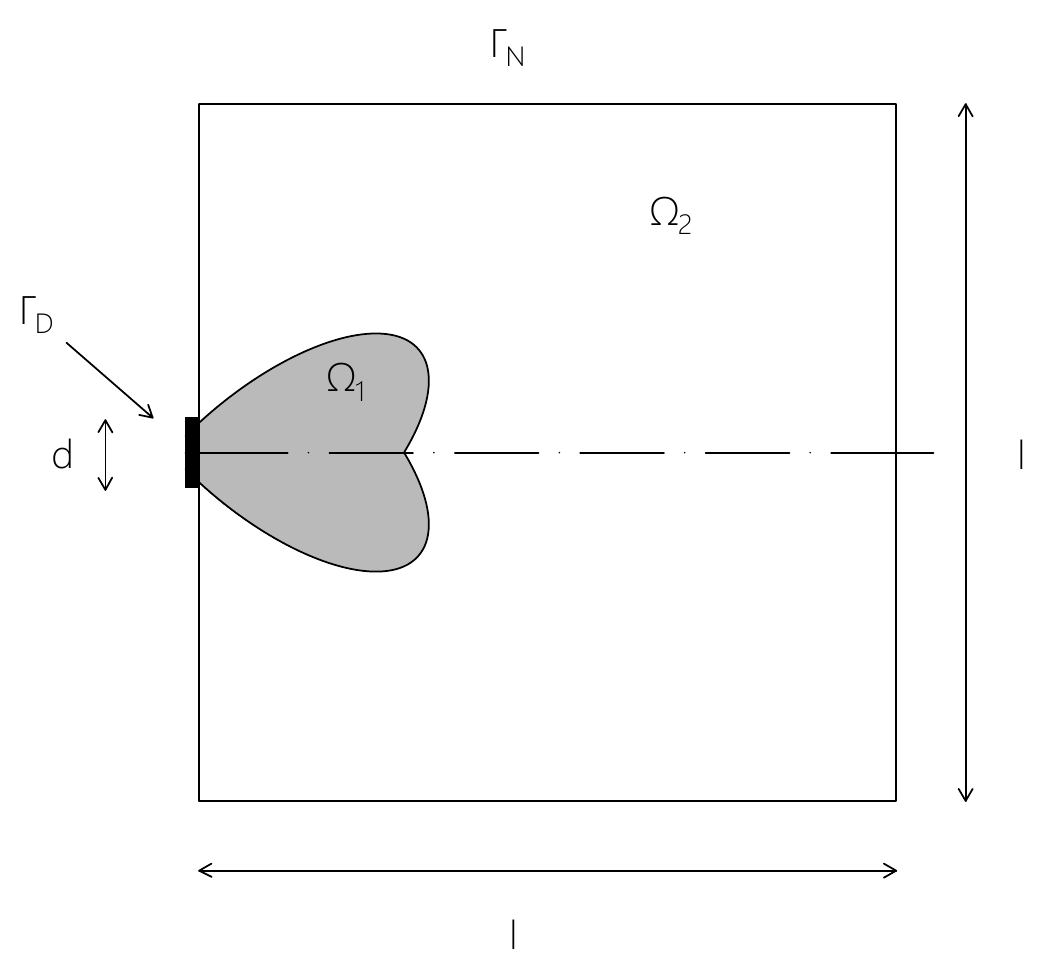}
	\caption{$\Omega_1\in \Omega$ is the region of high conductive material with low heat-generation rate. $\Omega_2=\Omega\setminus\Omega_1\in\Omega$ is the region of low conductive material with high heat-generation rate, and it represents an area of the device that is filled with electrical components (cf. \cite{ikonen2018topology}). See Section~\ref{sec:heat-transfer}.}
    \label{m1}
\end{figure}

Motivated by \cite{boichot2016genetic,ikonen2018topology}, we consider the following constrained optimization problem,
\begin{equation}\label{o1}
	\min_{(\Omega_1,T)}J(\Omega_1,T)=\int_{\Omega}q T d\textbf{x} + \gamma|\Gamma|,
\end{equation}
subject to
\begin{align}\label{t1}\left\{\begin{aligned}
		\nabla\cdot(\kappa \nabla T)+q &=0,\ \   &&\rm in\  \Omega, \\
		(\kappa\nabla T)\cdot \mathbf{n}&=0, \ \ &&\rm on\  \Gamma_N,\\
		T&=0,\ \ &&\rm on\  \Gamma_D,\\
		|\Omega_1| &= \beta|\Omega|,\ \ &&\rm with\ a\ fixed\ parameter\ \beta\in(0,1),
	\end{aligned}\right.\end{align}
where $T$ is the temperature field, $\textbf{n}$ is the outward normal vector of the boundary, $\kappa = \kappa_1 \chi_{\Omega_1}+ \kappa_2 \chi_{\Omega_2}$, $q = q_1 \chi_{\Omega_1}+ q_2 \chi_{\Omega_2}$, and $\chi_{\Omega_i}$ is the indicator function of $\Omega_i$. This problem is a classic area/volume-to-point heat conduction problem. It involves integrating a specific quantity of highly conductive material within an area or volume, with the purpose of dissipating the generated heat to a heat sink (point) by pure heat conduction with different heat generation rate within the domain. This model can also be derived based on taking average over the height of the domain from a three dimensional heat sink design problem \cite{van2014numerical} without fluid flow.

%The objective function of equation (\ref{o1}) consists of various components. Firstly, the thermal compliance energy can be obtained from the reference \cite{lohan2017topology}. Secondly, the perimeter regularization term is incorporated, where $|\Gamma|$ represents the perimeter of the boundary of $\Gamma=\partial \Omega_1$, and $\gamma>0$ is the weighting parameter.

%  {\color{red} modify later!
%  \begin{remark}\label{rem1}
% 	For the convenience of the following analysis, we assume that $0<q_1<q_2, 0<\kappa_2<\kappa_1$ are fixed and bounded, $T$ is measure and bounded.
%    \end{remark}
% }

\section{The prediction-correction based algorithm}
\label{sec:prediction-correction}
In this section, we present a prediction-correction based iterative convolution-thresholding method for solving the topology optimization problem~(\ref{o1}). Fundamentally, this is a free interface related optimization problem with the decision variable being an interface between two different materials. Motivated by the new derivation of diffusion generated motion by mean curvature flow \cite{esedog2015threshold}, the iterative convolution-thresholding method (ICTM) for image segmentation \cite{wang2017efficient,wang2022iterative} and fluid-solid topology optimization \cite{chen2022efficient,hu2022efficient}, we consider to use the indicator function of a domain to implicitly represent the interface between two different materials, approximate the problem using heat kernel convolutions with indicator functions, and derive the prediction-correction-based iterative convolution-thresholding method to approximately solve the problem~(\ref{o1}).

%Moreover, the scalar variables $\kappa$ and $q$ are redefined.

\subsection{Approximate objective functional}
Define an admissible set $\mathcal{B}$ as follows:
\begin{equation}\label{B}
	\mathcal{B}:=\big\{\chi\in BV(\Omega)\ | \  \chi(\textbf{x})=\{0,1\},  \int_{\Omega} \chi \ d\textbf{x}=V_0\big\},
\end{equation}
where $BV(\Omega)$ is the vector space of functions with bounded variation in $\Omega$ and $V_0 = \beta |\Omega|$ is the fixed volume of the high conduction domain. We denote the indicator function $\chi$ of the high conduction domain $\Omega_1$, i.e.,
\begin{align}\label{chia}
	\chi(\textbf{x}):=\left\{\begin{aligned}
		&1,\ \ \ \ \textrm{if}\  \textbf{x}\in\  \Omega_1,\\
		&0, \ \ \ \ \textrm{otherwise}.
	\end{aligned}\right.\end{align}
%and $\chi_2(\textbf{x})$ as the indicator function of $\Omega_2$, i.e., $\chi_2(\textbf{x})=1-\chi_1(\textbf{x})$. 

The boundary $\Gamma$ of $\Omega_1$ is then implicitly represented by $\chi$ and the perimeter of $\Gamma$ is then approximated by
\begin{equation}\label{r}
	|\Gamma|\approx \sqrt\frac{\pi}{\tau}\int_\Omega \chi G_{\tau}* (1-\chi) \ d\textbf{x},
\end{equation}
where $$G_\tau = \frac{1}{(4\pi\tau)^{d/2}}\exp\left(-\frac{|\textbf{x}|^2}{4\tau}\right)$$ is the Gaussian kernel. The rigorous proof of the convergence as $\tau \rightarrow 0$ can be found in Miranda et al. \cite{miranda2007short}.  

From the above notation of $\mathbf{\chi}$, we approximate the conductivity and heat generation rate as,
\begin{align}\label{kq}
	&\kappa(\chi)=\kappa_1 G_\tau \ast \chi +\kappa_2 G_\tau \ast (1-\chi),\\
	&q(\chi) = q_1 G_\tau \ast \chi +q_2 G_\tau \ast (1-\chi).
\end{align}

To make the problem be more regularized, we propose to consider the following objective functional with an additional regularization term  $\int_{\Omega} \kappa(\chi)\nabla T \cdot \nabla T \ d\textbf{x}$ in an approximate form:
\begin{equation}\label{o12}
	 J^\tau(\chi,T)=\int_{\Omega}q(\chi)T \ d\textbf{x} +  \frac{\xi}{2}\int_{\Omega} \kappa(\chi)\nabla T \cdot \nabla T \ d\textbf{x} + \gamma\sqrt\frac{\pi}{\tau}\int_\Omega \chi G_{\tau}\ast (1-\chi) \ d\textbf{x},
\end{equation}
where $\xi>0$ is a parameter. 

Finally, we have the following approximate and regularized formula of problem~(\ref{o1}) as
\begin{equation}\label{o111}
\begin{cases}
	&\min\limits_{\mathbf{\chi}\in \mathcal{B}}J^\tau(\mathbf{\chi},T), \\
	&\textrm{s.t.}  \ T \in Q(\chi),\ \ 
\end{cases}
\end{equation}
where $Q(\chi)$ is the system \eqref{t1} defined by a given $\chi$.

\subsection{The iterative convolution-thresholding method (ICTM) for \eqref{o111} similar to \cite{wang2017efficient,wang2022iterative}} \label{sec:ICTM}

In this section, we first follow the iterative convolution-thresholding method (ICTM) introduced in \cite{wang2017efficient,wang2022iterative} to derive an iterative algorithm to solve \eqref{o111}. We will show that the original ICTM will fail for this type of problems through both theoretical and numerical observations.

To be specific, given an initial guess $\chi^{0}$, in the original ICTM framework, one may seek a sequence
\begin{equation*}
	\chi^{1},\chi^{2},\cdots,\chi^{k+1},\cdots
\end{equation*}
to approximately find the optimizer of the problem. 

One can first relax the constraint $T\in Q(\chi)$ via introducing a Lagrange multiplier $T^*$ to have 
\begin{equation}\label{ro1}
	\tilde{J}^\tau(\chi,T,T^*)=J^\tau(\chi,T)+\int_{\Omega}(-\nabla\cdot(\kappa(\chi)\nabla T)-q(\chi))\cdot T^* d\textbf{x}.
\end{equation}
Based on the first order necessary condition of the optimization problem, one obtains a solution for a given $\chi$ with $T$ and $T^*$ satisfying
\begin{equation*}
	\frac{\delta\tilde{J}^\tau}{\delta T}=0,\ \ \ \frac{\delta\tilde{J}^\tau}{\delta T^*}=0.
\end{equation*}
That is
\begin{align}\label{and}\left\{\begin{aligned}
		-\nabla\cdot(\kappa(\chi)\nabla T)-q(\chi)&=0,\ \ &&\rm in\ \ \Omega,\\
		T&=0,\ \ &&\rm on\ \ \Gamma_D,\\
		\kappa(\chi)\nabla T\cdot \mathbf{n}&=0,\ \ &&\rm on\ \ \Gamma_N,
	\end{aligned}\right.\end{align}
and 
\begin{align}\label{ad}\left\{\begin{aligned}
		\nabla\cdot(\kappa(\chi)\nabla T^*)&=q(\chi)-\xi (\nabla\cdot(\kappa(\chi)\nabla T)),\ \ &&\rm in\ \ \Omega,\\
		T^*&=0,\ \ &&\rm on\ \ \Gamma_D,\\
		\kappa(\chi)\nabla T^*\cdot \mathbf{n}&=0,\ \ &&\rm on\ \ \Gamma_N.
	\end{aligned}\right.\end{align}
The ICTM framework \cite{wang2017efficient,wang2022iterative} then proposes to generate a sequence as follows:
\begin{equation*}
	\chi^{1}, T^1, T^{*1}, \chi^{2}, T^2, T^{*2},\cdots,\chi^{k}, T^k, T^{*k},\cdots
\end{equation*}
where $T^k$ and $T^{*k}$ are solved from \eqref{and} and \eqref{ad} with $\chi^k$.

Given $T^k, T^{*k}$, we then write the objective functional $\tilde{J}^\tau(\chi,T^k,T^{*k})$  into $\tilde{J}^{\tau,k}(\chi)$ as follows:
\begin{align}\label{on}
	\tilde{J}^{\tau,k}(\chi)&=\int_{\Omega}q(\chi)T^k  \ d\textbf{x} + \frac{\xi}{2} \int_{\Omega} \kappa(\chi) \nabla T^k \cdot \nabla T^k \ d\textbf{x}
	+ \gamma\sqrt\frac{\pi}{\tau}\int_\Omega \chi G_{\tau}\ast (1-\chi) \  d\textbf{x} \nonumber\\
	&\ \ \ \ +\int_{\Omega}\kappa(\chi)\nabla T^k\cdot\nabla T^{*k} \ d\textbf{x}-\int_{\Omega}q(\chi) T^{*k} \ d\textbf{x}.
\end{align}
Based on the definition of $\kappa(\chi)$ and $q(\chi)$,  it is straightforward to show that $\tilde{J}^{\tau,k}(\chi)$ is concave with respect to $\chi$ in the following lemma.

\begin{lem}
$\tilde{J}^{\tau,k}(\chi)$ is concave with respect to $\chi$.
\end{lem}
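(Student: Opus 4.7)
The plan is to observe that, since $T^k$ and $T^{*k}$ are fixed functions determined by the previous iterate $\chi^k$ and do not depend on the argument $\chi$, all of $\tilde{J}^{\tau,k}$ is explicit in $\chi$, and its dependence can be split into an affine part plus a single genuinely nonlinear quadratic part coming from the perimeter approximation. Concavity in $\chi$ will then follow if I can show that the quadratic part is concave.

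First I would invoke the definitions in \eqref{kq} to note that
$\kappa(\chi)=\kappa_1 G_\tau\ast\chi+\kappa_2 G_\tau\ast(1-\chi)$ and
$q(\chi)=q_1 G_\tau\ast\chi+q_2 G_\tau\ast(1-\chi)$
are both affine in $\chi$, because convolution against the fixed kernel $G_\tau$ is a bounded linear operator. Consequently the first, second, fourth, and fifth terms of \eqref{on},
$$\int_\Omega q(\chi)T^k\,d\mathbf{x},\quad \tfrac{\xi}{2}\int_\Omega \kappa(\chi)\nabla T^k\cdot\nabla T^k\,d\mathbf{x},\quad \int_\Omega \kappa(\chi)\nabla T^k\cdot\nabla T^{*k}\,d\mathbf{x},\quad \int_\Omega q(\chi)T^{*k}\,d\mathbf{x},$$
are all affine in $\chi$. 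Since affine functionals are both convex and concave, it remains only to analyze the perimeter term.

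For the remaining term, I would expand
$$\int_\Omega \chi\,G_\tau\ast(1-\chi)\,d\mathbf{x}=\int_\Omega \chi\,(G_\tau\ast 1)\,d\mathbf{x}-\int_\Omega \chi\,(G_\tau\ast\chi)\,d\mathbf{x}.$$
The first piece is again linear in $\chi$, so concavity of the whole functional reduces to showing that $\chi\mapsto\int_\Omega \chi\,G_\tau\ast\chi\,d\mathbf{x}$ is convex (so that its negative is concave). This is the symmetric quadratic form associated with the Gaussian kernel. By Plancherel, after extending $\chi$ by zero outside $\Omega$,
$$\int_\Omega \chi\,(G_\tau\ast\chi)\,d\mathbf{x}=\int_{\mathbb{R}^d}\widehat{G_\tau}(\boldsymbol{\xi})\,|\widehat{\chi}(\boldsymbol{\xi})|^2\,d\boldsymbol{\xi},$$
and since $\widehat{G_\tau}(\boldsymbol{\xi})=e^{-\tau|\boldsymbol{\xi}|^2}>0$, this quadratic form is positive semidefinite, hence convex in $\chi$. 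Adding back the affine terms preserves concavity, yielding the claim.

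The main obstacle, and essentially the only nontrivial ingredient, is the positivity of $\widehat{G_\tau}$, i.e.\ the positive-definiteness of the Gaussian kernel; everything else is linearity of convolution and of the PDE coefficients in $\chi$. One subtle point to be careful about is the extension of $\chi$ to $\mathbb{R}^d$ when interpreting the convolution, but since $\chi\in\mathcal{B}$ is supported in $\Omega$ the zero extension is natural and the Fourier argument applies without modification.
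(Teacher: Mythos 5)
Your proof is correct and follows essentially the same route as the paper's: both isolate the affine dependence of $q(\chi)$ and $\kappa(\chi)$ (so four of the five terms are trivially concave) and reduce the claim to the concavity of $\int_\Omega \chi\, G_\tau\ast(1-\chi)\,d\mathbf{x}$, which rests on the positive semidefiniteness of the quadratic form $\chi\mapsto\int_\Omega \chi\,(G_\tau\ast\chi)\,d\mathbf{x}$. The only difference is how that positivity is certified: you use Plancherel and $\widehat{G_\tau}>0$, while the paper uses the semigroup factorization $G_\tau = G_{\tau/2}\ast G_{\tau/2}$ to rewrite the quadratic part as $-\|G_{\tau/2}\ast\chi\|_{L^2}^2$; these are two standard and equivalent ways to exhibit the Gaussian as a positive definite kernel.
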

\begin{proof}
Because $q(\chi)$ and $\kappa(\chi)$ are linear functions in $\chi$, we only need to show the concavity of 
$$\int_\Omega \chi G_{\tau}\ast (1-\chi) \  d\textbf{x}. $$
From the semi-group property of the operator $G_{\tau}\ast $, we can write 
\begin{align*}
\int_\Omega \chi G_{\tau}\ast (1-\chi) \  d\textbf{x}  = & \int_\Omega \chi G_{\tau/2}\ast \left[G_{\tau/2}\ast (1-\chi)\right] \  d\textbf{x}  \\ 
=&  \int_\Omega \left[G_{\tau/2}\ast\chi\right]  \left[G_{\tau/2}\ast (1-\chi)\right] \  d\textbf{x}  \\
= & \int_\Omega \left[G_{\tau/2}\ast\chi\right]  \left[1-G_{\tau/2}\ast \chi \right] \  d\textbf{x} ,
\end{align*}
which can directly imply the concavity of the term combining with the fact the $G_{\tau/2}\ast $ is a positive definite operator.
\end{proof}

The next step is to find $\chi^{k+1}$ such that
\begin{equation}\label{chi}
	\chi^{k+1}=\arg\min_{\chi\in B}\tilde{J}^{\tau,k}(\chi).
\end{equation}
It is the minimization of a concave function on a nonconvex admissible set $\mathcal{B}$. However, we can relax it to a problem defined on a convex admissible set by finding $\chi^{k+1}$
such that
\begin{equation}\label{chi1}
	\chi^{k+1}=\arg\min_{\chi \in \mathcal{\mathcal{H}}}\tilde{J}^{\tau,k}(\chi),
\end{equation}
where $\mathcal{H}$ is the convex hull of $\mathcal{B}$ defined as follows:
\begin{equation}\label{H}
	\mathcal{H}:=\big\{\chi \in BV(\Omega)\ | \  \chi(\textbf{x}) \in [0,1],  \int_{\Omega} \chi \ d\textbf{x}=V_0\big\}. %and \int_{\Omega}t_{1}d\textbf{x}=V_0\big\}.
\end{equation}
It is not difficult to see that the relaxed problem (\ref{chi1}) is equivalent to the original problem (\ref{chi}) which we summarize in the following lemma.
\begin{lem}\label{lem6}
	Let $T^k$ and $T^{*k}$ be given functions. Then
	\begin{equation*}
		\arg\min_{\chi\in \mathcal{B}}\tilde{J}^{\tau,k}(\chi) \in \arg\min_{\chi\in \mathcal{H}}\tilde{J}^{\tau,k}(\chi).
	\end{equation*}
\end{lem}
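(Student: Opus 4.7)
The plan is to reduce the claim to showing that the infimum of $\tilde{J}^{\tau,k}$ over the convex set $\mathcal{H}$ is attained at an element of the nonconvex subset $\mathcal{B}$. Since $\mathcal{B}\subset\mathcal{H}$, we automatically have $\min_{\mathcal{H}}\tilde{J}^{\tau,k}\leq\min_{\mathcal{B}}\tilde{J}^{\tau,k}$, and the claim is equivalent to these two infima coinciding: once equality holds, any minimizer on $\mathcal{B}$ achieves the global minimum on $\mathcal{H}$ and hence lies in $\arg\min_{\mathcal{H}}\tilde{J}^{\tau,k}$. The whole argument is powered by the concavity of $\tilde{J}^{\tau,k}$ in $\chi$ established in the preceding lemma, together with an identification of the extreme points of $\mathcal{H}$.

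First I would characterize the extreme points of $\mathcal{H}$ as precisely the elements of $\mathcal{B}$. Any $\chi\in\mathcal{B}$ is clearly extreme because the $\{0,1\}$-valued constraint together with $\chi_1(\textbf{x}),\chi_2(\textbf{x})\in[0,1]$ forces $\chi_1=\chi_2=\chi$ in any convex decomposition. Conversely, if $\chi\in\mathcal{H}$ satisfies $|\{0<\chi<1\}|>0$, I would choose $\varepsilon>0$ small enough that $E_\varepsilon:=\{\varepsilon\leq\chi\leq 1-\varepsilon\}$ has positive measure, split $E_\varepsilon$ into two disjoint equal-measure subsets $E_\varepsilon^{\pm}$, and set $\eta:=\chi_{E_\varepsilon^{+}}-\chi_{E_\varepsilon^{-}}$. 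Then $\int_\Omega\eta\,d\textbf{x}=0$ and $\chi\pm\varepsilon\eta\in\mathcal{H}$, so the nontrivial decomposition $\chi=\tfrac12(\chi+\varepsilon\eta)+\tfrac12(\chi-\varepsilon\eta)$ shows that $\chi$ is not extreme.

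With this identification in hand, I would take any minimizer $\chi^\star\in\mathcal{H}$. If $\chi^\star\in\mathcal{B}$ we are done; otherwise I build $\eta$ as above and apply concavity to obtain
\begin{equation*}
\tilde{J}^{\tau,k}(\chi^\star)\geq\tfrac12\tilde{J}^{\tau,k}(\chi^\star+\varepsilon\eta)+\tfrac12\tilde{J}^{\tau,k}(\chi^\star-\varepsilon\eta),
\end{equation*}
so minimality forces $\tilde{J}^{\tau,k}(\chi^\star\pm\varepsilon\eta)=\tilde{J}^{\tau,k}(\chi^\star)$. Enlarging $\varepsilon$ until $\chi^\star\pm\varepsilon\eta$ first saturates the value $0$ or $1$ on a positive-measure subset of $E_\varepsilon$ strictly shrinks the gray region $\{0<\chi^\star<1\}$ while preserving the objective value. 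Iterating and passing to a weak-$*$ limit in $L^\infty(\Omega)$ produces an element $\chi^\flat\in\mathcal{B}$ with $\tilde{J}^{\tau,k}(\chi^\flat)=\tilde{J}^{\tau,k}(\chi^\star)$, establishing the equality of the two infima.

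The main obstacle I anticipate is rigorously ensuring that the iterative saturation procedure terminates or converges; the cleanest way around this is to invoke Bauer's minimum principle on the weak-$*$ compact convex set $\mathcal{H}\subset L^\infty(\Omega)$ with the concave weak-$*$ continuous functional $\tilde{J}^{\tau,k}$. Continuity in the weak-$*$ topology follows because the non-constant terms of $\tilde{J}^{\tau,k}$ in (\ref{on}) are either linear in $\chi$ (through the linear expressions $\kappa(\chi)$ and $q(\chi)$) or of the smoothed bilinear form $\int_\Omega\chi\,G_\tau\ast(1-\chi)\,d\textbf{x}$ handled via the Gaussian kernel. Bauer's principle then directly delivers a minimizer at an extreme point of $\mathcal{H}$, which by the characterization above lies in $\mathcal{B}$.
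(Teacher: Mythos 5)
Your proposal is correct and follows essentially the same route as the paper's own (two-line) proof: the minimum of the concave functional $\tilde{J}^{\tau,k}$ over the convex set $\mathcal{H}$ is attained at an extremal element, and the extremal elements of $\mathcal{H}$ are exactly the characteristic functions in $\mathcal{B}$. Your version is in fact more careful than the paper's, which loosely says ``$\mathcal{B}$ is the boundary of $\mathcal{H}$'' where the correct notion is the set of extreme points, and your appeal to Bauer's minimum principle together with the weak-$*$ compactness of $\mathcal{H}$ and the smoothing effect of $G_\tau$ supplies the attainment argument the paper leaves implicit.
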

\begin{proof}
It is a direct consequence of the following two facts:\\
1. The minimizer of a concave functional on a convex set can always be attained at the boundary of the convex set.\\
2. $\mathcal {B}$ is the boundary of its convex hull $\mathcal{H}$. 
\end{proof}

Now we give a simple iterative convolution-thresholding method to solve (\ref{chi1}) by a sequential linear programming. Firstly, we linearize the objective functional  $\tilde{J}^{\tau,k}(\chi)$ at $\chi^k$,
\begin{equation}\label{l1}
	\tilde{J}^{\tau,k}(\chi)\approx \tilde{J}^{\tau,k}(\chi^k)+\mathcal{L}^{\tau,k}_{\chi^k}(\chi-\chi^k),
\end{equation}
where
\begin{align*}\label{l2}
	\mathcal{L}^{\tau,k}_{\chi^k}(\chi)&=\int_{\Omega}\chi \Phi^k \ d\textbf{x},
\end{align*}
and \[\Phi^k = (q_1-q_2)G_\tau\ast(T^k-T^{*k})+\gamma\sqrt{\frac{\pi}{\tau}}G_\tau\ast (1-2\chi^k) +(k_1-k_2) G_\tau\ast(\frac{\xi}{2}\nabla T^k\cdot\nabla T^k+\nabla T^k\cdot\nabla T^{\ast k}) .\]
Then (\ref{chi1}) can be approximated into
\begin{equation}\label{chi2}
	\chi^{k+1}=\arg\min_{\chi \in \mathcal{H}}\mathcal{L}^{\tau,k}_{r^k}(\chi)=\arg\min_{\chi \in \mathcal{H}}\int_{\Omega}\chi \Phi^k \ d\textbf{x},
\end{equation}
where we dropped the constant terms determined by $\chi^k,~ T^k,~ T^{*k}$.

By introducing a Lagrange multiplier $\sigma$ for the constraint $\int_\Omega \chi \ d\textbf{x}  = V_0$, because the constraint for $\chi$ is bounded in $[0,1]$, one can write the first order necessary condition of problem \eqref{chi2} with the volume preserving constraint by
\[
\begin{cases}
\dfrac{\delta (\mathcal{L}^{\tau,k}_{\chi^k}(\chi) - \sigma(\int_\Omega \chi \ d\textbf{x} -V_0))}{\delta \chi} (x) = \Phi^k-\sigma \leq 0  \ \ \ \textrm{if} \ \  \chi(x)=1 , \\
\dfrac{\delta (\mathcal{L}^{\tau,k}_{\chi^k}(\chi) - \sigma (\int_\Omega \chi \ d\textbf{x} -V_0))}{\delta \chi}(x)  = \Phi^k-\sigma \geq 0  \ \ \ \textrm{if} \ \ \chi(x)=0, \\
\dfrac{d(\mathcal{L}^{\tau,k}_{\chi^k}(\chi) - \sigma (\int_\Omega \chi \ d\textbf{x} -V_0))}{d \sigma} = \int_\Omega \chi \ d\textbf{x} -V_0 = 0. 
\end{cases}
\]
Then, based on the linearity of the functional, one can easily arrive that (\ref{chi2}) can be solved in a pointwise manner by
\begin{align}\label{chi3}
		\chi^{k+1} = \begin{cases} 1 & \ \textrm{if} \ \Phi^k\leq \sigma, \\
0 & \ \textrm{otherwise}.
		\end{cases}	
		\end{align}
where $\sigma$ is chosen as a constant such that $\int_{\Omega}\chi^{k+1}(\textbf{x})d\textbf{x}=V_0$.

The algorithm for problem \eqref{o111} under the ICTM framework is summarized in Algorithm~\ref{a:MBO}.
\begin{algorithm}[ht!]
\DontPrintSemicolon
 \KwIn{$\chi^0$: Initial guess, $\tau>0$, $\xi >0$, $\kappa_1,\kappa_2, q_1,q_2$.}
 \KwOut{$\chi^\ast \in \mathcal{B}$.}
 \While{$\|\chi^{k+1}-\chi^k\|_2>tol$}{
{\bf 1.} For the fixed $\chi^k$, solve
\begin{align*}\left\{\begin{aligned}
		-\nabla\cdot(\kappa(\chi^k)\nabla T)-q(\chi^k)&=0,\ \ &&\rm in\ \ \Omega,\\
		T&=0,\ \ &&\rm on\ \ \Gamma_D,\\
		\kappa(\chi^k)\nabla T\cdot \mathbf{n}&=0,\ \ &&\rm on\ \ \Gamma_N,
	\end{aligned}\right.\end{align*}
and 
\begin{align*}\left\{\begin{aligned}
		\nabla\cdot(\kappa(\chi^k)\nabla T^*)&=q(\chi^k)-\xi (\nabla\cdot(\kappa(\chi^k)\nabla T)),\ \ &&\rm in\ \ \Omega,\\
		T^*&=0,\ \ &&\rm on\ \ \Gamma_D,\\
		\kappa(\chi^k)\nabla T^*\cdot \mathbf{n}&=0,\ \ &&\rm on\ \ \Gamma_N
	\end{aligned}\right.\end{align*}
	to have $T^k$ and $T^{*k}$.\\
{\bf 2.} Use $T^k$ and $T^{*k}$ to evaluate 
 \[\Phi^k = (q_1-q_2)G_\tau\ast(T^k-T^{*k})+\gamma\sqrt{\frac{\pi}{\tau}}G_\tau\ast (1-2\chi^k) +(k_1-k_2) G_\tau\ast(\frac{\xi}{2}\nabla T^k\cdot\nabla T^k+\nabla T^k\cdot\nabla T^{\ast k}) .\]
{\bf 3.} Find a proper $\sigma$ such that $\chi^{k+1}$
\[\chi^{k+1}(x) = \begin{cases}1 \ \  \textrm{if}  \ \Phi^k(x)\leq \sigma,   \\
0 \ \  \textrm{otherwise,} \end{cases}\]
with $\int_\Omega \chi^{k+1}  \ d\textbf{x} = V_0$.
 }
\caption{The ICTM for approximating minimizers of \eqref{o111}. }
\label{a:MBO}
\end{algorithm}

\subsection{The failure of the ICTM}\label{sec:failureICTM}

From the derivation of the ICTM in Section~\ref{sec:ICTM}, one can have
   \begin{equation*}
		\tilde{J}^\tau(\mathbf{\chi}^{k+1},T^k)\leq \tilde{J}^\tau(\mathbf{\chi}^{k},T^k),
	\end{equation*}
such that
\begin{equation}\label{ed}
J^\tau(\mathbf{\chi}^{k+1},T^k)\leq J^\tau(\mathbf{\chi}^{k},T^k)
\end{equation}
for any $ \ \tau>0$. Using the method of Lagrange multiplier and the fact that $J^\tau(\mathbf{\chi}^{k},T)$ is convex with respect to $T$, one can obtain that $T^{k+1}$ is the minimizer of $J^\tau(\mathbf{\chi}^{k+1},T)$ satisfying the constraint defined in \eqref{and}. However, $T^k$ does not satisfy the constraint in \eqref{and} with $\chi^{k+1}$, that is, one can not have 
\[J^\tau(\chi^{k+1},T^{k+1})\leq J^\tau(\chi^{k+1},T^k),\]
and thus one can not have 
\[J^\tau(\chi^{k+1},T^{k+1})\leq J^\tau(\chi^k,T^k).\]
We note that this type of oscillation in the objective functional is also observed numerically. In Figure~\ref{Example1_fail}, we list an example implemented by Algorithm~\ref{a:MBO}. In this example, we impose the Dirichlet boundary condition on the left boundary ${0}\times[0.45,0.55]$ and a Neumann boundary condition on the remaining boundary. We set $\tau = 1\times10^{-4}$, $\gamma = 30$ and $\xi=1\times10^{-5}$. We observe that the approximate solution of $\chi$ is not stable and exhibits variability, switching between multiple solutions as displayed in Figure~\ref{Example1_fail}. Moreover,  the values of the objective functional oscillate during the iterations. 

Indeed, this is a common issue in the classical ICTM for topology optimization with multi-physics constraints. The classical ICTM works very well and converges fast both numerically \cite{wang2017efficient,wang2022iterative} and theoretically \cite{wang2022modularized}, provided that one can have the monotonically decay of the objective functional during iterations. However, for many optimal design problems with multi-physics constraints, the classical ICTM usually fails without monotonically decay from the numerical observation. In next section, we will discuss a novel prediction-correction based ICTM for the cases where the classical ICTM fails.

\begin{figure}[htbp]
	\centering
        \includegraphics[width=0.3\linewidth,trim=5cm 10cm 3cm 10cm,clip]{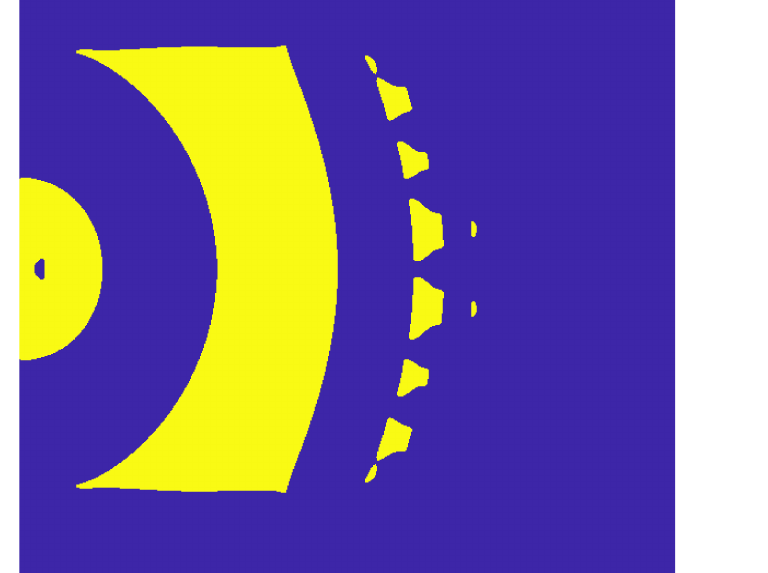}
	\includegraphics[width=0.3\linewidth,trim=5cm 10cm 3cm 10cm,clip]{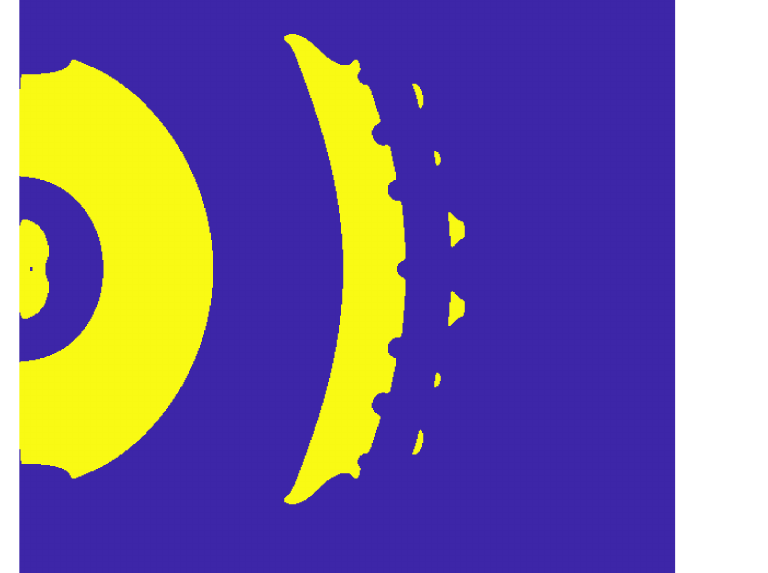}
	\includegraphics[width=0.3\linewidth,trim=2.5cm 8.5cm 3cm 8.5cm,clip]{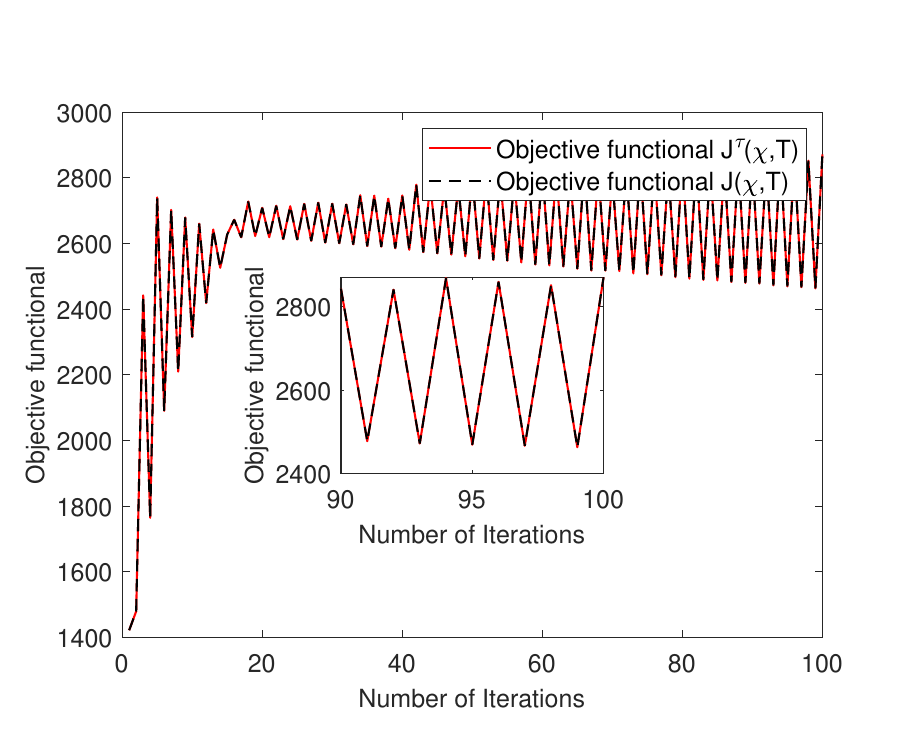}
	\caption{ The results uses ICTM on a $600\times 600$ grid. The parameters are set as $\tau = 1\times10^{-4}$, $\gamma = 30$ and $\xi=1\times10^{-5}$. Left:  The snapshot of $\chi$ at the $99$-th iteration. Middle:  The snapshot of $\chi$ at the $100$-th iteration. Right: The change of objective functional values during iterations. See Section~\ref{sec:failureICTM}.}
	\label{Example1_fail}
\end{figure}

\subsection{A prediction-correction based ICTM}\label{sec:pcICTM}

Based on the above argument on the failure of the ICTM, we propose a prediction-correction based ICTM for (\ref{o111}). From the above formal analysis, one can see the issue on the failure of the method is that the update of $T$ can not give a decrease of the objective functional. However, $T^k$ and $T^{*k}$ are determined by $\chi^k$ and they are usually uniquely solved from the systems \eqref{and} and \eqref{ad} with a given $\chi^k$. 

Therefore, to obtain a minimizing sequence, we propose to generate the following sequence
\begin{equation*}
	\chi^{1}, T^1, T^{*1}, \chi^{2}, T^2, T^{*2},\cdots,\chi^{k}, T^k, T^{*k},\cdots
\end{equation*}
such that $J^\tau(\mathbf{\chi}^{k+1},T^{k+1})\leq J^\tau(\mathbf{\chi}^{k},T^{k})$. That is, we explicitly impose the monotonically decay of the objective functional as a constraint in the iteration. 

In the previous section, we discussed that $\chi^{k+1}$ is updated as follows
\[\chi^{k+1} = \arg\min_{\chi \in \mathcal{B}} \tilde{J}^{\tau,k}(\chi)\]
which is independent of $T^{k+1}$. 

In this section, we propose to solve the following problem instead:
\begin{align*}
\chi^{k+1} = & \arg\min_{\chi \in \mathcal{B}} \tilde{J}^{\tau,k}(\chi) \\
 s.t. & \ J^\tau(\mathbf{\chi}^{k+1},T^{k+1})\leq J^\tau(\mathbf{\chi}^{k},T^{k}).
\end{align*}
Because $T$ and $T^{*}$ are implicitly determined by $\chi$ from systems \eqref{and} and \eqref{ad}, one can not directly determine a $\chi^{k+1}$ to have the decay of the objective functional. We thus consider a prediction-correction based method by firstly predicting a candidate for $\chi^{k+1}$ and then doing a correction to have the decay of the objective functional based on the prediction.

Based on the derivation of the original ICTM and the first order necessary condition of the approximate problem \eqref{o111}, to update $\chi^{k+1}$, we define the following {\bf prediction} set according to $T^k$ and $T^{*k}$:
\begin{align*}
A&:=\big\{p\in \mathcal{N},\Phi^k \leq \sigma, \chi^{k}(p)=0\big\},\\
B&:=\big\{p\in \mathcal{N},\Phi^k > \sigma, \chi^{k}(p)=1\big\},
\end{align*}
where $\mathcal{N}$ is the set of all vertices of the grid. Note that $A$ defines the set that $\chi$ tends to change from $0$ to $1$ and $B$ defines the set that $\chi$  tends to change from $1$ to $0$ in the $(k+1)$-th iteration. Because $\int_\Omega \chi \ d\textbf{x}$ is preserved, the size of $A$ should be equal to the size of $B$ ({\it i.e.}, $|A| = |B|$). 
In Algorithm~\ref{a:MBO},  $\chi^{k+1}$ was updated by 
\begin{align*}
\chi^{k+1} = \chi_A+\chi^k-\chi_B.
\end{align*}

From above numerical and theoretical observation, we observe that this update may not guarantee the decay of the objective functional value. To be consistent with the first order necessary condition, we need to do a correction to find $\tilde A \subset A$ where $\chi$ changes from $0$ to $1$ and $\tilde B \subset B$ where $\chi$ changes from $1$ to $0$ during the $(k+1)$-th iteration, subject to the principle that the objective functional decays in the iteration. Based on the principle of the decay of the objective functional value, we define the following  {\bf correction} set according to  $T^k$, $T^{*k}$, $\sigma$, $A$, and $B$:
\begin{align*}
\tilde A_1&:=\big\{p\in \mathcal{N},\Phi^k \leq \sigma_1, \chi^{k}(p)=0\big\},\\
\tilde A_2&:=\big\{p\in \mathcal{N},\Phi^k \in (\sigma_1,\sigma), \chi^{k}(p)=0\big\},\\
\tilde B_1&:=\big\{p\in \mathcal{N},\Phi^k > \sigma_2, \chi^{k}(p)=1\big\},\\
\tilde B_2&:=\big\{p\in \mathcal{N},\Phi^k \in (\sigma, \sigma_2), \chi^{k}(p)=1\big\},
\end{align*}
where $\sigma_1$ and $\sigma_2$ are determined by the decay of the objective functional value. Then, the $(k+1)$-th iteration is defined by 
\begin{align*}
\chi^{k+1} = \chi_{\tilde A_1}+\chi^k-\chi_{\tilde B_1}.
\end{align*}
A diagram of the process of correction is given in Figure~\ref{fig:process_correction}. 
\begin{figure}
    \centering{
    \includegraphics[width=0.3\linewidth,trim=5cm 13.5cm 5cm 4cm,clip]{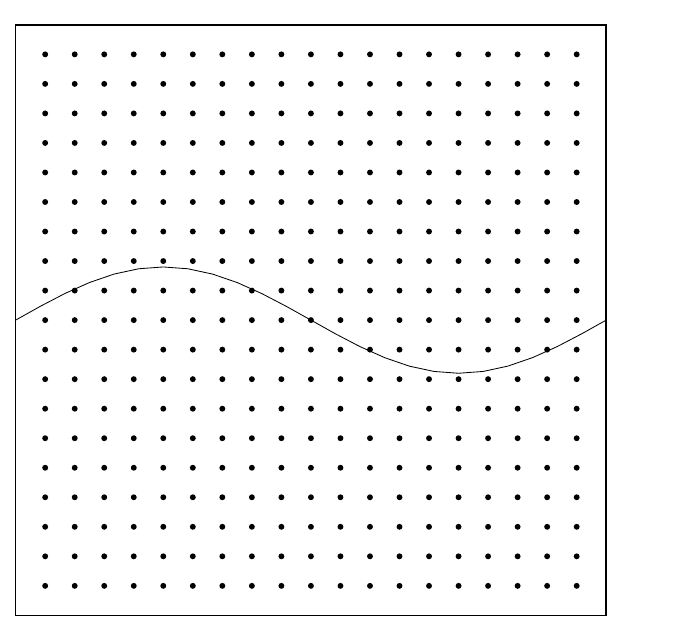}
    \includegraphics[width=0.3\linewidth,trim=5cm 13.5cm 5cm 4cm,clip]{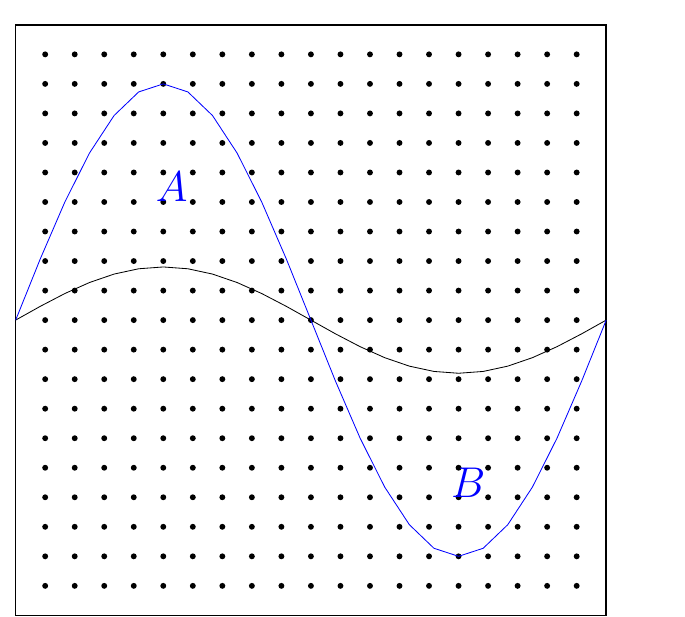}
    \includegraphics[width=0.3\linewidth,trim=5cm 13.5cm 5cm 4cm,clip]{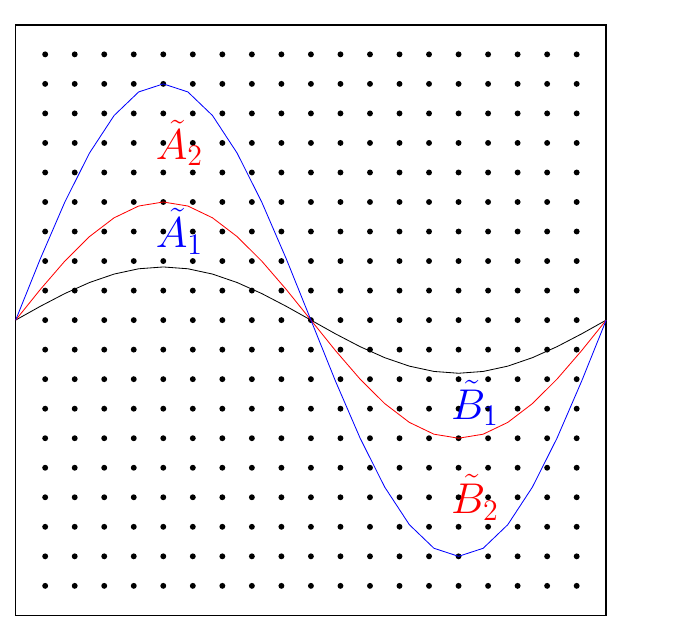}
    \caption{The process of prediction-correction. Left: the $k$-th iteration.  Middle: the prediction using ICTM ({\it i.e.}, sets $A$ and $B$). Right: the correction for the $(k+1)$-th iteration ({\it i.e.}, sets $A_1$ and $B_1$). See Section~\ref{sec:pcICTM}.}
    \label{fig:process_correction}}
\end{figure}

To find $\sigma_1$ and $\sigma_2$ to have the decay of the objective functional, we consider the discrete problem where $\tilde A_2$ and $\tilde B_2$ contain discrete points. Again, because the preservation of $\int_\Omega \chi \ d \textbf{x}$, the size of $\tilde A_2$ should be the same as the size of $\tilde B_2$ in terms of number of discrete points. Hence, the values of $\sigma_1$ and $\sigma_2$ can be implicitly determined by the size of $\tilde A_2$ and $\tilde B_2$. Denote $N$ by the number of discrete points in $A$, we first sort the values of $\phi^k$ in $A$ in a descending order and the values of  $\phi^k$ in $B$ in an ascending order. We propose to iteratively find the proper $\tilde A_2$ and $\tilde B_2$ to have the decay of the objective functional. To be specific, we start to set a small number of points in $\tilde A_2$ and $\tilde B_2$, and gradually increase the number of point until the objective functional value decays. The algorithm is summarized in Algorithm~\ref{a:correction} and the algorithm for the whole prediction-correction based ICTM is then summarized in Algorithm~\ref{a:prediction-correction}.

\begin{algorithm}[h!]
\DontPrintSemicolon
 \KwIn{$\chi^k$, $\tau>0$, $\xi >0$, $\kappa_1,\kappa_2, q_1,q_2$, $T^k$, $T^{*k}$, $A$, $B$, $N$: number of points in $A$, $J^{\tau}(\mathbf{\chi}^{k},T^{k})$, $\Phi^k$, $\theta\in (0,1)$, $s = 1$.}
 \KwOut{$\chi^{k+1} \in \mathcal{B}$.}
%{\bf 1.} Compute 
% \[\Phi^k = (q_1-q_2)G_\tau\ast(T^k-T^{*k})+\gamma\sqrt{\frac{\pi}{\tau}}G_\tau\ast (1-2\chi^k) +(k_1-k_2) G_\tau\ast(\frac{\xi}{2}\nabla T^k\cdot\nabla T^k+\nabla T^k\cdot\nabla T^{\ast k}) \]
% and 
% \begin{align*}
%J^{\tau}(\mathbf{\chi}^{k},T^{k}) = &\int_\Omega(q(\chi^{k})T^{k})\ d\textbf{x} + \frac{\xi}{2}\int_\Omega\kappa(\chi^{k})\nabla T^{k}\cdot \nabla T^{k} \ d\textbf{x}+\gamma\sqrt{\frac{\pi}{\tau}}\int_\Omega \chi^{k} G_\tau\ast (1-\chi^{k})\ d\textbf{x}.
%\end{align*}
%{\bf 2.} Find a proper $\sigma$ such that 
%\begin{align*}
%A&:=\big\{p\in \mathcal{N},\Phi^k \leq \sigma, \chi^{k}(p)=0\big\},\\
%B&:=\big\{p\in \mathcal{N},\Phi^k > \sigma, \chi^{k}(p)=1\big\}.
%\end{align*}
\If{$J^{\tau}(\tilde \chi_s,\tilde T_s)>J^{\tau}(\chi^{k},T^{k})$ }{
{\bf 1.} Sort the values of $\phi^k$ in $A$ in a descending order (denoted by $\tilde \phi_A^k$) and the values of  $\phi^k$ in $B$ in an ascending order (denoted by $\tilde \phi_B^k$),  \\
{\bf 2.} Compute $n_s = N-\lfloor N*\theta^s\rfloor$, \\
{\bf 3.} Set $\sigma_1 = \tilde \phi_A^k(n_s)$ and $\sigma_2 =  \tilde \phi_B^k(n_s)$, \\
{\bf 4.} Set	 \[A_1 = \{p\in A: \phi_A(p)\leq \sigma_1\} \ \ \ \ B_1 = \{p\in B: \phi_B(p)\leq \sigma_2\}\]
and set $\tilde \chi_s = \chi_{A_1}+\chi^k-\chi_{B_1}$,\\
{\bf 5.} Calculate $\tilde T_s$ based on $\tilde \chi_s$ and compute $J^{\tau}(\tilde \chi_s,\tilde T_s)$,\\
{\bf 6.} Set $s = s+1$.}
Set $\chi^{k+1} = \tilde \chi_s$.
\caption{The correction step in the ICTM. }
\label{a:correction}
\end{algorithm}

\begin{algorithm}[h!]
\DontPrintSemicolon
 \KwIn{$\chi^0$, $\tau>0$, $\xi >0$, $\kappa_1,\kappa_2, q_1,q_2$, $\theta\in (0,1)$, $k = 0$, the tolerance $tol>0$.}
 \KwOut{$\chi^{\ast} \in \mathcal{B}$ that approximates the optimal solution.}
 \While{$\|\chi^{k+1}-\chi^k\|_2>tol$}{
{\bf 1.} Prediction.\\
{\bf 1.1} Compute 
 \[\Phi^k = (q_1-q_2)G_\tau\ast(T^k-T^{*k})+\gamma\sqrt{\frac{\pi}{\tau}}G_\tau\ast (1-2\chi^k) +(k_1-k_2) G_\tau\ast(\frac{\xi}{2}\nabla T^k\cdot\nabla T^k+\nabla T^k\cdot\nabla T^{\ast k}) \]
 and 
 \begin{align*}
J^{\tau}(\mathbf{\chi}^{k},T^{k}) = &\int_\Omega(q(\chi^{k})T^{k})\ d\textbf{x} + \frac{\xi}{2}\int_\Omega\kappa(\chi^{k})\nabla T^{k}\cdot \nabla T^{k} \ d\textbf{x}+\gamma\sqrt{\frac{\pi}{\tau}}\int_\Omega \chi^{k} G_\tau\ast (1-\chi^{k})\ d\textbf{x}.
\end{align*}
{\bf 1.2} Find a proper $\sigma$ such that 
\begin{align*}
A&:=\big\{p\in \mathcal{N},\Phi^k \leq \sigma, \chi^{k}(p)=0\big\},\\
B&:=\big\{p\in \mathcal{N},\Phi^k > \sigma, \chi^{k}(p)=1\big\}.
\end{align*}
{\bf 1.3} Compute $N$, the number of discrete points in $A$.\\
{\bf 2.} Correction.\\
Run the correction step defined in Algorithm~\ref{a:correction}.
 }
\caption{The prediction-correction-based ICTM approximating optimizers of \eqref{o111}. }
\label{a:prediction-correction}
\end{algorithm}

\begin{remark}
Note that in this algorithm, the worst case in some iteration is that one can not find two sets $A_1$ and $B_1$ to have an objective functional decay. This indeed usually indicates the convergence of the algorithm numerically. With the exception of this case, it is generally possible to find a new update that minimizes the objective function subject to the constraints. This novel prediction-correction based ICTM should also work for general optimal design problems with multi-physics constraints. In this paper, we firstly focus on the steady heat-transfer optimization problem to introduce the idea of the prediction-correction-based ICTM and extend it to more complicated problems elsewhere.
\end{remark}

\begin{remark}
Another intuition of the prediction-correction based ICTM is to consider the proximal map or implicit update of the next step. In the continuous level of the original ICTM, the variation of the objective functional with respect to the indicator function is correct. However, one then simply changes the values from 1 to 0 when the variation is less than 0, and vice versa. This can be interpreted as a gradient descent method with a sufficiently large step together with projection. That is, with a sufficiently large step size $s$
\begin{equation}
\chi^{k+1} = Proj_{[0,1]}  \left(\chi^{k}- s \left. \frac{\delta J^\tau}{ \delta \chi}\right|_{\chi^k}\right),
\end{equation}
where 
\begin{equation*}
Proj_{[0,1]}(v) =
\begin{cases}
v  & {\rm if} \ v\in [0,1], \\
0 & {\rm if} \ v<0, \\
1 & {\rm if} \ v>1.
\end{cases}
\end{equation*} 
Obviously, for a sufficiently large step size $s$, this is equivalent to the original ICTM. This is an explicit way to find the approximate solution to minimize the objective functional and this only works when the objective functional has some properties (for example, concavity or unimodality) with respect to the indicator function. Thus, based on the gradient information, one can interpret the prediction-correction based ICTM is an implicit way to update the next step by penalizing the change between two steps.
\end{remark}

\section{Numerical experiments}
\label{sec:numerical}

In this section, we first show the implementation of Algorithm~\ref{a:prediction-correction}. We use the continuous piecewise linear finite element space to approximate the problems (\ref{and}) and (\ref{ad}). Let $\mathcal{T}_h$ be a uniform triangulation of the domain $\Omega$, and $\mathcal{N}_h$ is the set of all vertices of $\mathcal{T}_h$. For a given $\chi_h \in \mathcal{B}_h$ where $\mathcal{B}_h$ is the discrete version of $\mathcal{B}$ defined on $\mathcal{N}_h$. We use the fast Fourier transform (FFT) for the computation of convolutions.

We introduce the continuous piecewise linear finite element spaces as follows:
\begin{align*}
	V_h = \{v\in H^1(\Omega) \ | \ v \in P_1(K), \forall K\in \mathcal{T}_h \},\quad
	V_h^{0} = V_h\cap H^1_{\Gamma_D}(\Omega),
\end{align*}
where $P_1(K)$ is the linear polynomial on $K$ and $H^1_{\Gamma_D}(\Omega) = \{ v\in H^1(\Omega) \ | \ v |_{\Gamma_D}=0 \}$. For the solution of $(\ref{and})$ and $(\ref{ad})$, we find $T_h\in V_h^0$ and $T^\ast_h\in V_h^0$ such that
\begin{align*}
(\kappa(\chi_h)\nabla T_h,\nabla \varphi_h) &= (q(\chi_h),\varphi_h),\ \ \forall \varphi_h\in V_h^0,\\
 -(\kappa(\chi_h)\nabla T^\ast_h,\nabla \varphi_h) &= (q(\chi_h),\varphi_h) + \xi(\kappa(\chi_h)\nabla T_h,\nabla \varphi_h),\ \ \forall \varphi_h\in V_h^0.
\end{align*}
When $T_h$ and $T^\ast_h$ are obtained, we use the FFT to evaluate $\Phi_h$ on each node of $\mathcal{N}_h$ as follows:
 \begin{equation*}				
 \Phi_h=(q_1-q_2)G_\tau\ast(T_h -T^\ast_h)+\gamma\sqrt{\frac{\pi}{\tau}}G_\tau\ast (1-2\chi_{h}) +(\kappa_1-\kappa_2)G_\tau\ast(\frac{\xi}{2}\nabla T_h\cdot\nabla T_h+\nabla T_h\cdot\nabla T_h^{\ast }).
\end{equation*}
Next we can update the indicator function $\mathbf{\chi}_h$ using $\Phi_h$ as the approach stated in Algorithm~\ref{a:prediction-correction}.

In the remainder of this section, we test the following three examples to demonstrate the performance of the proposed method:
\begin{itemize}
\item[1.] An area-to-point problem as considered in previous works \cite{boichot2016genetic,ikonen2018topology,ullah2021parametrized} where the ``point" indicates a short edge on the boundary. The Dirichlet boundary condition is imposed on a short edge of the left-hand side of the boundary while Neumann boundary conditions are imposed on the remaining boundary, as depicted in Figure~\ref{m1}. 
\item[2.] An area-to-sides problem with only Dirichlet boundary as in \cite{guo2016optimization}. 
\item[3.] A three dimensional case as in \cite{burger2013three} which is a cubic domain for the volume-to-surface problem as shown in Figure~\ref{fig:model3}. The Dirichlet boundary is located at a small surface on the middle of bottom face while the Neumann boundary condition is imposed on the other part of the boundary.
\end{itemize}

 In order to do comparisons among optimal distributions with different parameters $\kappa_1,\ \kappa_2,\ q_1,\ q_2,\ \tau,\ \gamma,\ \xi$, we use non-dimensional thermal conductivity and heat generation. In each iteration of the correction step, we choose half of the points of the remaining prediction sets that transfer from $\Omega_1$ to $\Omega_2$ or from $\Omega_2$ to $\Omega_1$ during the iteration in the correction step ({\it i.e.}, $\theta = 1/2$).

\subsection{Area-to-point problem}\label{Area-to-point_ex}
In this example as shown in Figure~\ref{m1}, the length of side of the square domain is $l=1$, and the Dirichlet boundary is located in the middle of the left boundary with $d = 0.1 l$. The domain, enclosed by adiabatic boundaries, allows heat conduction only through the Dirichlet boundary. The heat generation rate is different in $\Omega_1$ and $\Omega_2$. In order to verify the effectiveness of Algorithm~\ref{a:prediction-correction}, we test the initial distribution of $\chi$ shown in Figure~\ref{X1} with Algorithm~\ref{a:MBO} (original ICTM) and Algorithm~\ref{a:prediction-correction} (prediction-correction based ICTM) individually. Furthermore, we investigate the influences of $\frac{\kappa_1}{\kappa_2}$, $\frac{q_1}{q_2}$, mesh size, volume fraction $\beta$, $\tau$, $\gamma$, and the choice of initial distribution of $\chi$. The regularization parameter $\xi$ is set to be $1\times10^{-5}$ in all cases for this example.

\begin{figure}[ht!]
   \centering
   \includegraphics[width=0.3\linewidth,trim=6cm 10cm 5cm 10cm,clip]{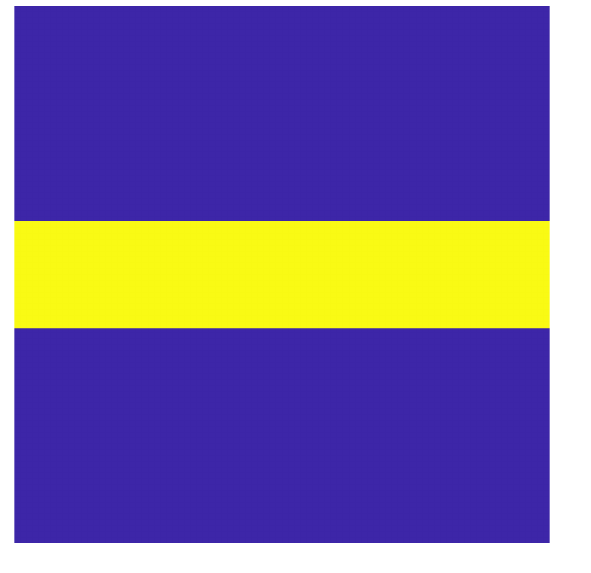}
   \caption{The initial distribution of $\chi_1.$ See Section~\ref{Area-to-point_ex}.}
   \label{X1}
\end{figure}

\subsubsection{Comparison between the classical ICTM and the prediction-correction based ICTM}\label{sec:comparison}
 We first compare Algorithm~\ref{a:MBO} and Algorithm~\ref{a:prediction-correction} by the choices of $\gamma=50$ and $30$ to show the performance of two algorithms. We set $\kappa_1=10, \ \kappa_2=1, \ q_1=1, \ q_2=100, \ \tau=1\times10^{-4}$, and the volume fraction $\beta=0.2$ on a $600\times600$ grid. For the initial distribution of $\chi$ in Figure~\ref{X1}, we get the optimal distribution of $\chi$ and objective functional curves as shown in Figures~\ref{Example1_fail}, \ref{Example1.11}, \ref{Example1.1}, and \ref{Example1.3}. The yellow region indicates the high conductivity subdomain with low heat generation rate, and the blue region is the low conductivity subdomain with high heat generation rate. The objective functional $J(\chi,T)$ and $J^\tau(\chi,T)$ are defined in (\ref{o1}) and (\ref{o12}) respectively, where the $|\Gamma|$ in $J(\chi,T)$ is also approximated by the Gaussian convolution as that in $J^\tau(\chi,T)$. The difference between $J(\chi,T)$ and $J^\tau(\chi,T)$ is only the term $\frac{\xi}{2}\int_{\Omega} \kappa(\chi)\nabla T \cdot \nabla T \ d\textbf{x}$.
 
 For the case of $\gamma=50$, the optimal distributions of $\chi$ shown in Figure~\ref{Example1.11} and Figure~\ref{Example1.1} are similar, but the objective functional is not always stable by Algorithm~\ref{a:MBO}, while the objective functional value strictly decays by Algorithm~\ref{a:prediction-correction}. 
 
For the case of $\gamma=30$, from Figure~\ref{Example1_fail}, we can see that the objective functional always oscillates during the iteration using Algorithm~\ref{a:MBO} and the distribution of $\chi$ can not converge. However, if we apply Algorithm \ref{a:prediction-correction}, the optimal distribution of $\chi$ can be obtained and the objective functional stably decays as shown in Figure~\ref{Example1.3}. From Figures~\ref{Example1.1} and \ref{Example1.3}, we observe that the objective functional $J(\chi,T)$ consistently yields smaller values compared to the objective functional $J^\tau(\chi,T)$, indicating that $J(\chi,T)$ possesses the decaying property observed in $J^\tau(\chi,T)$.

\begin{figure}[ht!]
    \centering
	\includegraphics[width=0.3\linewidth,trim=6cm 10cm 5cm 10cm,clip]{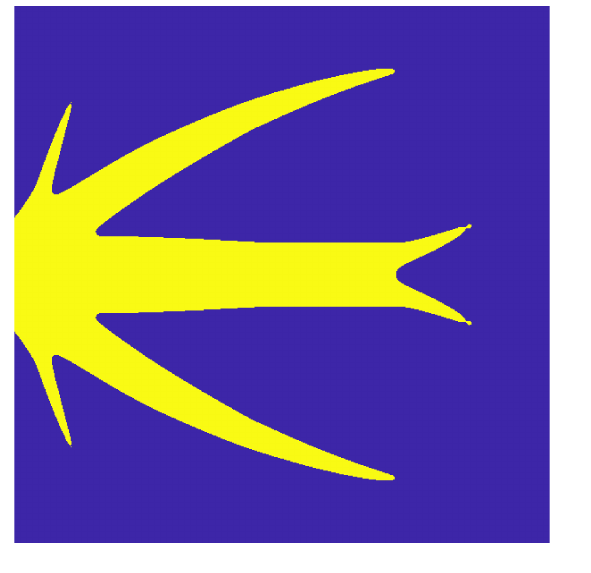}
	\includegraphics[width=0.4\linewidth,trim=2cm 8.5cm 3cm 10cm,clip]{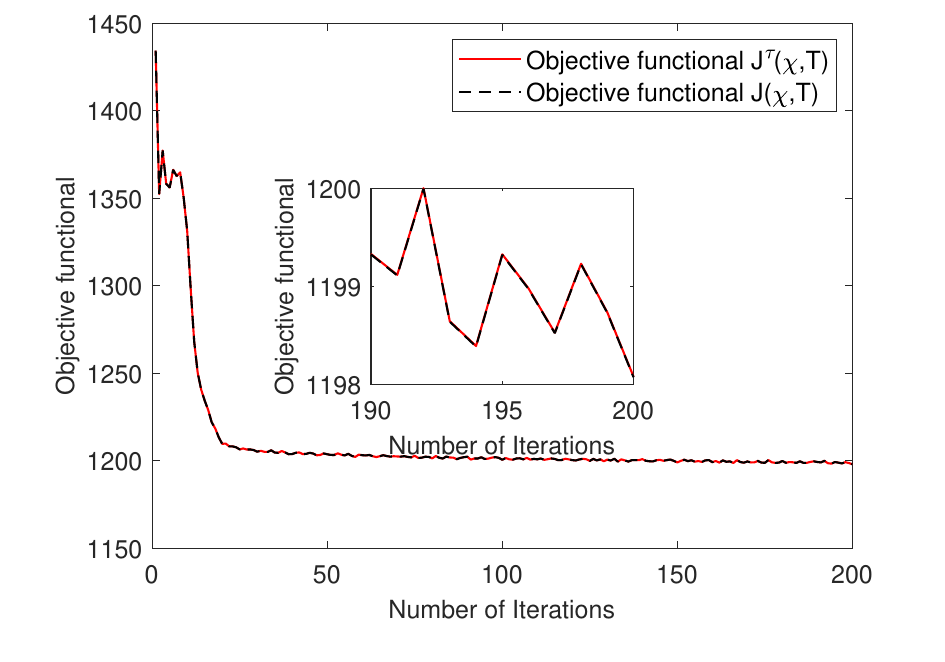}
	\caption{ The optimal results with $\gamma = 50$, $\kappa_1=10,\ \kappa_2=1,\ q_1=1,\ q_2=100$, volume fraction $\beta = 0.2$ on a $600\times 600$ grid using ICTM. Left: Approximate optimal solution. Right: Objective functional curves. See Section~\ref{sec:comparison}.}
	\label{Example1.11}
\end{figure}

\begin{figure}[ht!]
	\centering
	\includegraphics[width=0.3\linewidth,trim=6cm 10cm 5cm 10cm,clip]{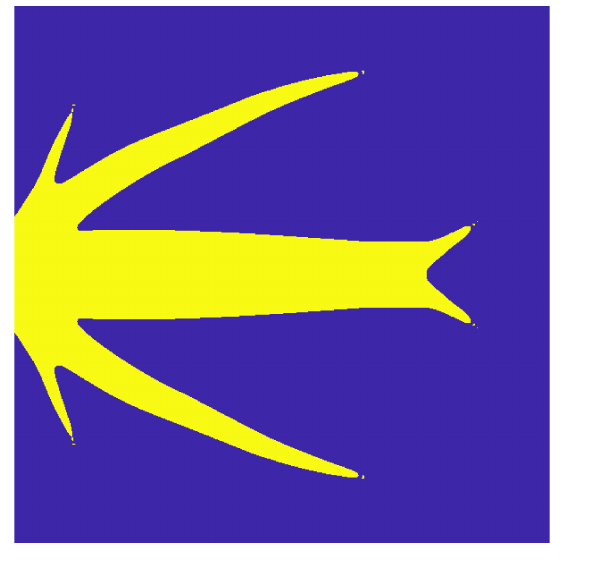}
	\includegraphics[width=0.4\linewidth,trim=2.5cm 8.5cm 3cm 10cm,clip]{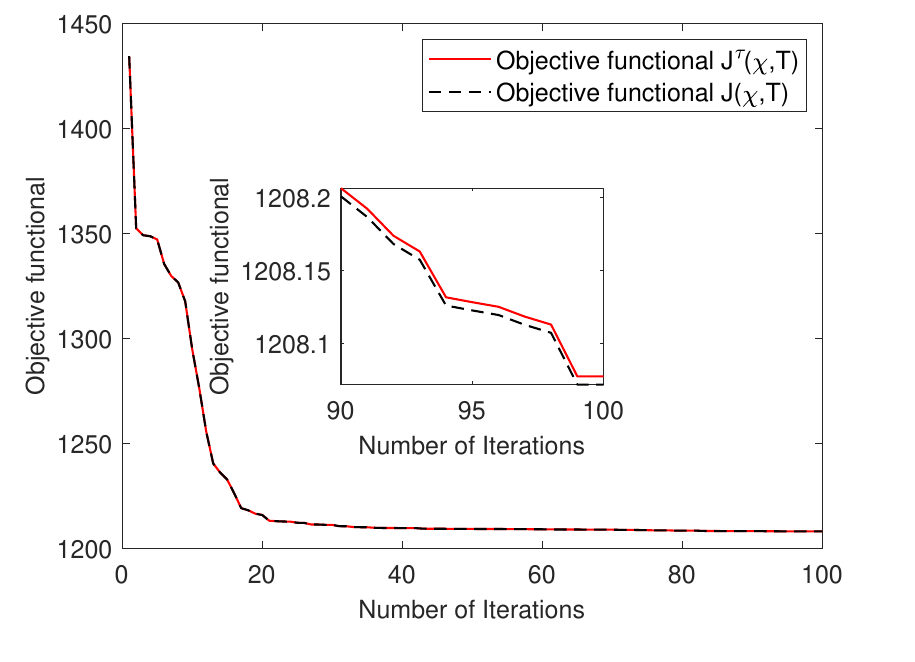}
	\caption{The optimal results with $\gamma = 50$, $\kappa_1=10,\ \kappa_2=1,\ q_1=1,\ q_2=100$, volume fraction $\beta = 0.2$ on a $600\times 600$ grid using the prediction-correction-based ICTM. Left: Approximate optimal solution. Right: Objective functional curves. See Section~\ref{sec:comparison}.}
	\label{Example1.1}
\end{figure}

\begin{figure}[ht!]
	\centering
    \includegraphics[width=0.3\linewidth,trim=6cm 10cm 5cm 10cm,clip]{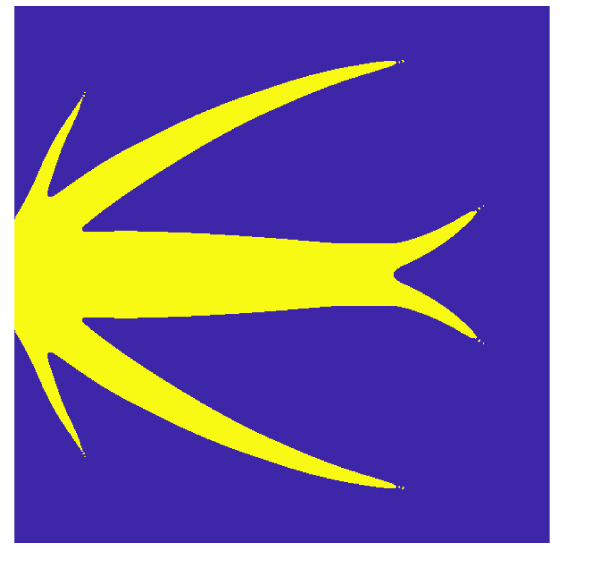}
    \includegraphics[width=0.4\linewidth,trim=2.5cm 8.5cm 3cm 10cm,clip]{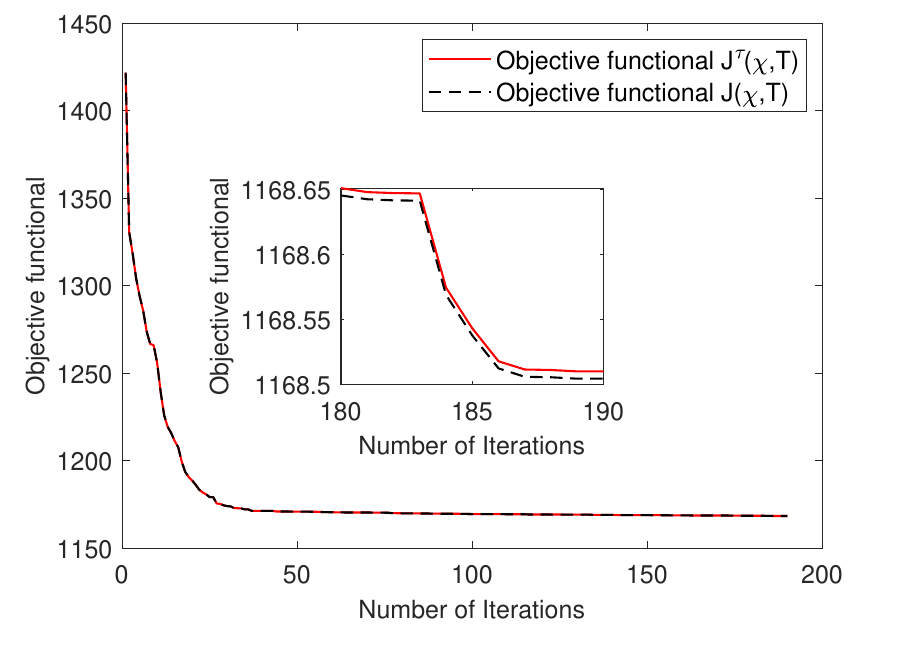}
	\caption{ The optimal results with $\gamma = 30$, $\kappa_1=10,\ \kappa_2=1,\ q_1=1,\ q_2=100$, volume fraction $\beta = 0.2$ on a $600\times 600$ grid using the prediction-correction-based ICTM. Left: Approximate optimal solution. Right: Objective functional curves. See Section~\ref{sec:comparison}.}
	\label{Example1.3}
\end{figure}

\subsubsection{The evolution of the profile during iterations}\label{sec:evolution}
Next, we investigate the change of $\chi$ together with the objective functional decay, with $\kappa_1=10,\ \kappa_2=1$, $q_1=1,\ q_2=100$, $\gamma = 15$, $\tau = 1\times10^{-4}$, and volume fraction $\beta = 0.2$ on a $600\times 600$ grid. The change of $\chi$ are displayed in Figure~\ref{E_tol} during the iteration. Figure~\ref{E_tol} demonstrates the decay of both the objective functional $J(\chi,T)$ and $J^\tau(\chi,T)$ throughout the iteration process. One can see that the algorithm converges fast in about 100-200 steps to obtain a stationary solution and the profiles at the 100-th and 200-th iterations are quite similar. 

\begin{figure}[ht!]
   \centering
   \includegraphics[width=0.9\linewidth]{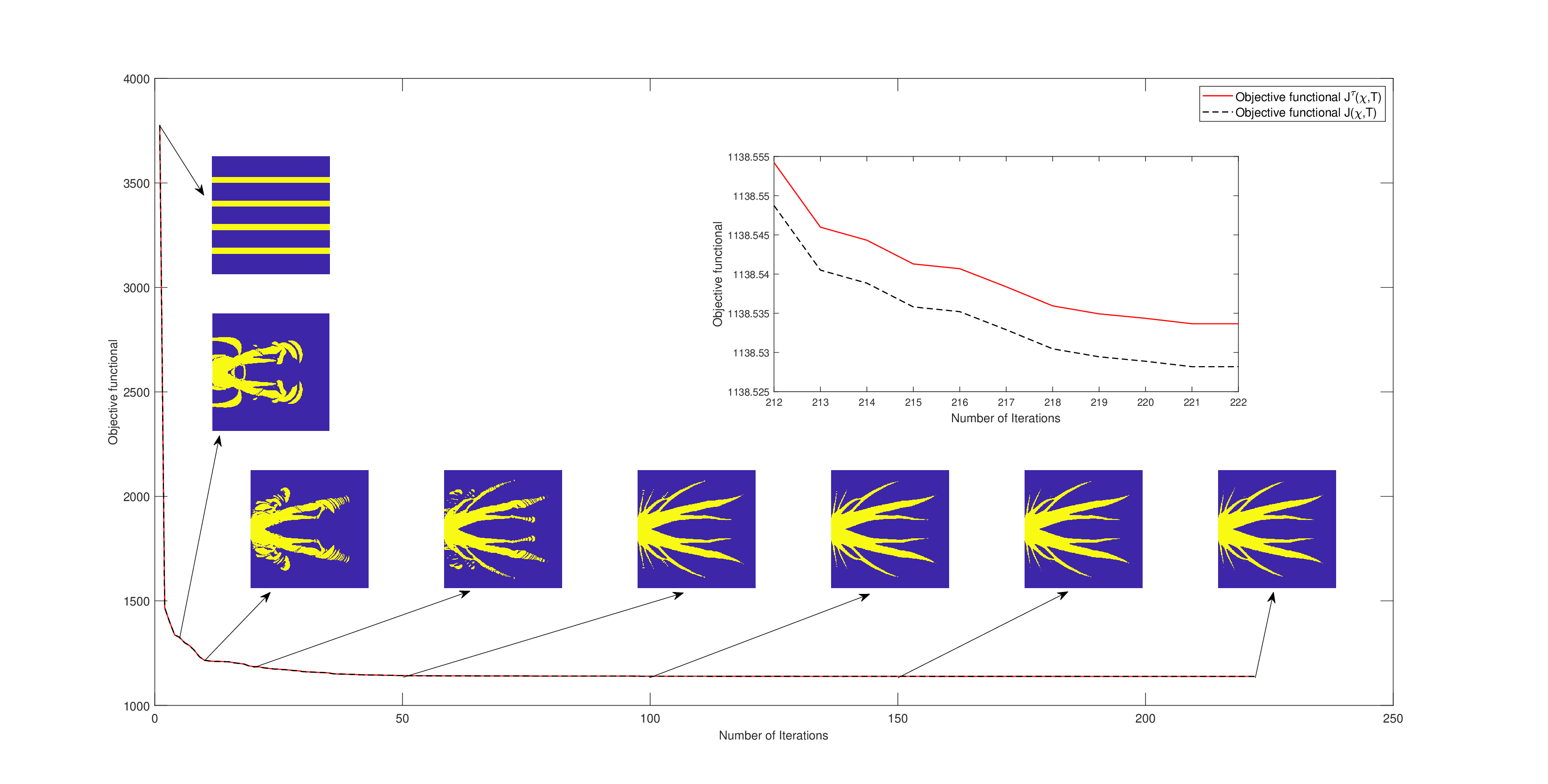}
   \caption{Change of $\chi$ and objective functional value during iterations. See Section~\ref{sec:evolution}.}
   \label{E_tol}
\end{figure}

\subsubsection{Profile dependency on parameters}\label{sec:dependonpara}
We further investigate the dependence of the approximate optimal solutions of $\chi$ on $\frac{\kappa_1}{\kappa_2}$, $\frac{q_1}{q_2}$, mesh size, volume fraction $\beta$, $\tau$, $\gamma$ and the initial distribution. 
% \begin{figure}[ht!]
%    \centering
%    \includegraphics[width=0.3\linewidth,trim=6cm 10cm 5cm 10cm,clip]{}
%    \caption{The initial distribution of $\chi_1.$ See Section~\ref{sec:dependonpara}.}
%    \label{X4}
% \end{figure}

{\bf Dependency on $\kappa_1/\kappa_2$:} In order to investigate the dependence of $\kappa_1/\kappa_2$ for the approximate optimal solution of $\chi$ with the same initial guess as that in Figure~\ref{E_tol}, we fix $q_1 = 1$, $q_2 = 100$, $\gamma = 15$, $\beta = 0.2$, $\tau = 10^{-4}$ and change $\kappa_1/\kappa_2$ from $5$, $10$ to $20$. We observe from Figure~\ref{Example8.2} that the solution contains finer branches for larger $\frac{\kappa_1}{\kappa_2}$, which is consistent with the results reported in \cite{boichot2016genetic}. 
\begin{figure}[ht!]
      \centering
	 \includegraphics[width=0.28\linewidth,trim=4cm 10cm 3cm 10cm,clip]{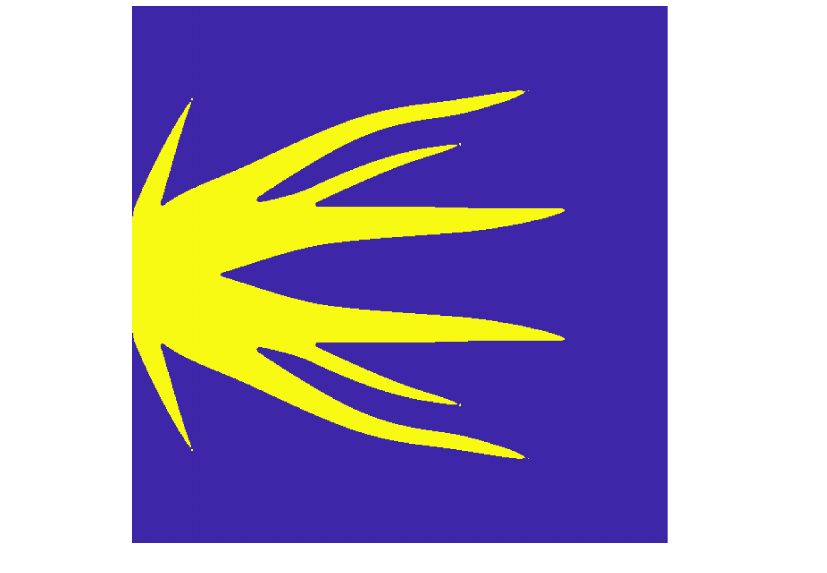}
     \includegraphics[width=0.28\linewidth,trim=4cm 10cm 3cm 10cm,clip]{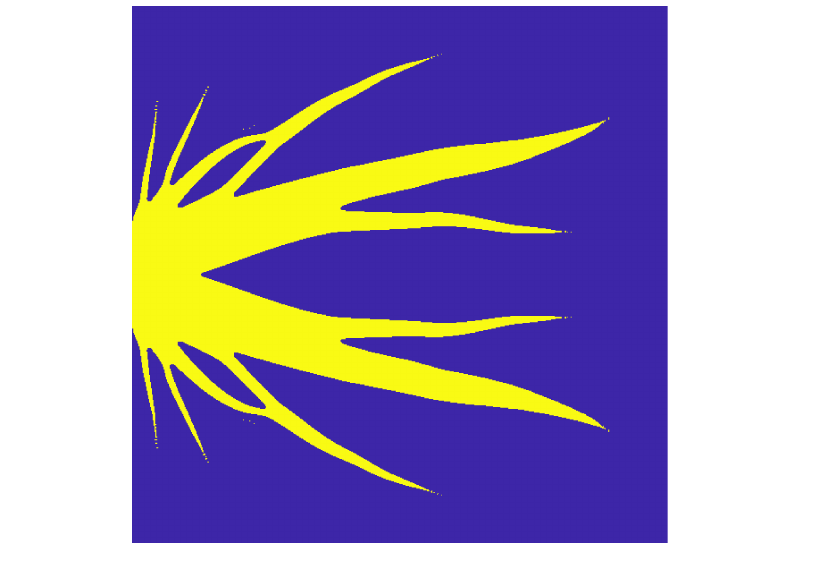}
	 \includegraphics[width=0.28\linewidth,trim=4cm 10cm 3cm 10cm,clip]{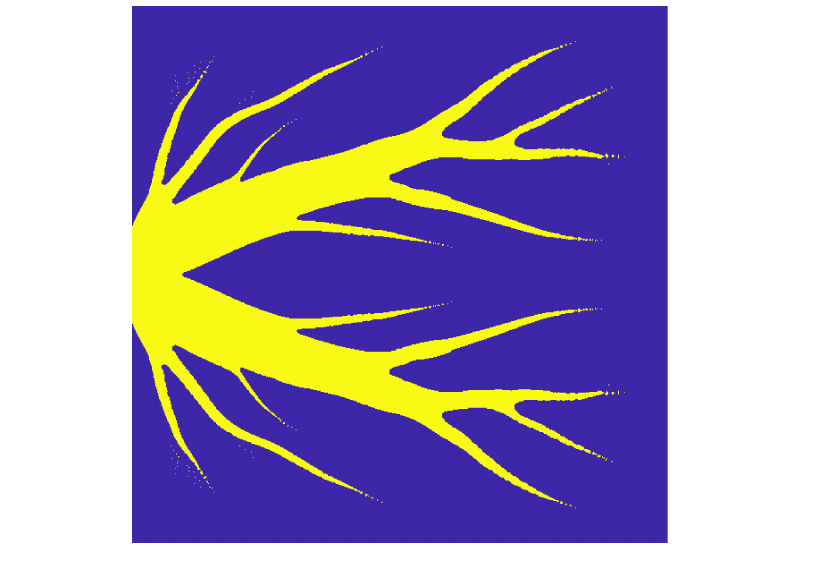}\\
     \includegraphics[width=0.28\linewidth,trim=2.5cm 9cm 3cm 10cm,clip]{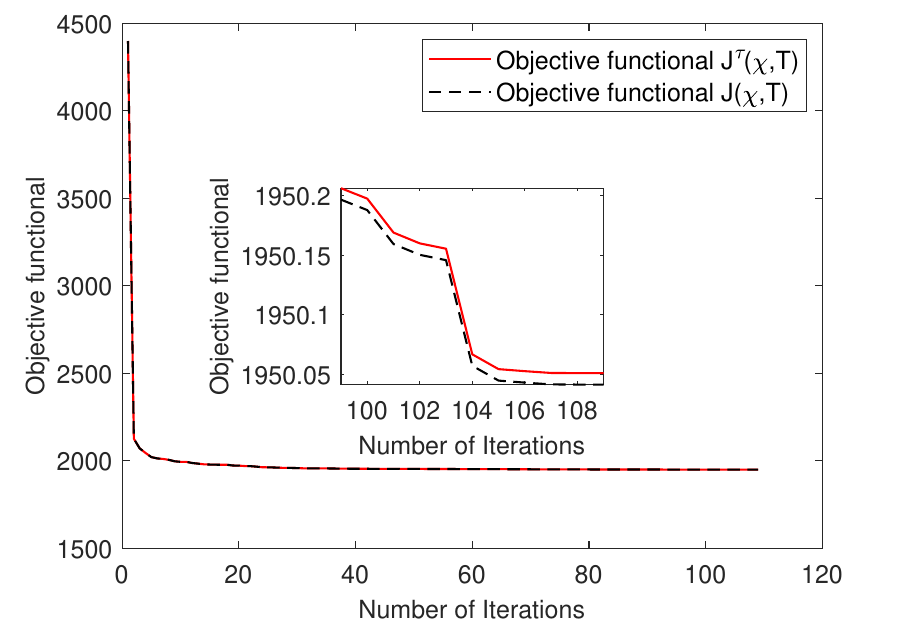}
     \includegraphics[width=0.28\linewidth,trim=2.5cm 9cm 3cm 10cm,clip]{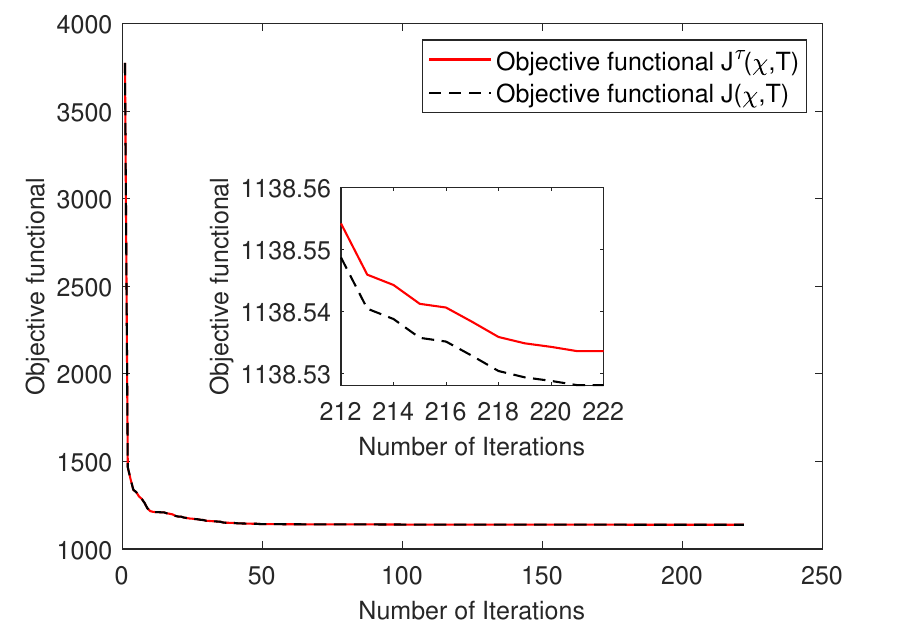}
	 \includegraphics[width=0.28\linewidth,trim=2.5cm 9cm 3cm 10cm,clip]{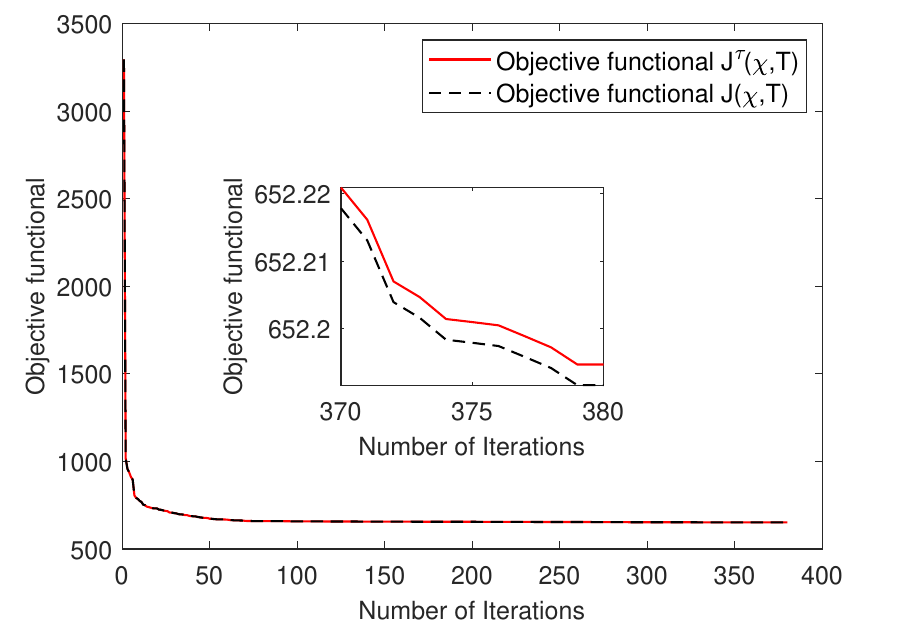}
	\caption{Comparing the impact of $\kappa_1/\kappa_2$ on the optimal distribution of $\chi$ and energy on a $600\times 600$ grid with $q_1=1,\ q_2=100$, $\gamma = 15$, volume fraction $\beta=0.2$ $\tau = 1\times 10^{-4}$. From left to right: the approximate optimal solution and objective functional curves for $\kappa_1 = 5,\ 10,\ 20$ and $\kappa_2=1$. See Section~\ref{sec:dependonpara}.}
	\label{Example8.2}
\end{figure}

{\bf Dependency on $q_1/q_2$:} Similarly, for the dependence of $q_1/q_2$ on the optimal distribution of $\chi$, we fix $\kappa_1=10,\ \kappa_2=1$, $\gamma = 15$, volume fraction $\beta = 0.2$, $\tau = 1\times10^{-4}$ on a mesh size $600 \times 600$ and change $q_1/q_2$  from $40$, $80$, to $100$. Different from the ratio $k_1/k_2$, one can observe that $\chi$ contains finer branches with a smaller ratio of $q_1/q_2$, as shown in Figure~\ref{Example8.1}.

\begin{figure}[ht!]
	\centering
        \includegraphics[width=0.28\linewidth,trim=4cm 10cm 3cm 10cm,clip]{fig/2D/model1/k/kX10.pdf}
	\includegraphics[width=0.28\linewidth,trim=4cm 10cm 3cm 10cm,clip]{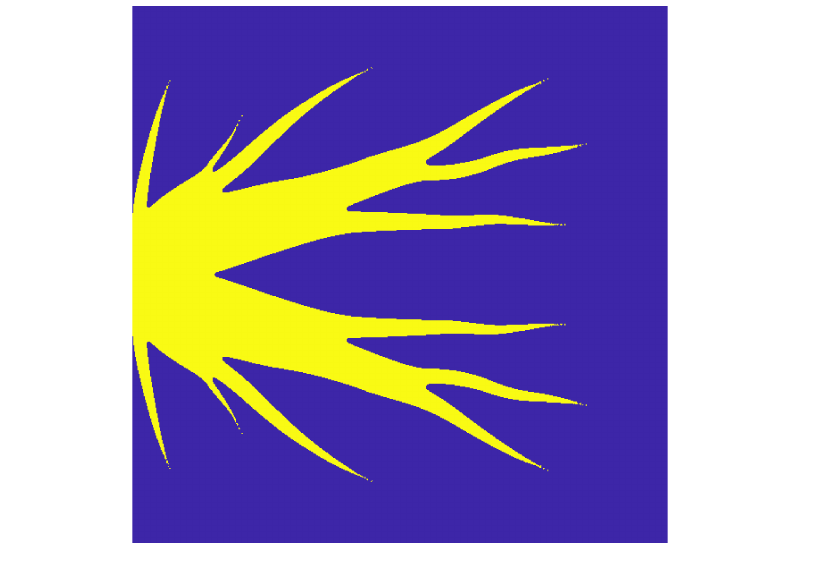}
	\includegraphics[width=0.28\linewidth,trim=4cm 10cm 3cm 10cm,clip]{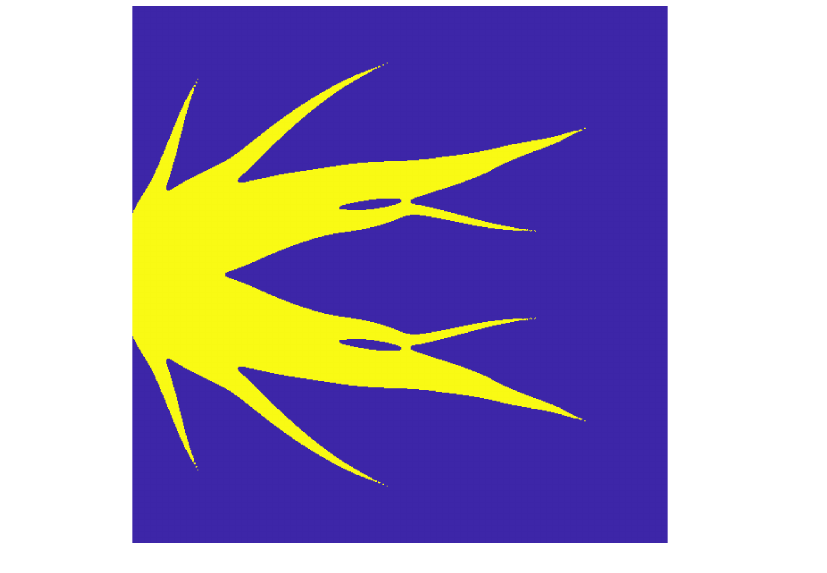}\\
        \includegraphics[width=0.28\linewidth,trim=2.5cm 9cm 3cm 10cm,clip]{fig/2D/model1/k/kE10.pdf}
        \includegraphics[width=0.28\linewidth,trim=2.5cm 9cm 3cm 10cm,clip]{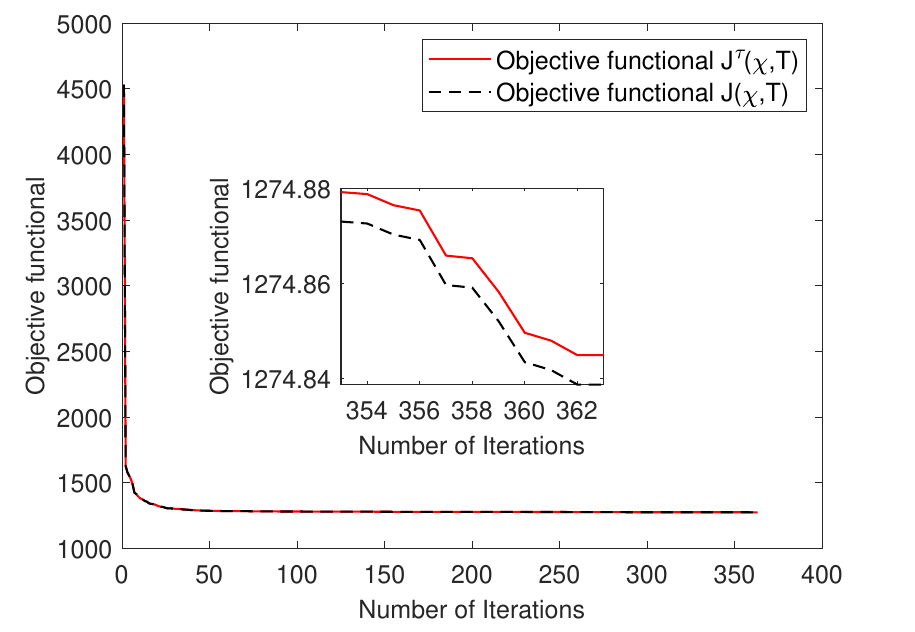}
	\includegraphics[width=0.28\linewidth,trim=2.5cm 9cm 3cm 10cm,clip]{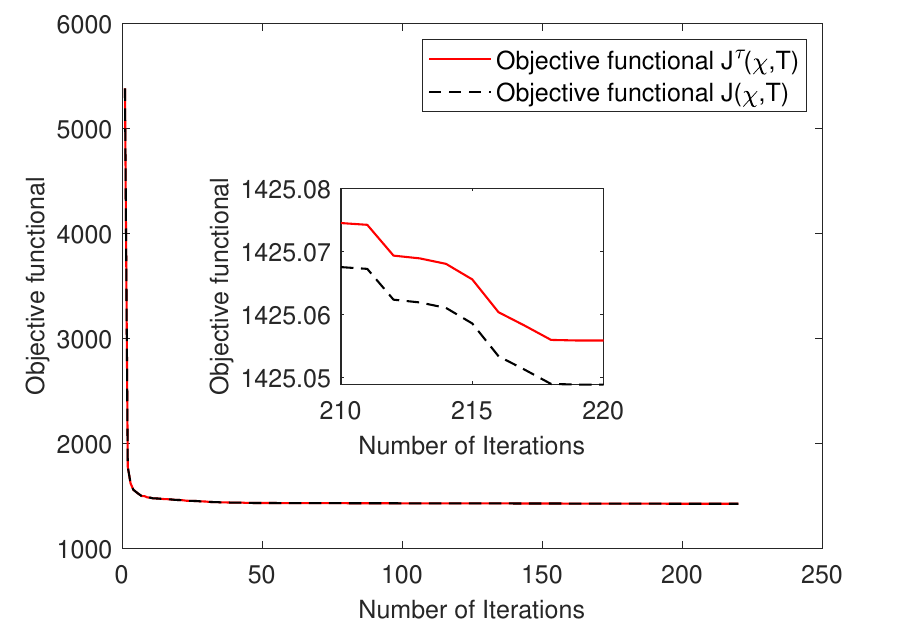}
	 \caption{Comparing the impact of $q_1/q_2$ on the approximate optimal solutions and objective functional values on a $600\times 600$ grid with $\kappa_1=10,\ \kappa_2=1$, $\gamma = 15$, volume fraction $\beta = 0.2$ and $\tau = 1\times10^{-4}$. Left to right: Approximate solutions and objective functional value curves for for $q_1=1,\ 40,\ 80$, and $q_2=100$. See Section~\ref{sec:dependonpara}.}
	\label{Example8.1}
\end{figure}

% Besides the ratios of $\frac{\kappa_1}{\kappa_2}$ and $\frac{q_1}{q_2}$, the volume fraction $\beta$, $\gamma$, $\tau$, mesh size and the initial guess also have an effect on the optimal distribution of $\chi$ and the objective functional evolution respectively. 

{\bf Dependency on mesh size:} As for the impact on the mesh size, Figure~\ref{mesh} displays the solution and the objective functional decaying property on the different grids. We observe that the solution remains stable when the mesh is refined and the objective functional decays with a similar profile. 

\begin{figure}[ht!]
	\centering
        \includegraphics[width=0.28\linewidth,trim=4cm 10cm 3cm 10cm,clip]{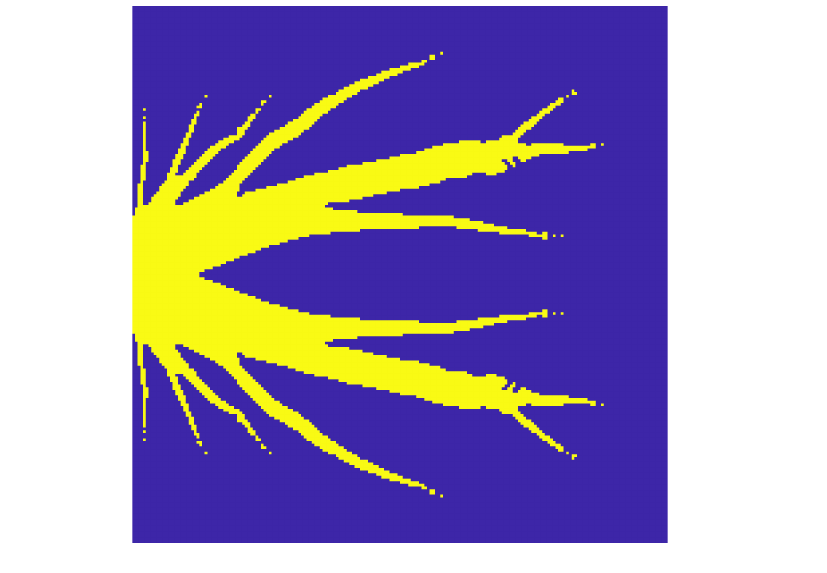}
	\includegraphics[width=0.28\linewidth,trim=4cm 10cm 3cm 10cm,clip]{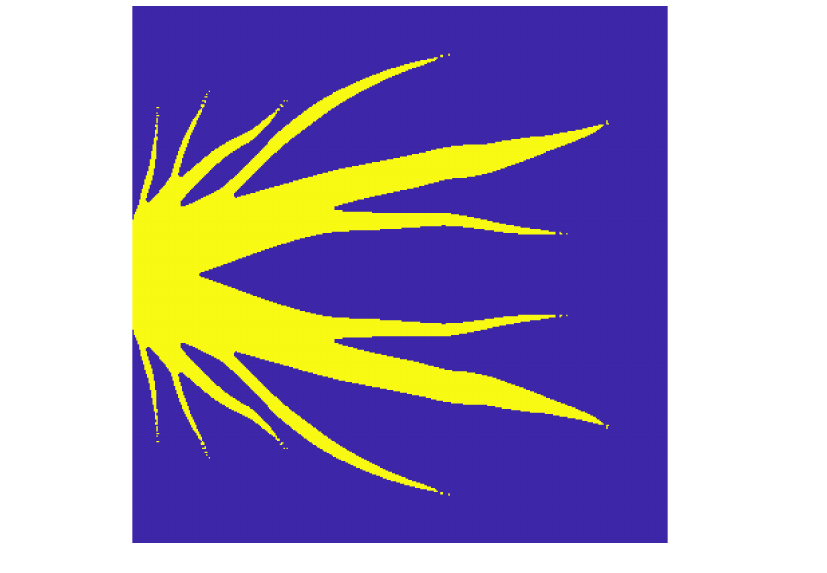}
         \includegraphics[width=0.28\linewidth,trim=4cm 10cm 3cm 10cm,clip]{fig/2D/model1/k/kX10.pdf}\\
        \includegraphics[width=0.28\linewidth,trim=2.5cm 9cm 3cm 10cm,clip]{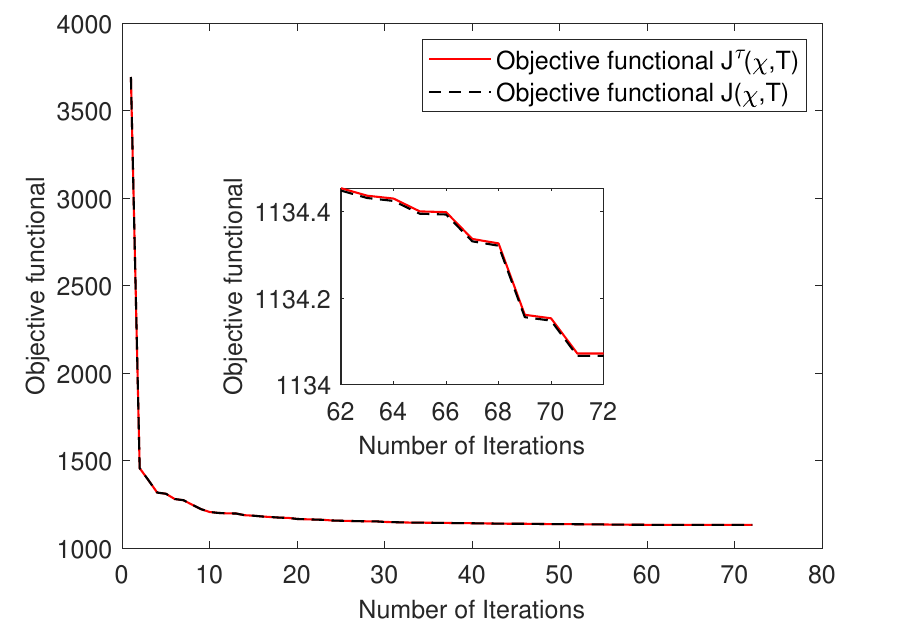}
        \includegraphics[width=0.28\linewidth,trim=2.5cm 9cm 3cm 10cm,clip]{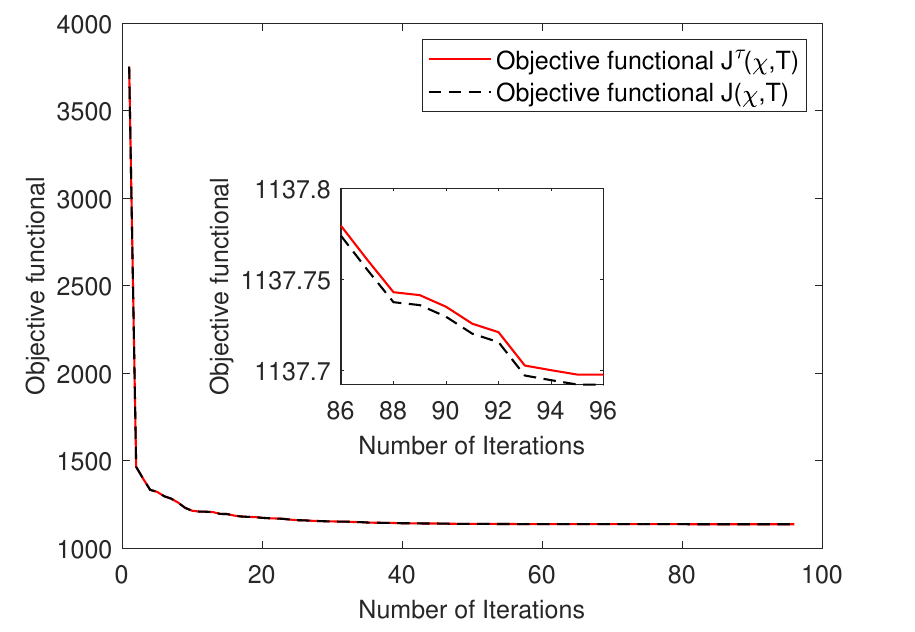}
	\includegraphics[width=0.28\linewidth,trim=2.5cm 9cm 3cm 10cm,clip]{fig/2D/model1/k/kE10.pdf}
	\caption{Comparing the impact of mesh size on the approximate solutions and objective functional values with $\kappa_1=10,\ \kappa_2=1$, $\gamma = 15$, $q_1=1,\ q_2=100$, volume fraction $\beta = 0.2$, $\tau = 1\times 10^{-4}$. Left to right: approximate solutions and objective functional values on the grids $200\times 200$, $400\times 400$, $600\times 600$. See Section~\ref{sec:dependonpara}.}
	\label{mesh}
\end{figure}

{\bf Dependency on volume fraction:} For the impact on volume fraction $\beta$, as shown in Figure~\ref{volume}, we see that the approximate solution becomes ``fat'' as volume increases but with the similar profiles.

\begin{figure}[ht!]
	\centering
        \includegraphics[width=0.28\linewidth,trim=4cm 10cm 3cm 10cm,clip]{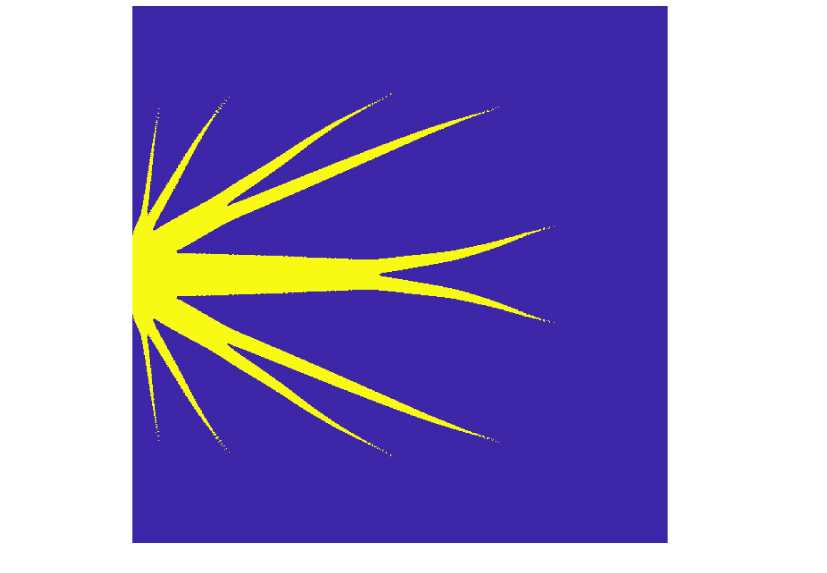}
	\includegraphics[width=0.28\linewidth,trim=4cm 10cm 3cm 10cm,clip]{fig/2D/model1/k/kX10.pdf}
         \includegraphics[width=0.28\linewidth,trim=4cm 10cm 3cm 10cm,clip]{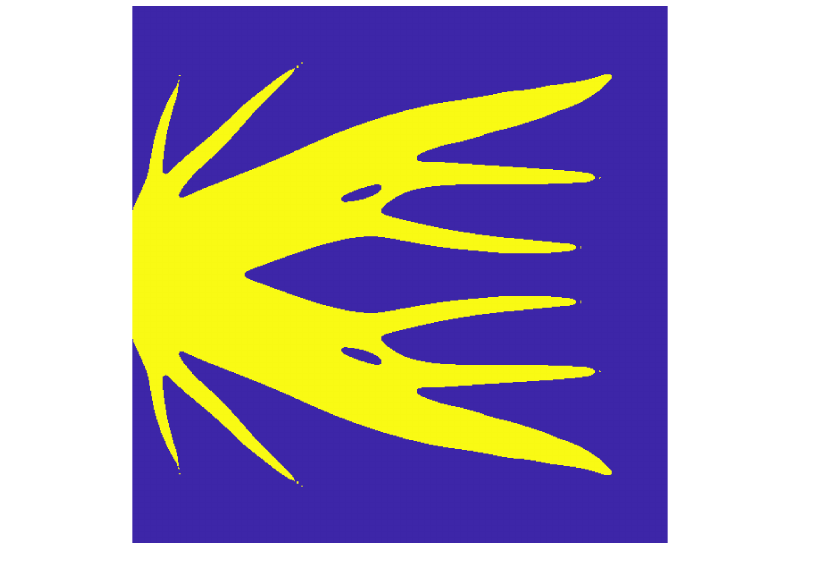}\\
        \includegraphics[width=0.28\linewidth,trim=2.5cm 9cm 3cm 10cm,clip]{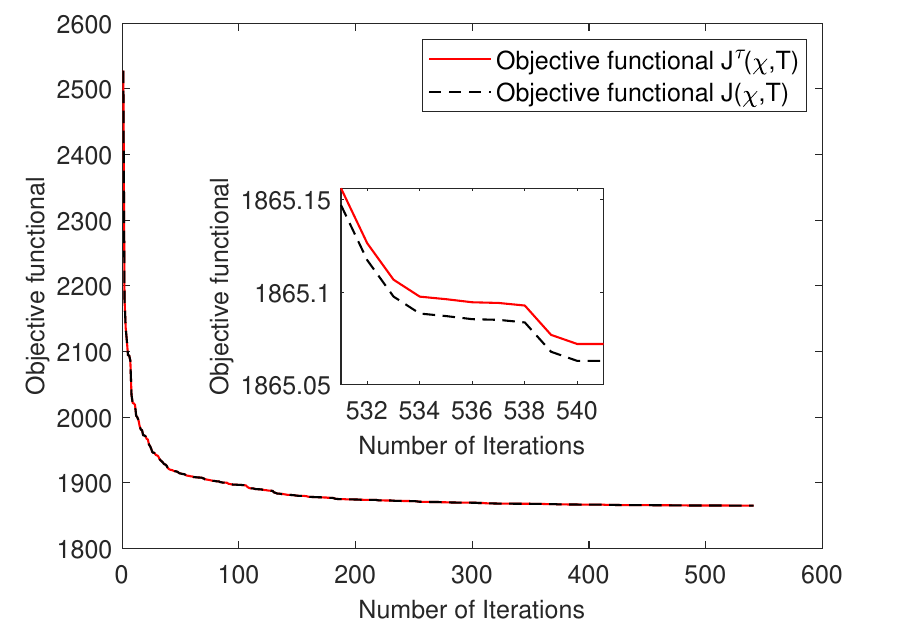}
        \includegraphics[width=0.28\linewidth,trim=2.5cm 9cm 3cm 10cm,clip]{fig/2D/model1/k/kE10.pdf}
	\includegraphics[width=0.28\linewidth,trim=2.5cm 9cm 3cm 10cm,clip]{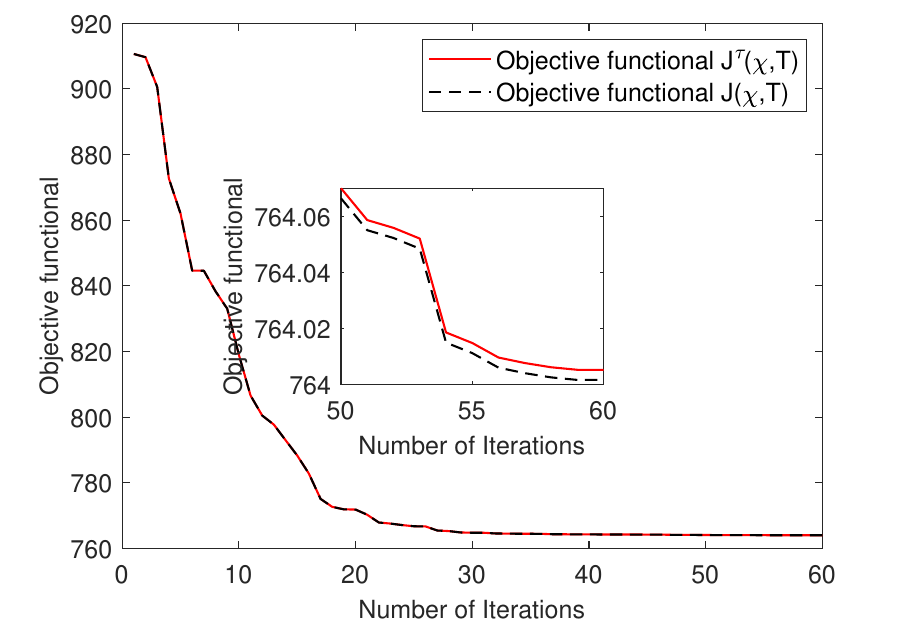}
	\caption{Comparing the impact of volume fraction on the optimal distribution of $\chi$ and energy value with $\kappa_1=10,\ \kappa_2=1$, $\gamma = 15$, $q_1=1,\ q_2=100$, $\tau$ on a $600\times 600$ grid. Left to right: Approximate solutions and objective functional values for volume fraction $\beta = 0.1,\ 0.2,\ 0.3$. See Section~\ref{sec:dependonpara}.}
	\label{volume}
\end{figure}

{\bf Dependency on $\gamma$:} We then consider different values of $\gamma$ with other values fixed. In Figure~\ref{fig:gamma}, we observe that as $\gamma$ decreases, the branches of the solution become finer with more details.  When the ratio of $\kappa_1/\kappa_2$ increases, larger values of $\gamma$ can regularize the solution. Figure~\ref{fig:k40} shows the optimal distribution of $\chi$ and the objective functional curve for the case of $\kappa_1=40,\ \kappa_2=1, \gamma=20$ and a random initial distribution of $\chi$. These are consistent with that larger $\gamma$ penalizes the length of the boundary. 

\begin{figure}[ht!]
    \centering
    \includegraphics[width=0.28\linewidth,trim=4cm 10cm 3cm 10cm,clip]{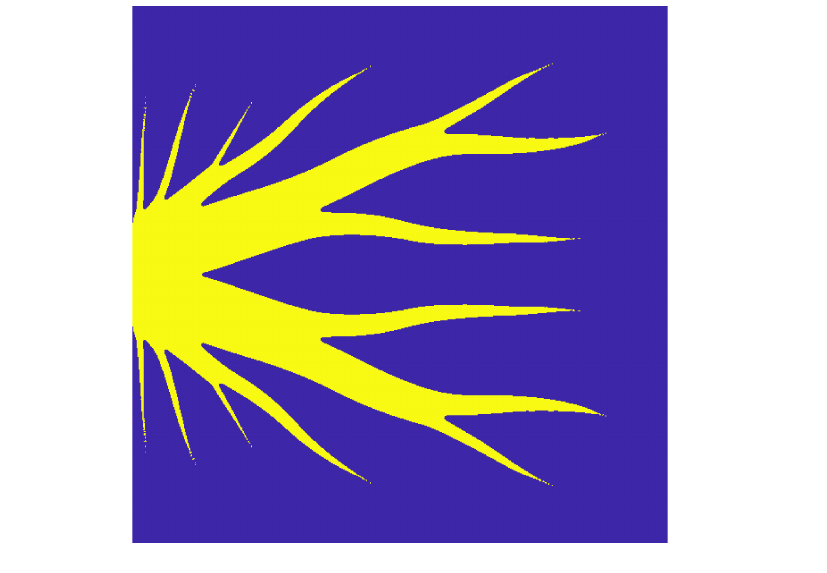}\includegraphics[width=0.28\linewidth,trim=4cm 10cm 3cm 10cm,clip]{fig/2D/model1/k/kX10.pdf}\includegraphics[width=0.28\linewidth,trim=4cm 10cm 3cm 10cm,clip]{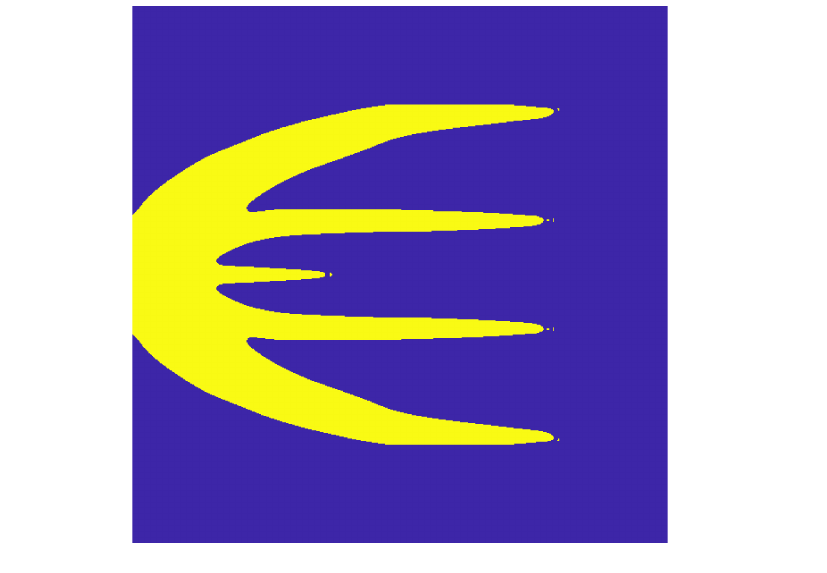}\\
    \includegraphics[width=0.28\linewidth,trim=2.5cm 9cm 3cm 10cm,clip]{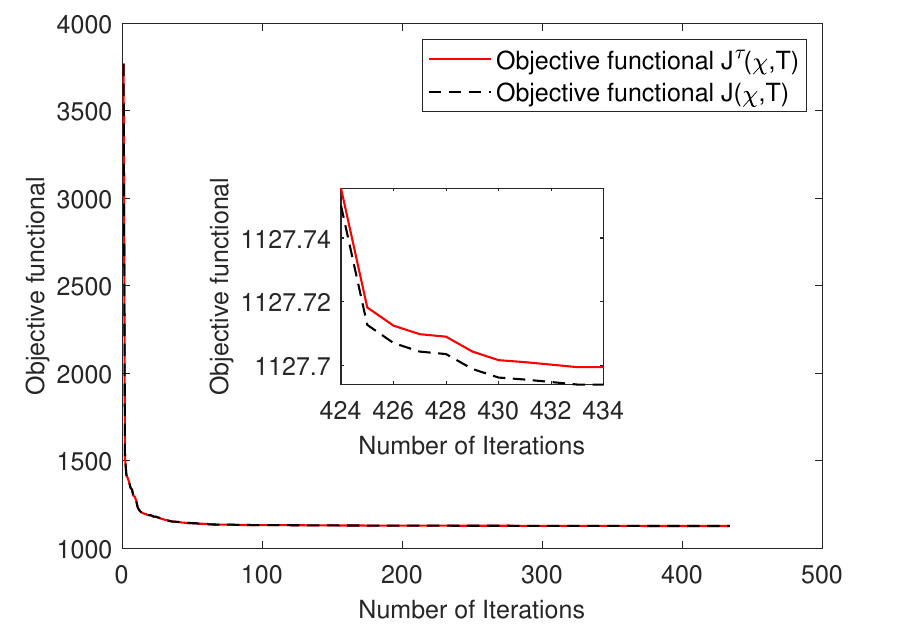}\includegraphics[width=0.28\linewidth,trim=2.5cm 9cm 3cm 10cm,clip]{fig/2D/model1/k/kE10.pdf}\includegraphics[width=0.28\linewidth,trim=2.5cm 9cm 3cm 10cm,clip]{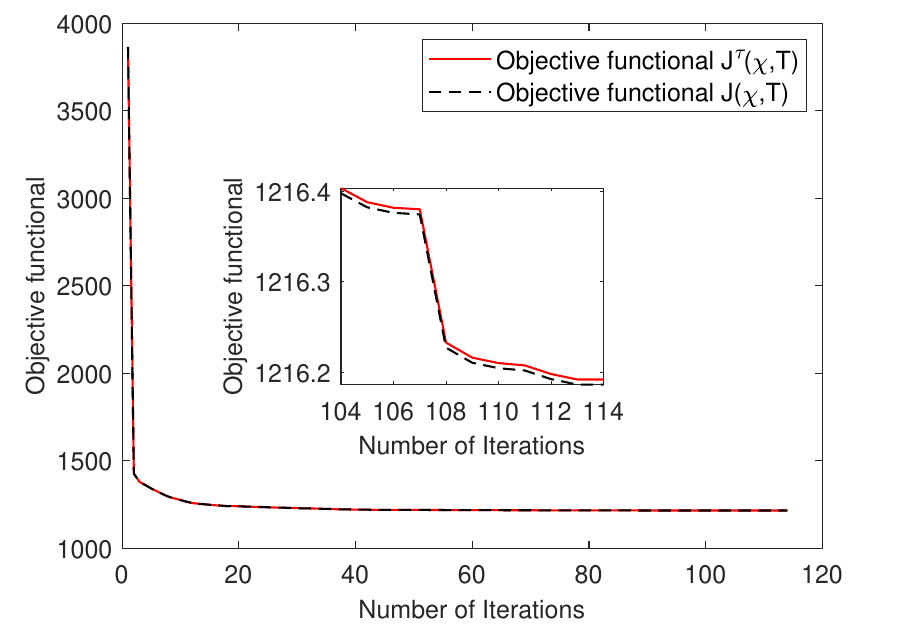}
    \caption{Comparing the impact of $\gamma$ on the optimal distribution of $\chi$ and energy value with $\kappa_1=10,\ \kappa_2=1$, $\tau = 1\times10^{-4}$, $q_1=1,\ q_2=100$ and volume fraction $ \beta = 0.2$ on a $600\times 600$ grid. Left to right: Approximate solutions and objective functional values for $\gamma = 12,\ 15,\ 50$. See Section~\ref{sec:dependonpara}.}
    \label{fig:gamma}
\end{figure}

\begin{figure}[ht!]
    \centering
    \includegraphics[width=0.28\linewidth,trim=6cm 10cm 3cm 10cm,clip]{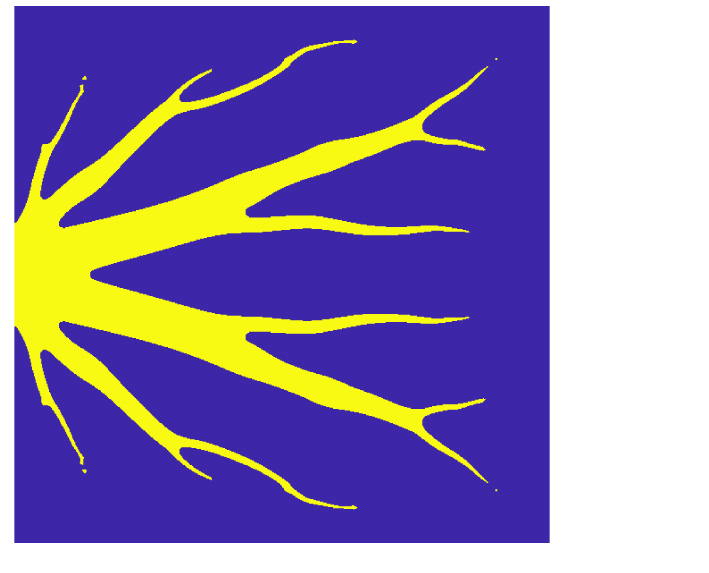}
    \includegraphics[width=0.28\linewidth,trim=2.5cm 8.5cm 3cm 10cm,clip]{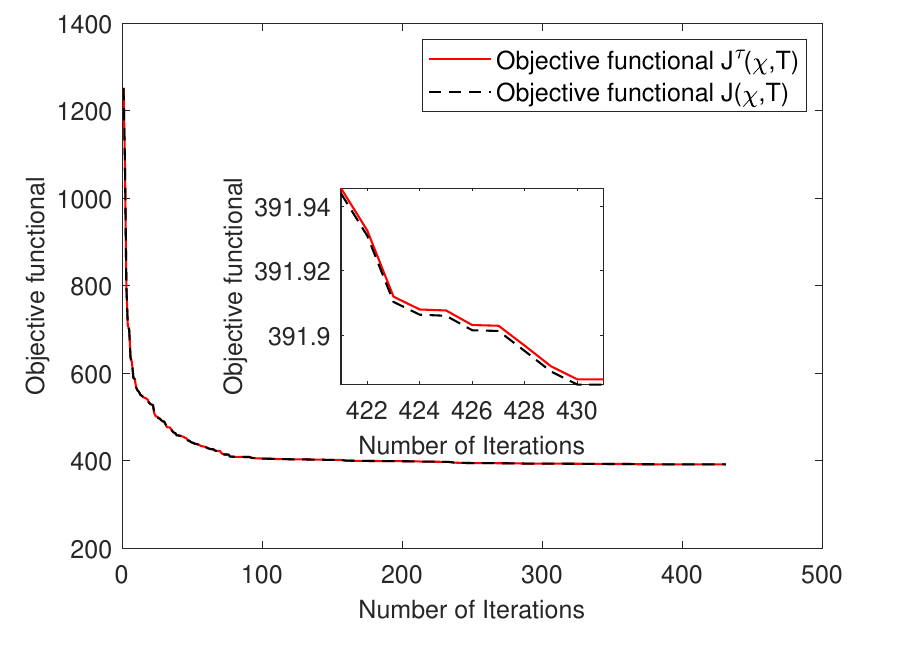}
    \caption{The parameters are set as $q_1=1,\ q_2=100$, $\kappa_1 = 40$, $\kappa_2=1$, $\gamma = 20$, volume fraction $\beta = 0.2$, mesh size = $600\times 600$, and $\tau = 1\times 10^{-4}$. Left: The optimal distribution of $\chi$. Right: Curve of the objective functional values. See Section~\ref{sec:dependonpara}.}
    \label{fig:k40}
\end{figure}

{\bf Dependency on $\tau$:}  The influence of $\tau$ on the optimal distribution of $\chi$ is also shown in Figure~\ref{fig:tau}. As $\tau$ increases, the distribution of $\chi$ has fewer fine branches.

\begin{figure}[ht!]
    \centering
    \includegraphics[width=0.28\linewidth,trim=4cm 10cm 3cm 10cm,clip]{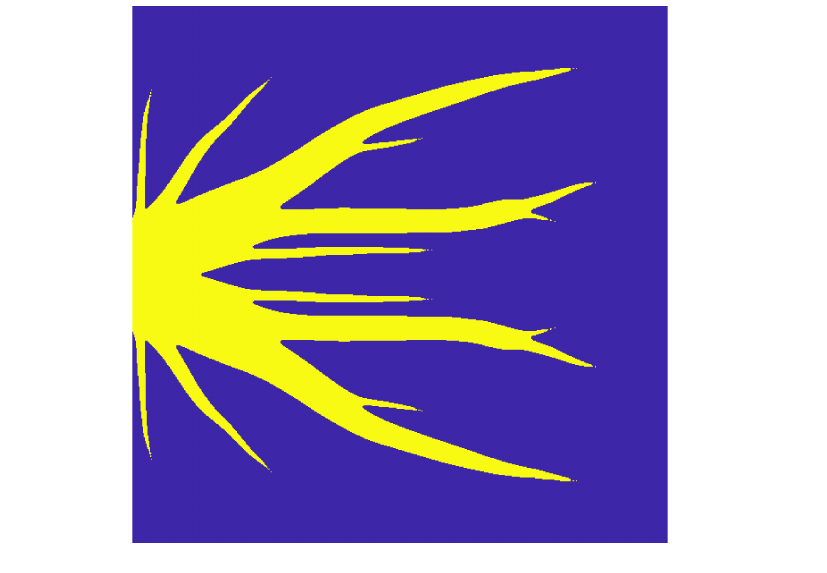}
    \includegraphics[width=0.28\linewidth,trim=4cm 10cm 3cm 10cm,clip]{fig/2D/model1/k/kX10.pdf}
    \includegraphics[width=0.28\linewidth,trim=4cm 10cm 3cm 10cm,clip]{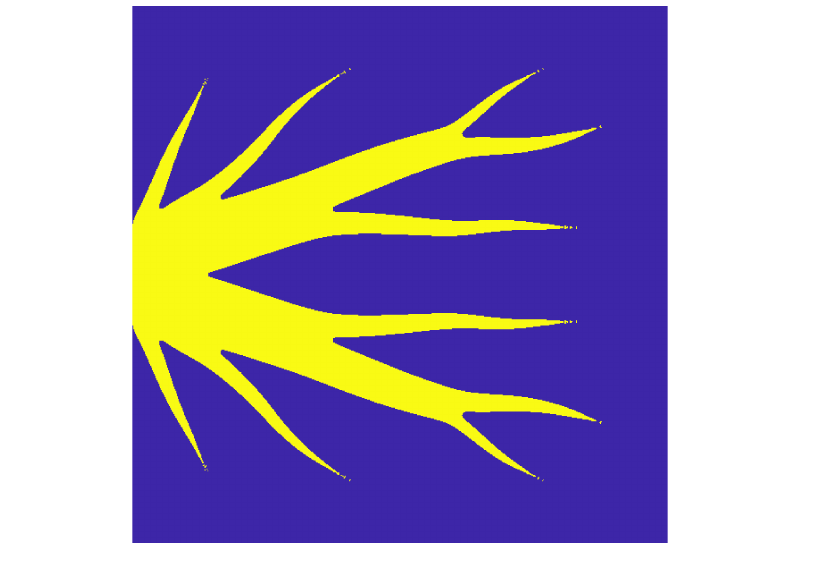}\\
    \includegraphics[width=0.28\linewidth,trim=2.5cm 9cm 3cm 10cm,clip]{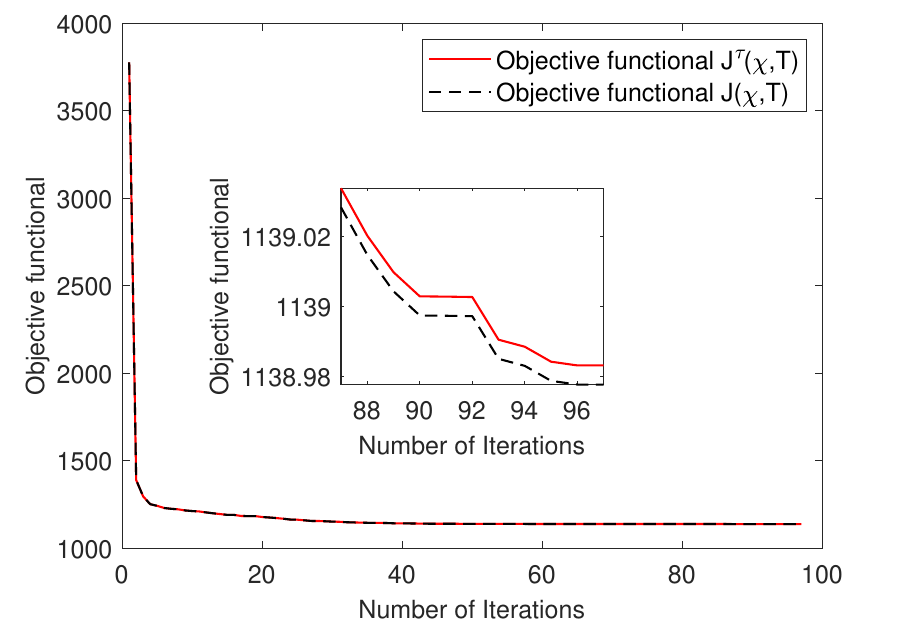}
    \includegraphics[width=0.28\linewidth,trim=2.5cm 9cm 3cm 10cm,clip]{fig/2D/model1/k/kE10.pdf}
    \includegraphics[width=0.28\linewidth,trim=2.5cm 9cm 3cm 10cm,clip]{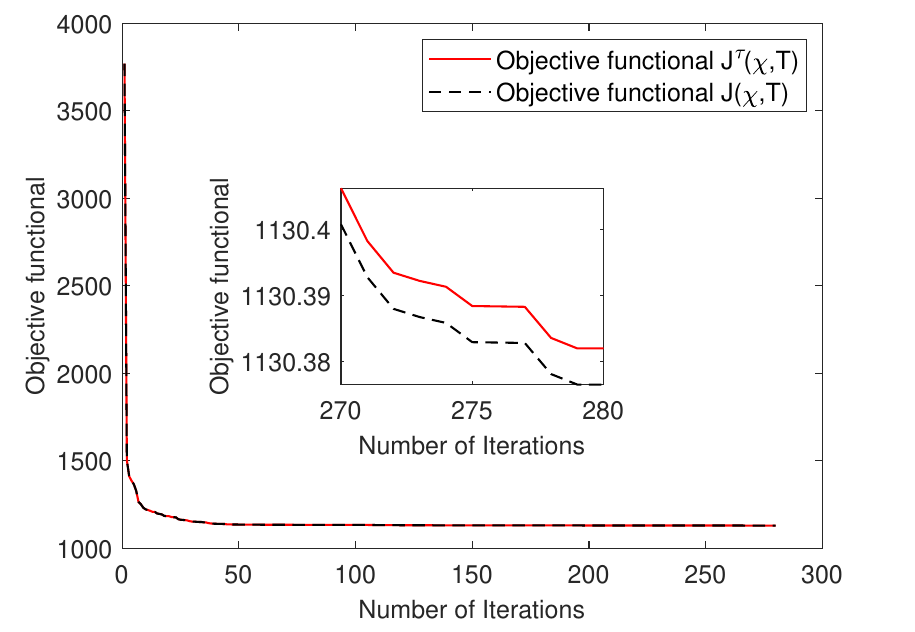}
    \caption{Comparing the impact of $\tau$ on the optimal distribution of $\chi$ and energy value with $\kappa_1=10,\ \kappa_2=1$, $\gamma = 15$, $q_1=1,\ q_2=100$ and volume fraction $ \beta = 0.2$ on a $600\times 600$ grid. Left to right: Approximate solutions and objective functional values for $\tau = 0.35\times10^{-4}$, $1\times10^{-4}$, $1.5\times10^{-4}$. See Section~\ref{sec:dependonpara}.}
    \label{fig:tau}
\end{figure}

{\bf Dependency on initial guesses:}
Figure~\ref{random} shows the impact of the initial distribution of $\chi$ on the solution. We choose three different random initial guesses of $\chi$ for the cases in Figure~\ref{random}. We can see from Figure~\ref{random} that the optimal distributions of $\chi$ indeed depends on different choices of the initial distributions of $\chi$, but the objective functional always decays similarly. We note that this example shows the key difficulty of topology optimization problems where the solution landscape is very complicated. One usually can only find a local stationary solution starting with a random initial guess.

\begin{figure}[ht!]
	\centering
        \includegraphics[width=0.28\linewidth,trim=4cm 10cm 3cm 10cm,clip]{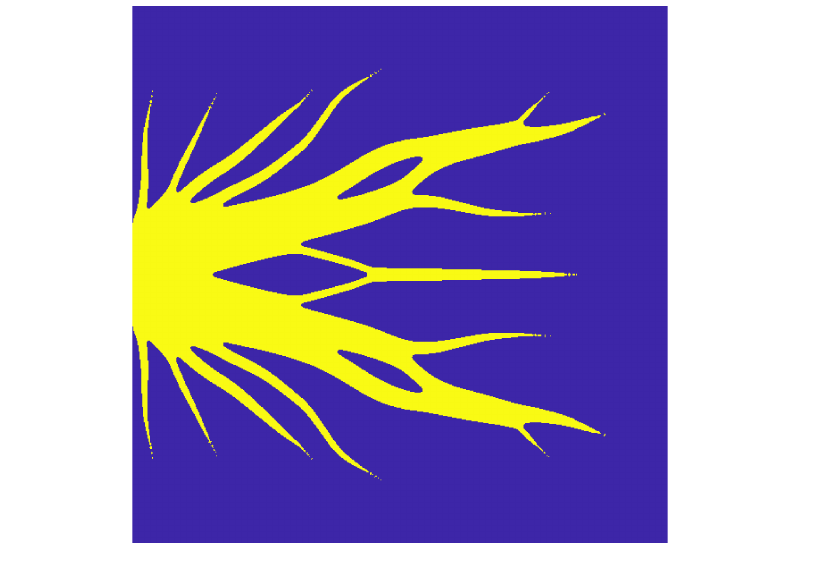}
	\includegraphics[width=0.28\linewidth,trim=4cm 10cm 3cm 10cm,clip]{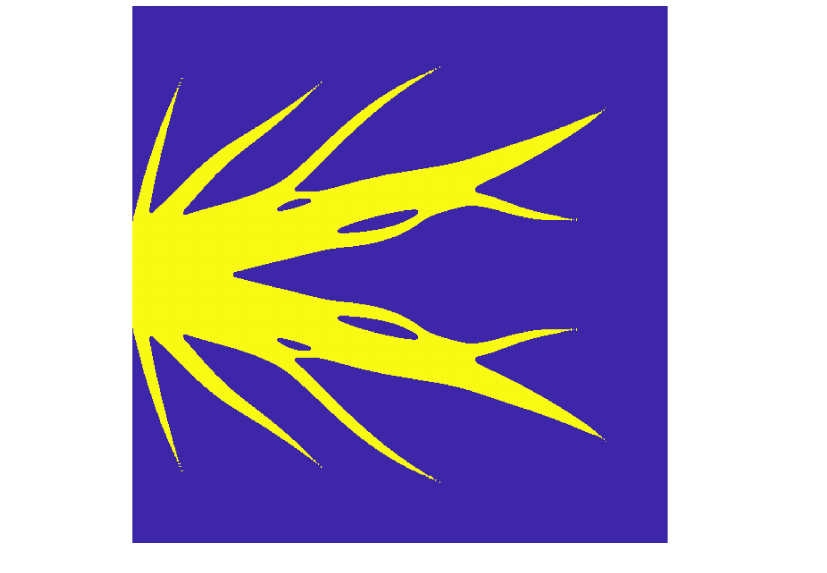}
        \includegraphics[width=0.28\linewidth,trim=4cm 10cm 3cm 10cm,clip]{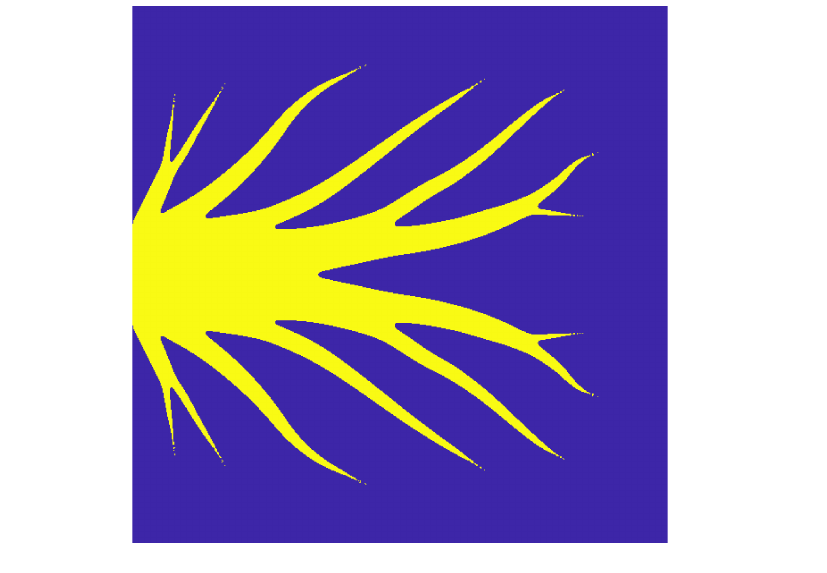}\\
        \includegraphics[width=0.28\linewidth,trim=2.5cm 9cm 3cm 10cm,clip]{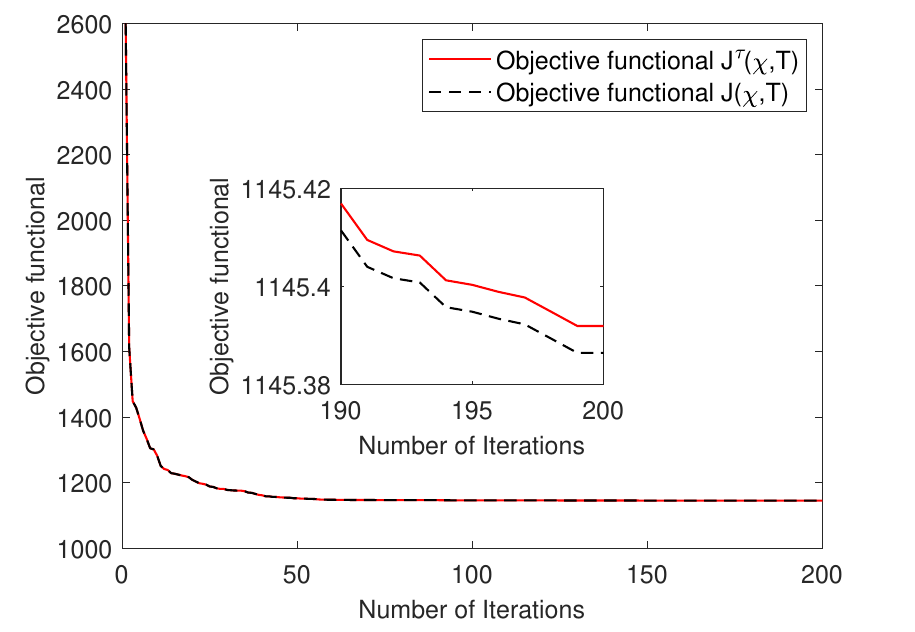}
        \includegraphics[width=0.28\linewidth,trim=2.5cm 9cm 3cm 10cm,clip]{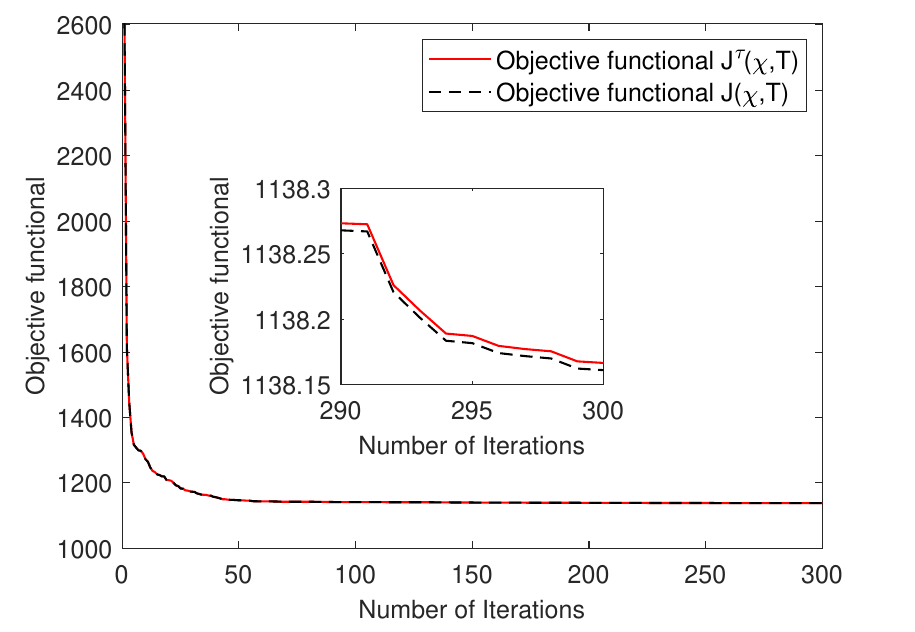}
	\includegraphics[width=0.28\linewidth,trim=2.5cm 9cm 3cm 10cm,clip]{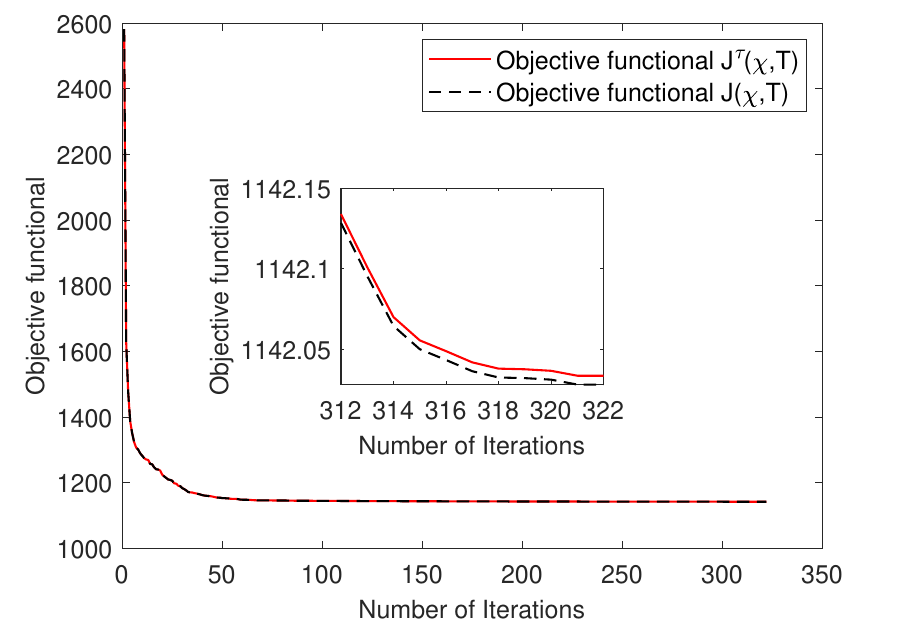}
	\caption{Comparing the impact of the initial distribution of $\chi$ on the optimal distribution of $\chi$ and energy value with $\kappa_1=10,\ \kappa_2=1$, $q_1=1,\ q_2=100$, $\gamma = 15$ and volume fraction $\beta = 0.2$ on a $600\times 600$ grid. Left to right: Approximate solutions and objective functional values for three random initial distribution of $\chi$. See Section~\ref{sec:dependonpara}.}
	\label{random}
\end{figure}

\subsection{Area-to-sides problem}\label{sec:area2sides}
 In this example, we consider the problem where the Dirichlet boundary condition is only imposed as shown in Figure~\ref{fig:Example2} with $l=1$. Due to the symmetry of the problem, we only optimize over the upper left quadrant of the domain. We also set $\xi=1\times 10^{-5}$ in this example. Similar to the above area-to-point example, we test the impact of $\frac{\kappa_1}{\kappa_2}$, $\frac{q_1}{q_2}$ on the optimal distribution of $\chi$ in Figure~\ref{fig:example2-k} and Figure~\ref{fig:example2-q}. In Figure~\ref{fig:example2-k}, the optimal distributions of $\chi$ exhibits finer branches with a higher conductivity ratio. Additionally, the original objective functional is smaller than the modified objective functional and both objective functional always decay in all examples. We also note that the value of objective functional which 
 stably converges during iteration decreases as the ratio of $\kappa_1/\kappa_2$ increases. Figure~\ref{fig:example2-q} shows that the heat-generation rate ratio $\frac{q_1}{q_2}$ influences the optimal distribution of $\chi$ and the values of objective functional. 
 
 \begin{figure}[ht!]
     \centering
     \includegraphics[width=0.4\linewidth,trim=0cm 1.5cm 0cm 10cm,clip]{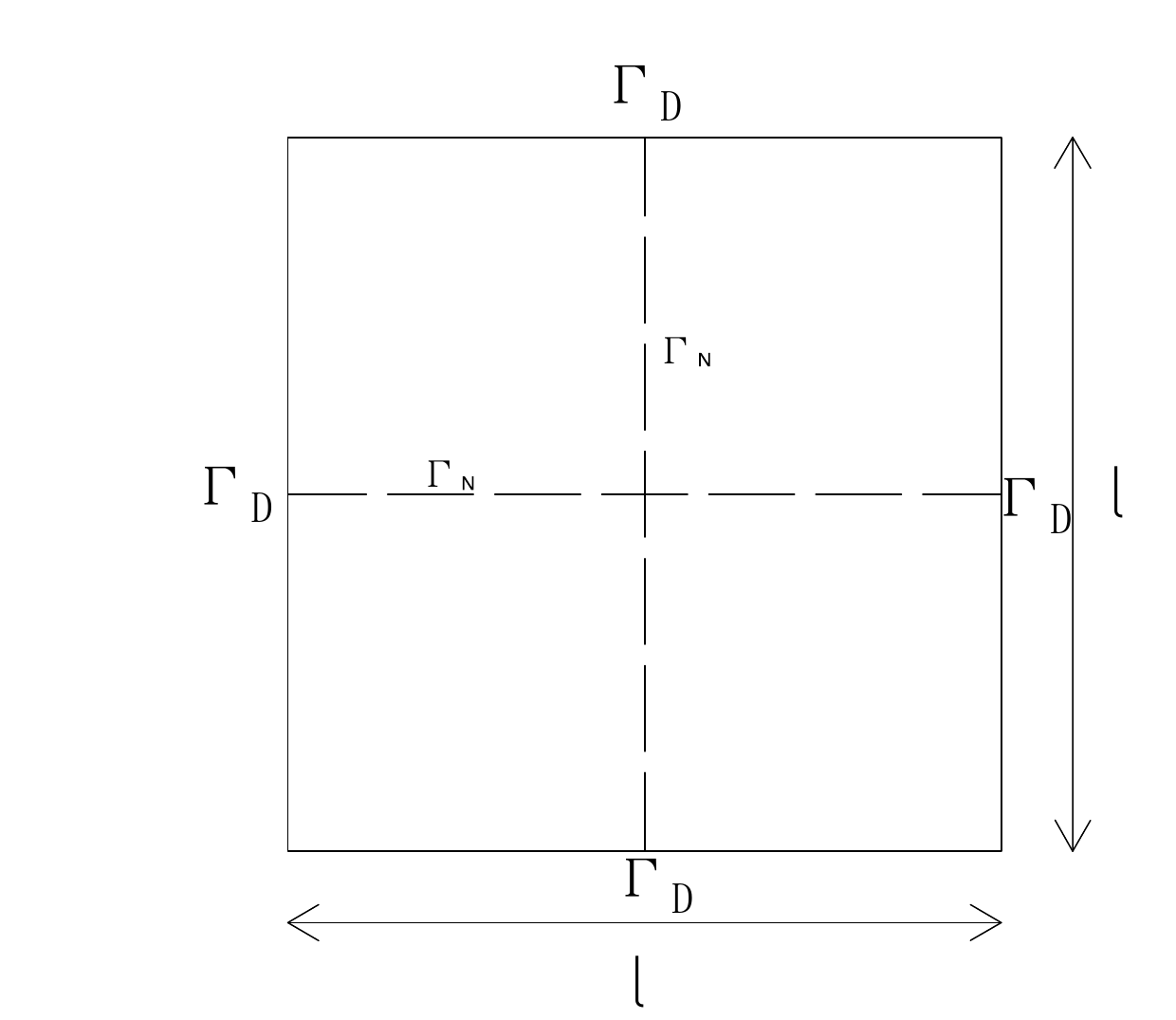}
     \includegraphics[width=0.4\linewidth,trim=5cm 10cm 3cm 10cm,clip]{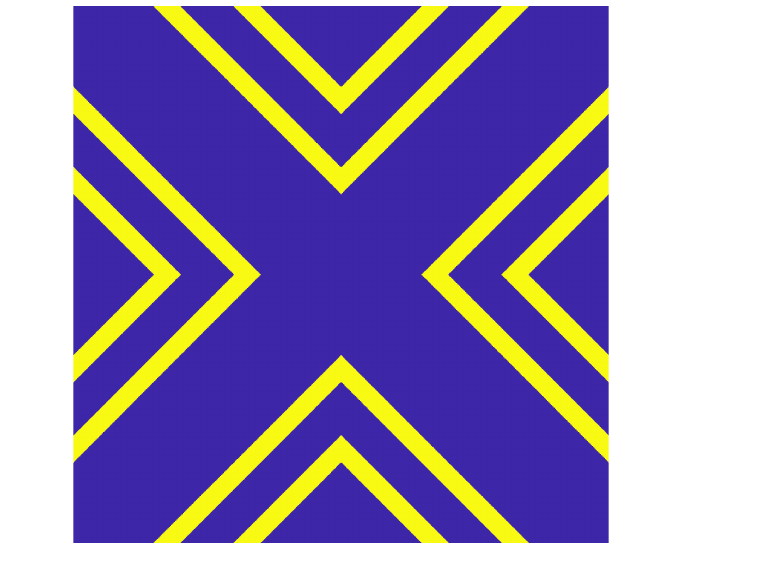}
     \caption{Left: The volume-to-sides problem. The boundary condition is Dirichlet boundary. Right: The initial distribution of $\chi$. See Section~\ref{sec:area2sides}.}
     \label{fig:Example2}
 \end{figure}

\begin{figure}[ht!]
    \centering
    \includegraphics[width=0.28\linewidth,trim=4cm 10cm 3cm 10cm,clip]{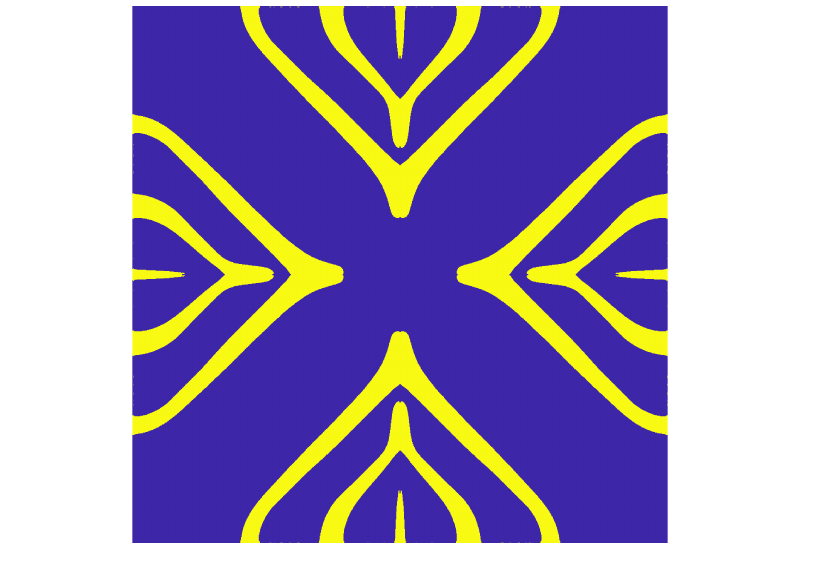}
    \includegraphics[width=0.28\linewidth,trim=4cm 10cm 3cm 10cm,clip]{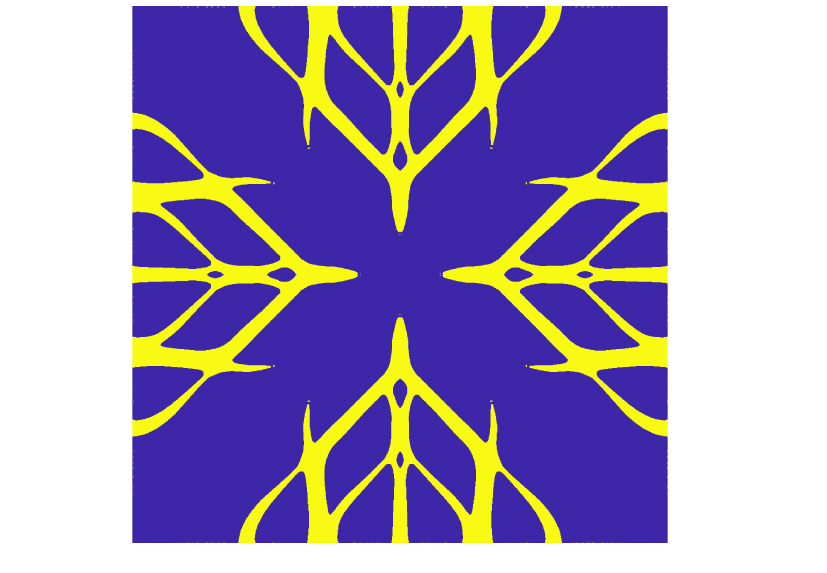}
    \includegraphics[width=0.28\linewidth,trim=4cm 10cm 3cm 10cm,clip]{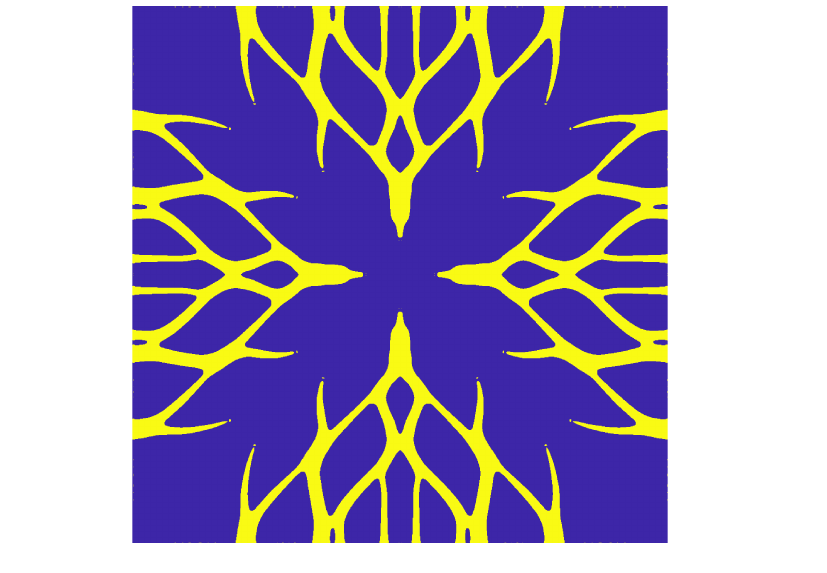}\\
    \includegraphics[width=0.28\linewidth,trim=2.5cm 9cm 3cm 10cm,clip]{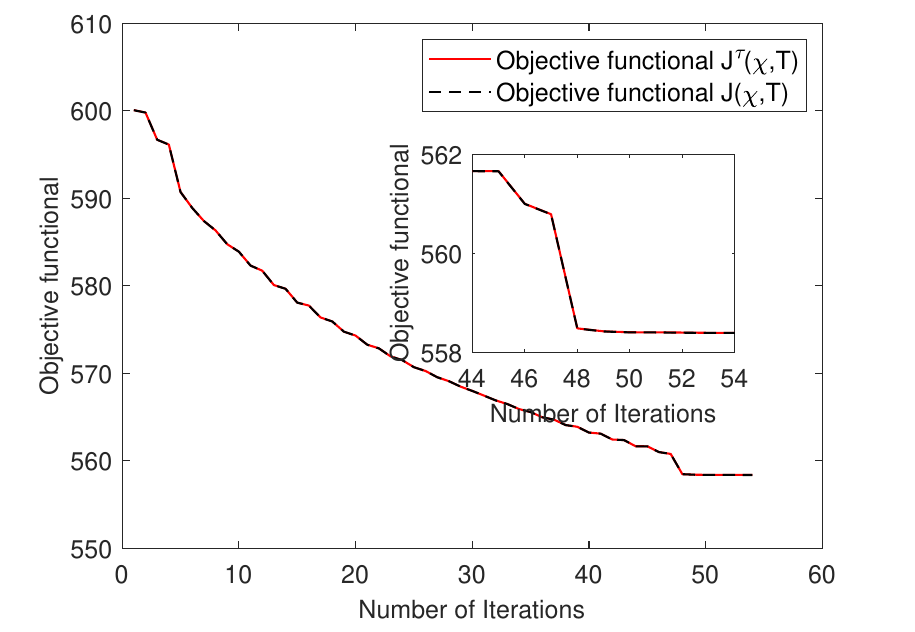}
    \includegraphics[width=0.28\linewidth,trim=2.5cm 9cm 3cm 10cm,clip]{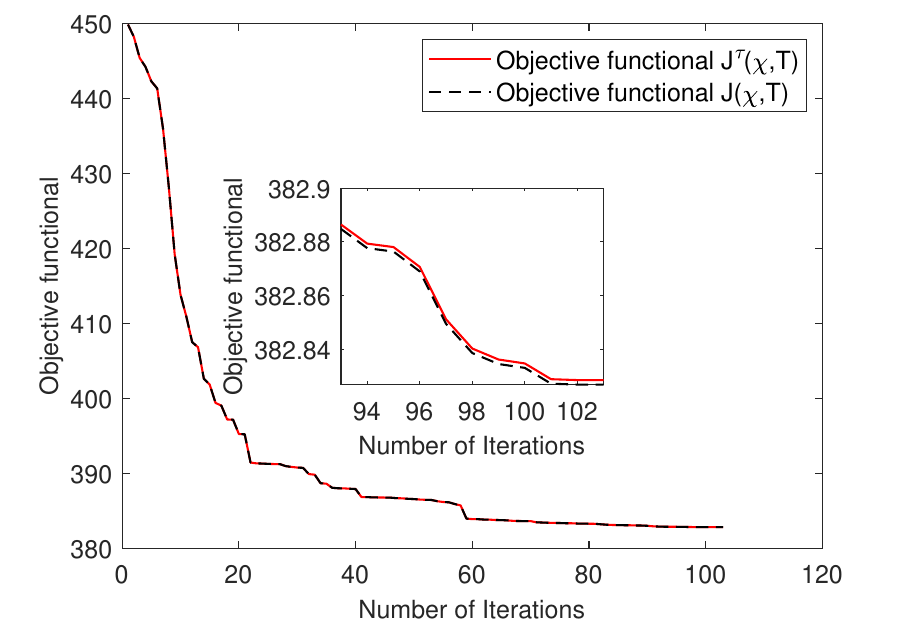}
    \includegraphics[width=0.28\linewidth,trim=2.5cm 9cm 3cm 10cm,clip]{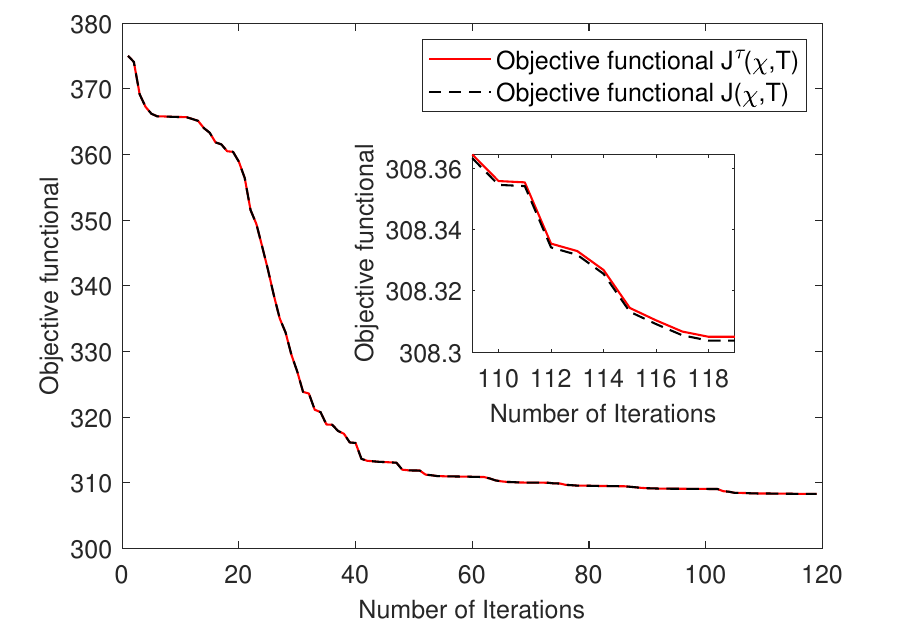}
    \caption{Comparing the impact of $\frac{\kappa_1}{\kappa_2}$ on the optimal distribution of $\chi$ and energy on a $1200\times 1200$ grid with $q_1=1,\ q_2=100$, $\tau = 1\times10^{-4}$, $\gamma = 20$, $\xi=1\times 10^{-5}$. Left to right: Approximate solutions and objective functional values for $\kappa_1=5,\ 10,\ 15$, and $\kappa_2=1$. See Section~\ref{sec:area2sides}.}
    \label{fig:example2-k}
\end{figure}

\begin{figure}[ht!]
    \centering
    \includegraphics[width=0.28\linewidth,trim=4cm 10cm 3cm 10cm,clip]{fig/2D/model2/q/Xq1.pdf}
    \includegraphics[width=0.28\linewidth,trim=4cm 10cm 3cm 10cm,clip]{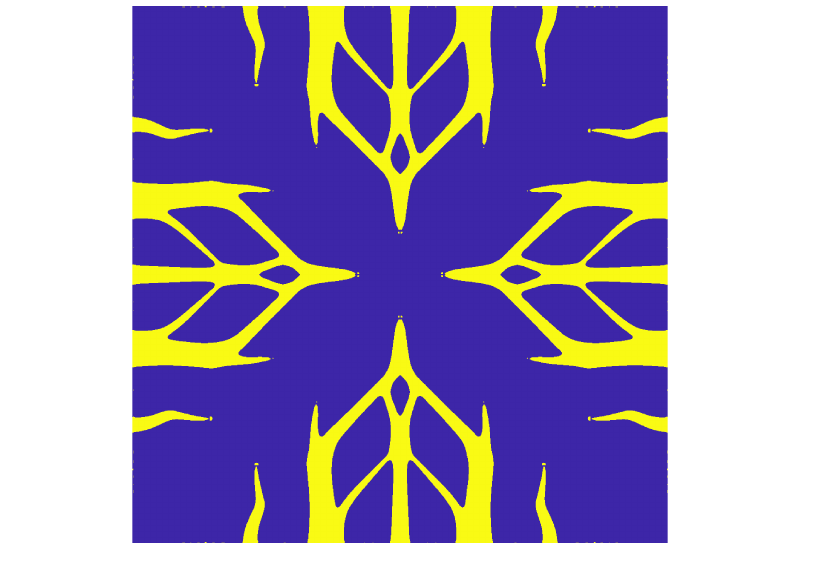}
    \includegraphics[width=0.28\linewidth,trim=4cm 10cm 3cm 10cm,clip]{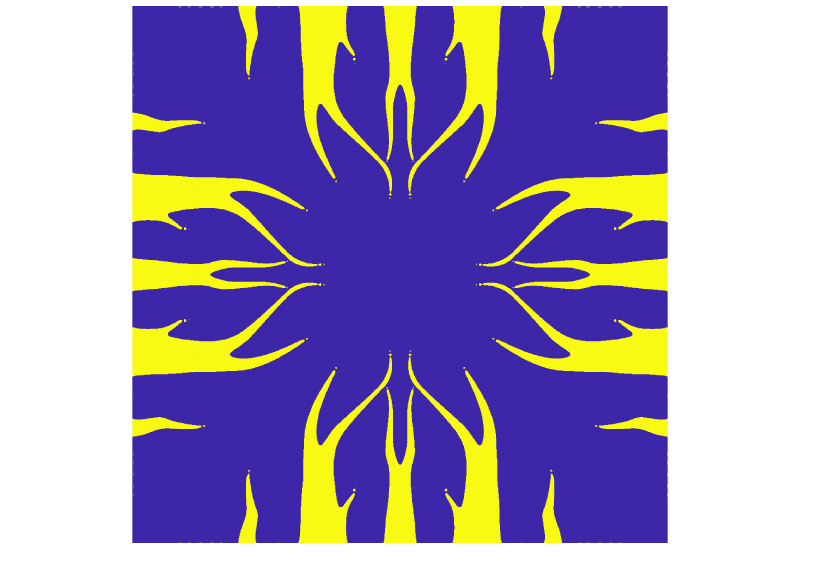}\\
    \includegraphics[width=0.28\linewidth,trim=2.5cm 9cm 3cm 10cm,clip]{fig/2D/model2/q/Eq1.pdf}
    \includegraphics[width=0.28\linewidth,trim=2.5cm 9cm 3cm 10cm,clip]{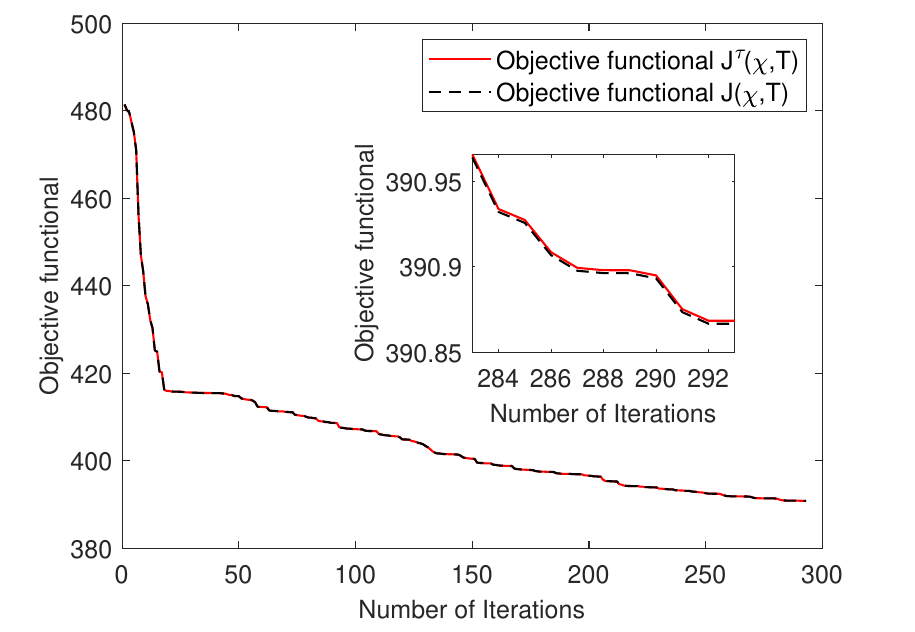}
    \includegraphics[width=0.28\linewidth,trim=2.5cm 9cm 3cm 10cm,clip]{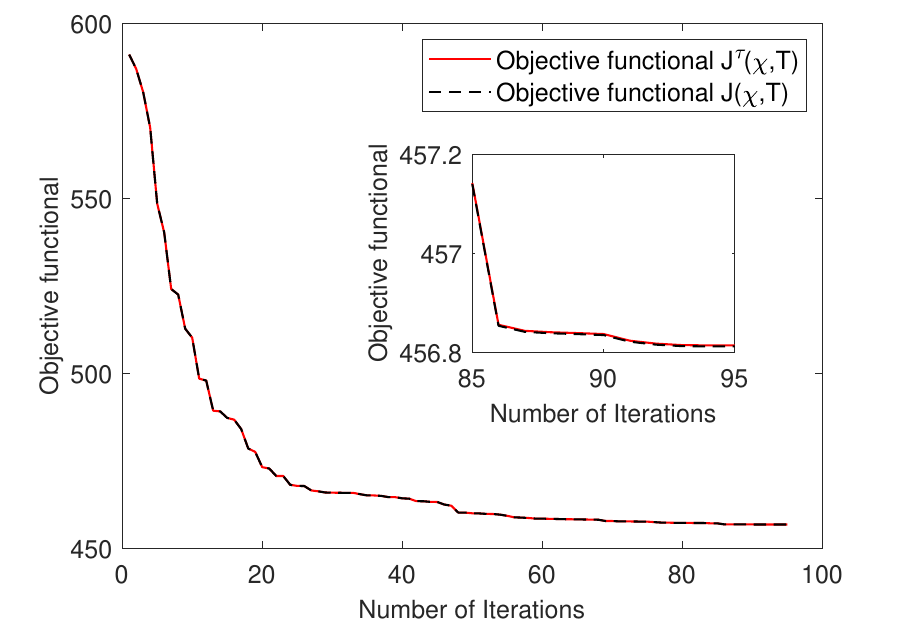}
    \caption{Comparing the impact of $\frac{q_1}{q_2}$ on the optimal distribution of $\chi$ and energy on a $1200\times 1200$ grid with $\kappa_1=10,\ \kappa_2=1$, $\tau = 1\times10^{-4}$, $\gamma = 20$, $\xi=1\times 10^{-5}$. Left to right: Approximate solutions and objective functional values for $q_1=1,\ 20,\ 80$, $q_2=100$ from left to right. See Section~\ref{sec:area2sides}.}
    \label{fig:example2-q}
\end{figure}

 Additionally, we set $\kappa_1=10,\ \kappa_2=1$, $q_1=1,\ q_2=100$, $\tau = 1\times10^{-4}$, $\gamma = 20$ to test the impact of volume fraction $\beta$ and the mesh size on the optimal distribution of $\chi$ in Figure~\ref{fig:example2-v} and Figure~\ref{fig:example2-mesh} respectively. In Figure~~\ref{fig:example2-v}, the results show that the branches become to be concentrating on the main branches and the value of objective functional which stably converges during iteration is gradually decreasing as the volume fraction $\beta$ increases on a $1200\times 1200$ grid. In order to see the impact of mesh sizes, we can also see from Figure~\ref{fig:example2-mesh} that the prediction-correction-based ICTM demonstrates the stability of the Algorithm~\ref{a:prediction-correction} on different grids and the finer distribution of $\chi$ is observed on the finer grids.

\begin{figure}[ht!]
    \centering
        \includegraphics[width=0.28\linewidth,trim=4cm 10cm 3cm 10cm,clip]{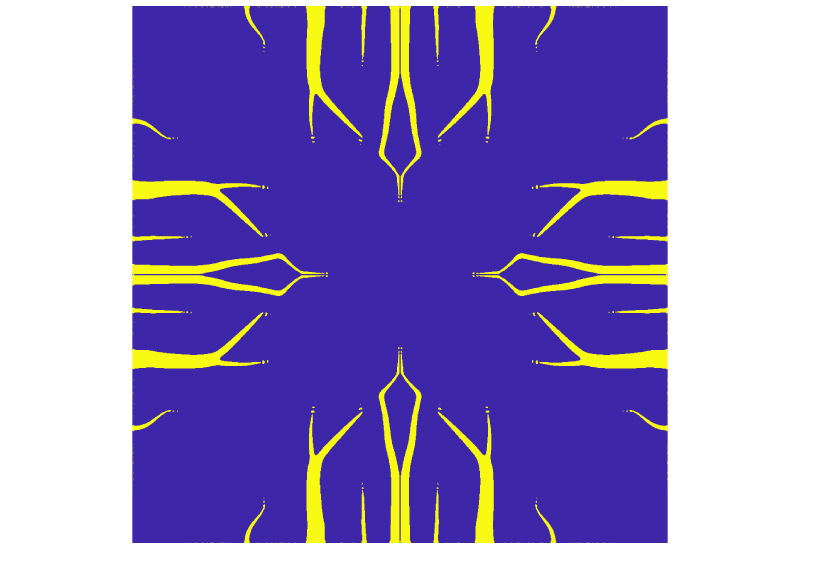}
	\includegraphics[width=0.28\linewidth,trim=4cm 10cm 3cm 10cm,clip]{fig/2D/model2/q/Xq1.pdf}
        \includegraphics[width=0.28\linewidth,trim=4cm 10cm 3cm 10cm,clip]{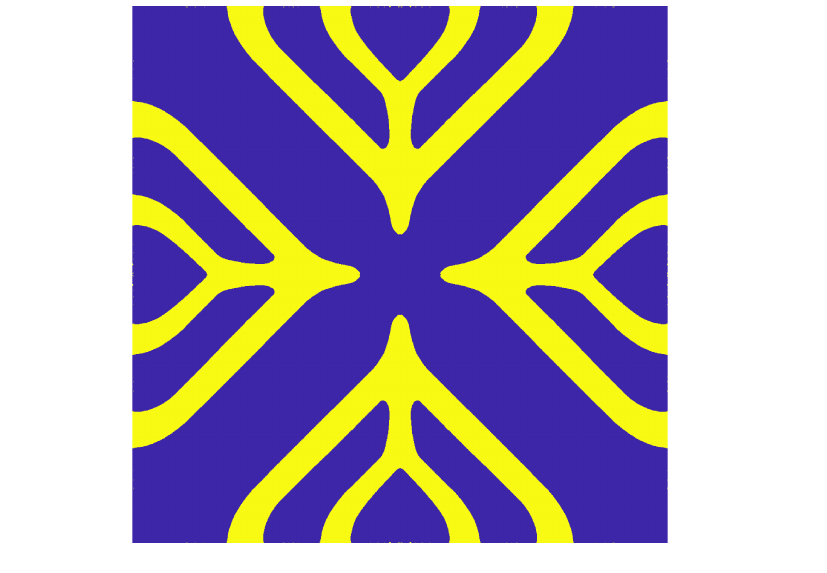}\\
        \includegraphics[width=0.28\linewidth,trim=2.5cm 9cm 3cm 10cm,clip]{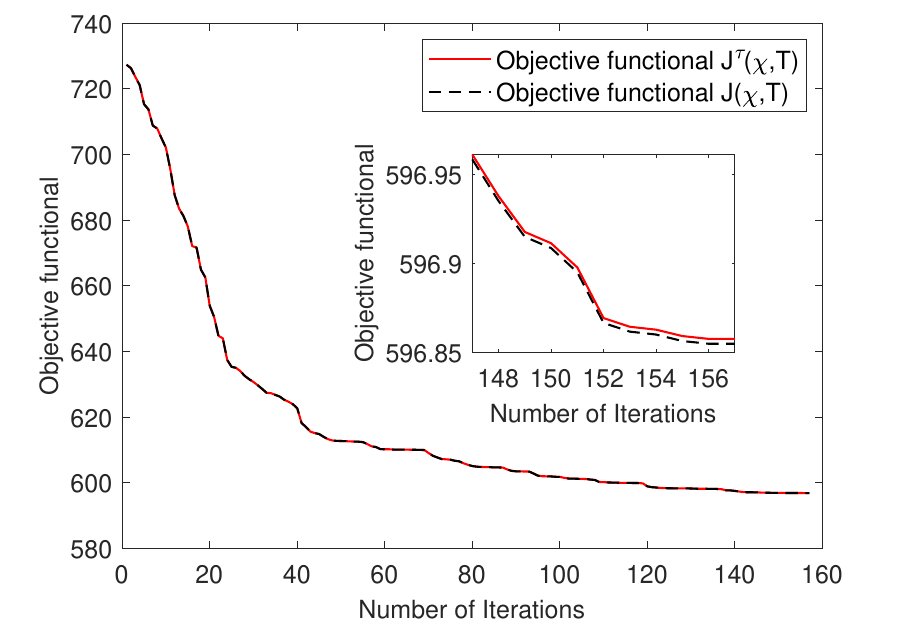}
        \includegraphics[width=0.28\linewidth,trim=2.5cm 9cm 3cm 10cm,clip]{fig/2D/model2/q/Eq1.pdf}
	\includegraphics[width=0.28\linewidth,trim=2.5cm 9cm 3cm 10cm,clip]{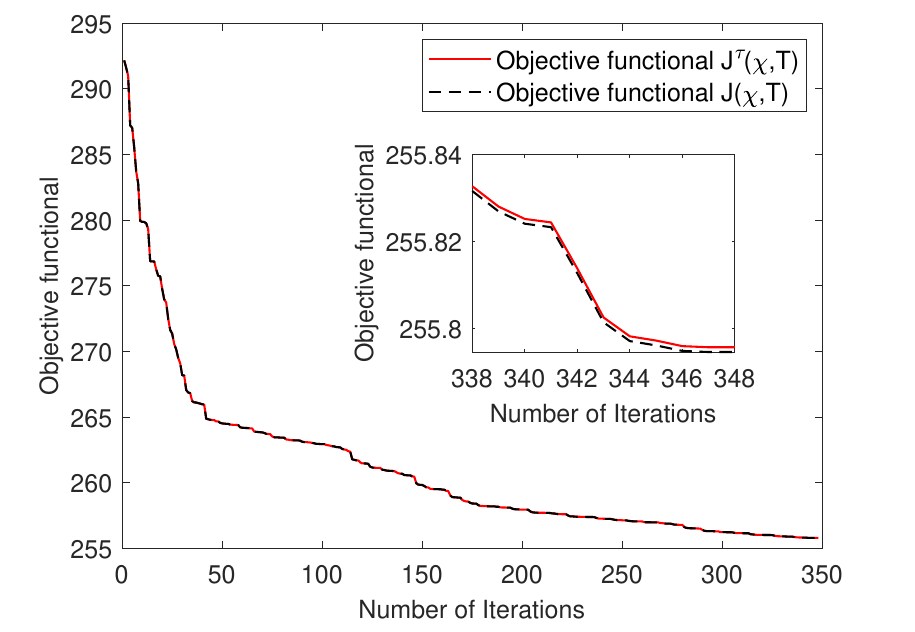}
	\caption{Comparing the influences of volume fraction on the optimal distribution of $\chi$ and energy on a $1200\times 1200$ grid with $\kappa_1=10,\ \kappa_2=1$, $q_1=1,\ q_2=100$, $\tau = 1\times10^{-4}$, $\gamma = 20$ and $\xi=1\times 10^{-5}$. Left to right: Approximate solutions and objective functional values for volume fraction $\beta = 0.1, \ 0.2,\  0.3$. See Section~\ref{sec:area2sides}.}
    \label{fig:example2-v}
\end{figure}

\begin{figure}[ht!]
    \centering
    \includegraphics[width=0.28\linewidth,trim=4cm 10cm 3cm 10cm,clip]{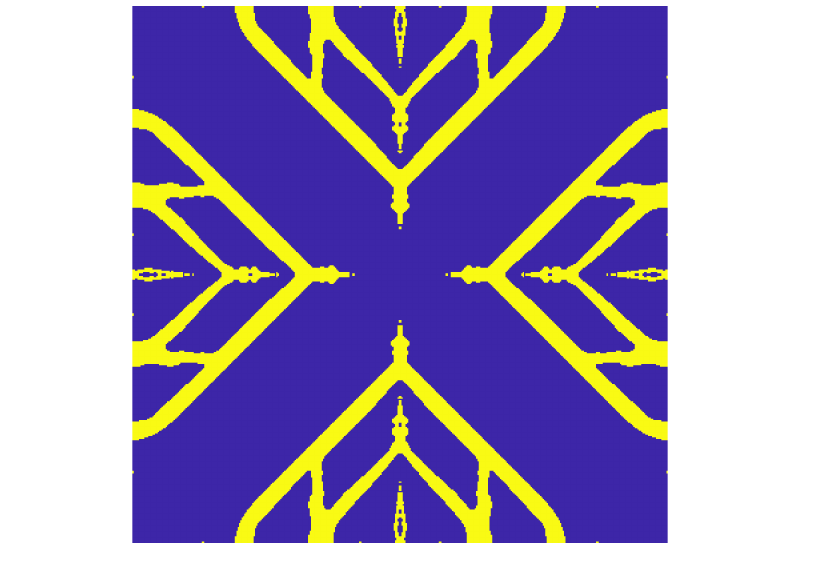}
    \includegraphics[width=0.28\linewidth,trim=4cm 10cm 3cm 10cm,clip]{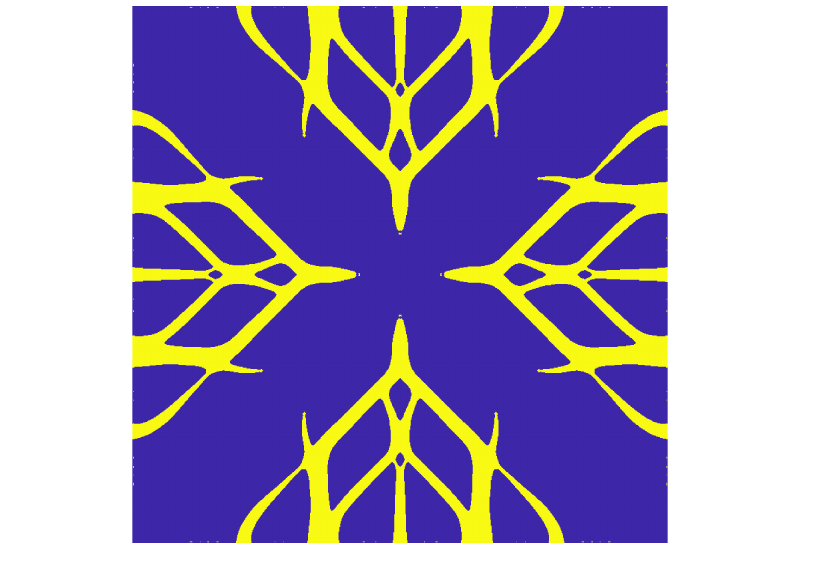}
    \includegraphics[width=0.28\linewidth,trim=4cm 10cm 3cm 10cm,clip]{fig/2D/model2/q/Xq1.pdf}\\
     \includegraphics[width=0.28\linewidth,trim=2.5cm 9cm 3cm 10cm,clip]{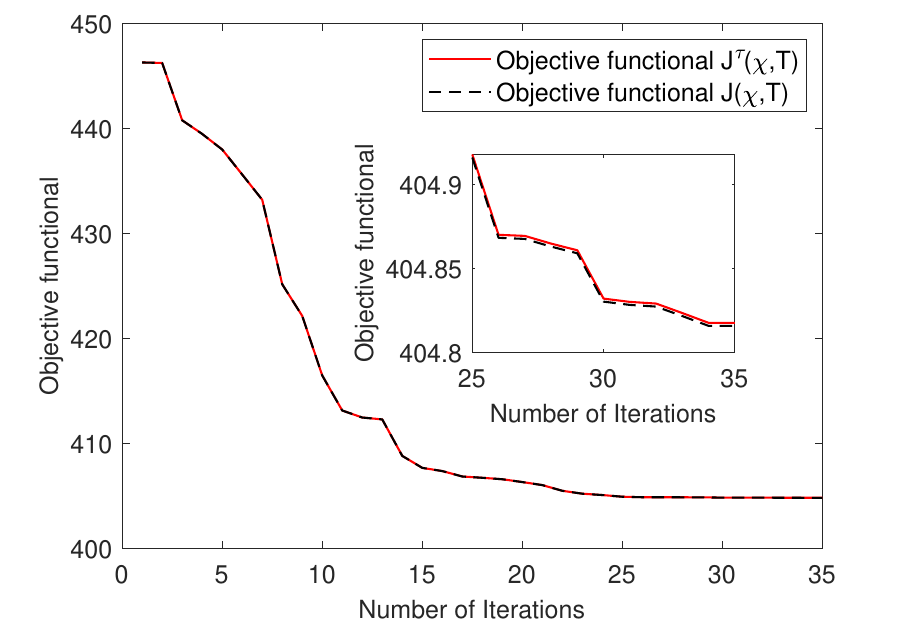}
    \includegraphics[width=0.28\linewidth,trim=2.5cm 9cm 3cm 10cm,clip]{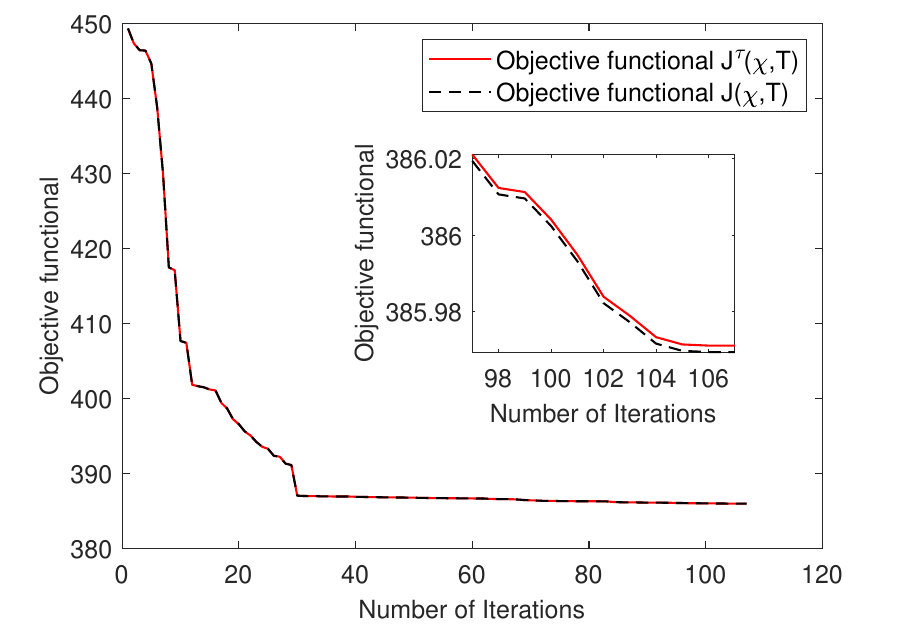}
    \includegraphics[width=0.28\linewidth,trim=2.5cm 9cm 3cm 10cm,clip]{fig/2D/model2/q/Eq1.pdf}
    \caption{Comparing the influences of mesh size on the optimal distribution of $\chi$ and energy. Left to right: Approximate solutions and objective functional values on grids $400\times 400$, $800\times 800$, $1200\times 1200$. See Section~\ref{sec:area2sides}.}
    \label{fig:example2-mesh}
\end{figure}

\subsection{Three dimensional results}\label{sec:3d}
We now present an example in three dimensions as shown in \cite{burger2013three}, focusing on a cubic domain for the volume-to-surface problem depicted in Figure~\ref{fig:model3} with $l=1$. The Neumann boundary condition is imposed on the boundary except for the Dirichlet boundary condition which is imposed on a small square region with a characteristic length of side $0.1l$ located at the middle region of one of the external surfaces. 

% To enhance the algorithm's efficiency and improve the continuity of the distribution of $\chi$, we employed the imgaussfilt function in MATLAB to smooth $\chi$, and used
% \begin{equation*}
%     G_\sigma = \frac{1}{2\pi\sigma^2}exp(-\frac{|\mathbf{x}|^2}{2\sigma^2}),
% \end{equation*}
% instead of $G_\tau$.

We test the following cases with the random initial distribution of $\chi$. Firstly, we evaluate the influence of the conductivity ratio $\frac{\kappa_1}{\kappa_2}$ on the optimal distribution of $\chi$ on a $60\times60\times 60$ grid. We set the parameters as $q_1=1$, $q_2=100$, $\xi = 1\times 10^{-7}$, $\gamma = 20$, $\tau = 3\times 10^{-5}$, volume fraction $\beta= 0.2$. Let $\kappa_1=20,\ \kappa_2=1$ in Figure~\ref{fig:k20v2} and $\kappa_1=40,\ \kappa_2=1$ in Figure~\ref{fig:k40v2}, we observe that the branches become progressively thinner as the value of $\frac{\kappa_1}{\kappa_2}$ increases and the objective functional decaying property is observed. In Figure~\ref{fig:k20v2}, the top three graphs represent the cutting figures of the distribution of $\chi$ on the planes $z=0,\ y=1/2$ and $x=1/2$ respectively. Next, we fix $q_1=1$,\ $q_2=100$, $\kappa_1=40,\ \kappa_2=1$, $\xi = 1e-7$, $\gamma = 20$, $\tau=3\times 10^{-5}$ and volume fraction $\beta= 0.2$ to test the impact of mesh sizes. We show the impacts of mesh size on the optimal distribution of $\chi$ in Figure~\ref{fig:k40v2} and Figure~\ref{fig:N128} respectively. Similar to the two dimensions problem, the optimal distributions of $\chi$ show finer branches in the finer mesh. These results are similar to those obtained in \cite{burger2013three}.

\begin{figure}[ht!]
    \centering
    \includegraphics[width=0.4\linewidth,trim=0cm 1.5cm 0cm 10cm,clip]{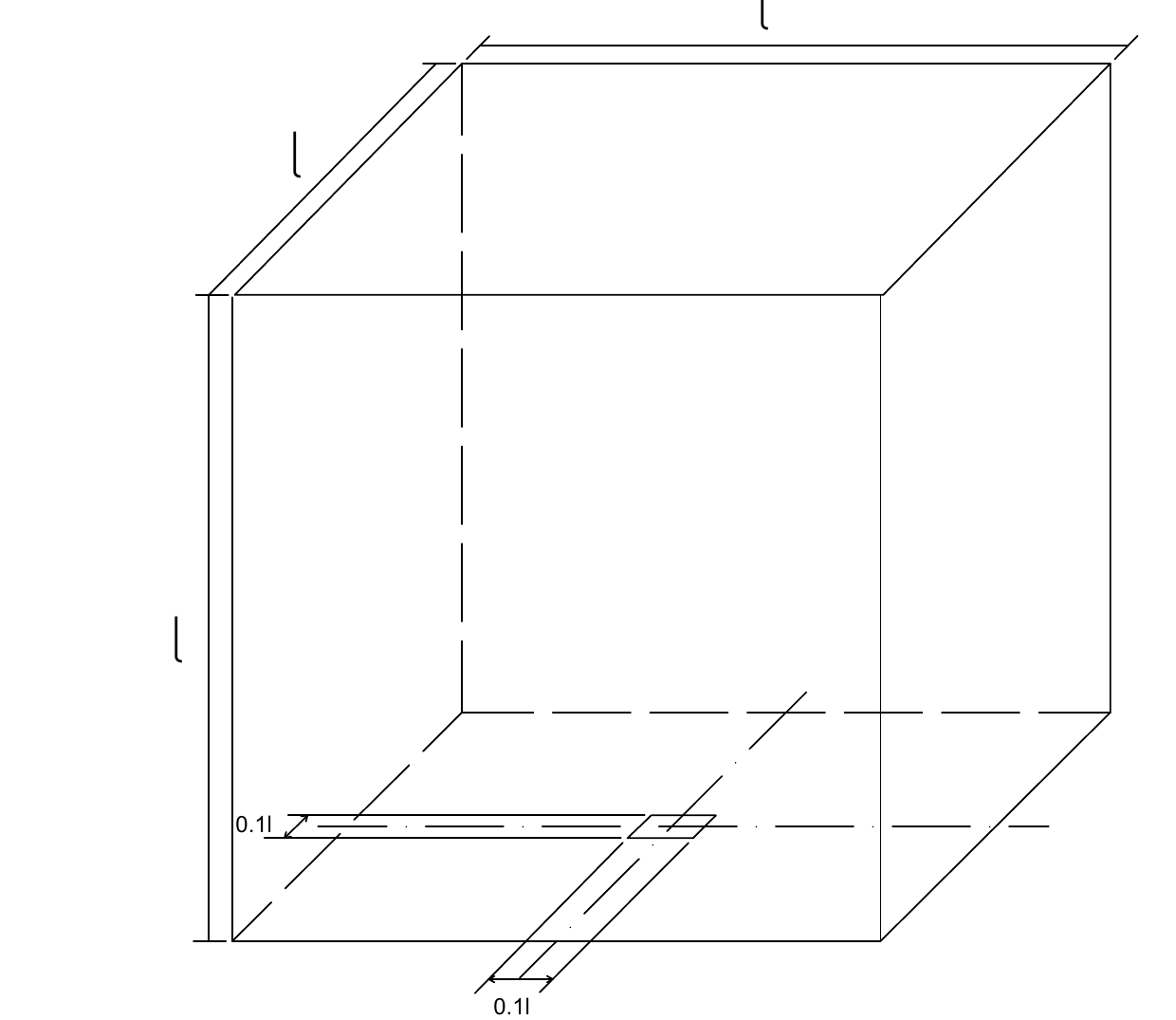}
    \caption{The Dirichlet boundary is on the middle of bottom face(cf. \cite{burger2013three}). See Section~\ref{sec:3d}.}
    \label{fig:model3}
\end{figure}

\begin{figure}[ht!]
    \centering
    \includegraphics[width=0.3\linewidth]{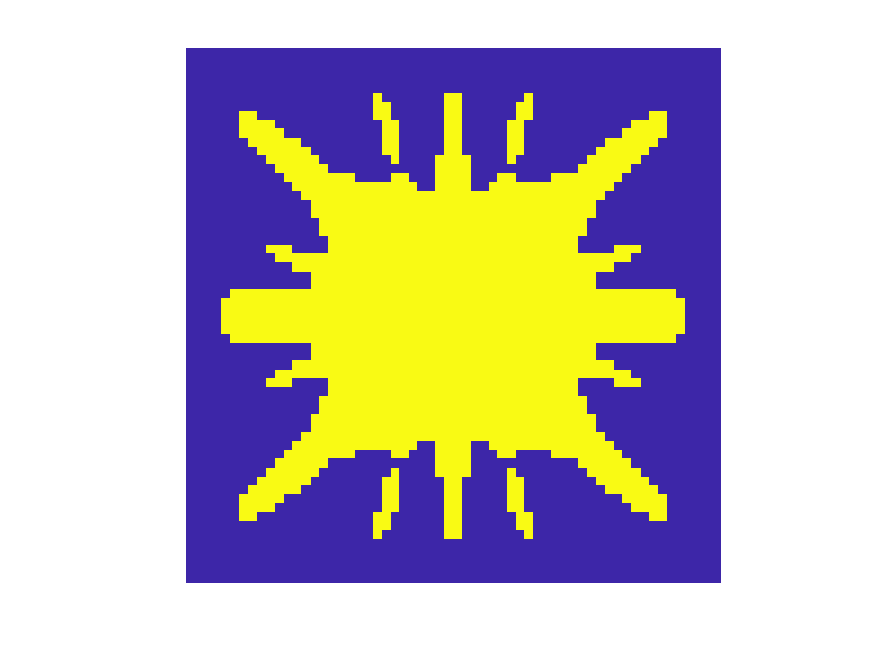}
    \includegraphics[width=0.3\linewidth]{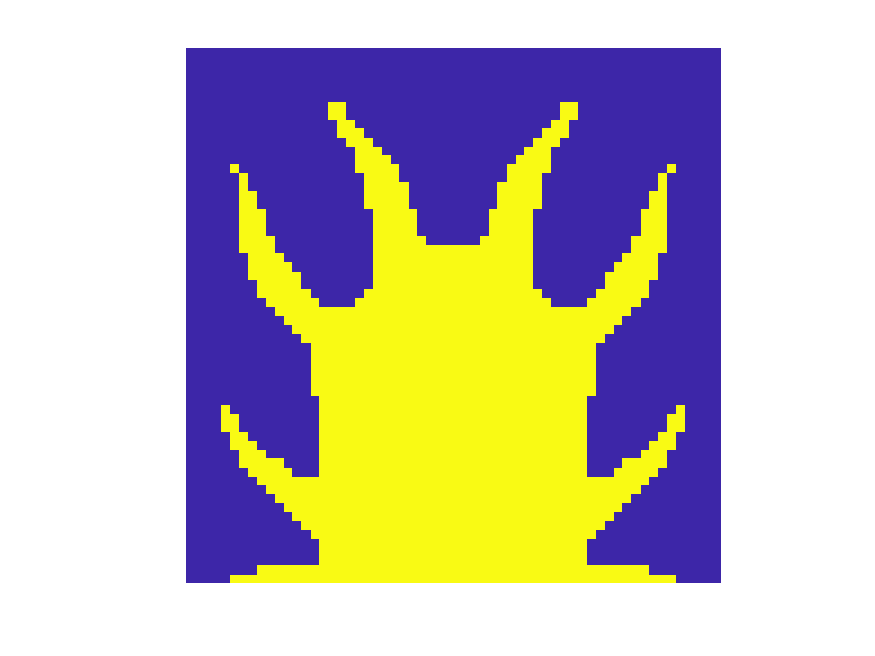}
    \includegraphics[width=0.3\linewidth]{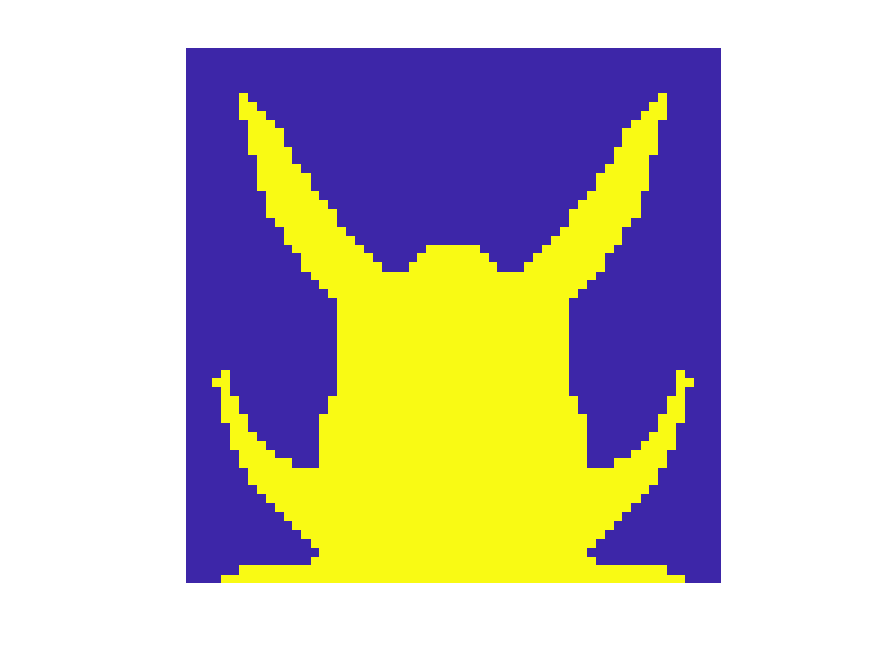}\\
    \includegraphics[width=0.3\linewidth]{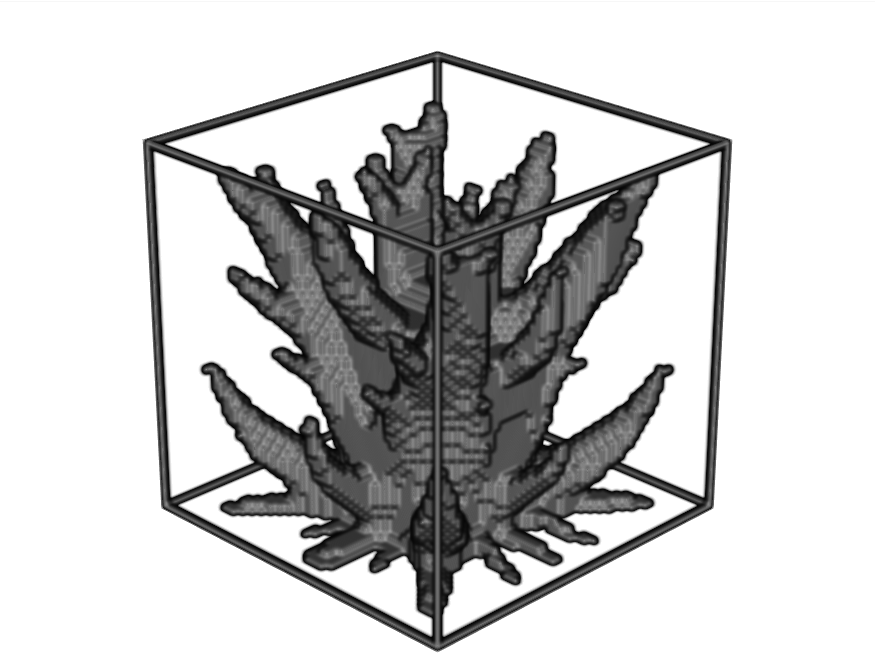}
    \includegraphics[width=0.3\linewidth]{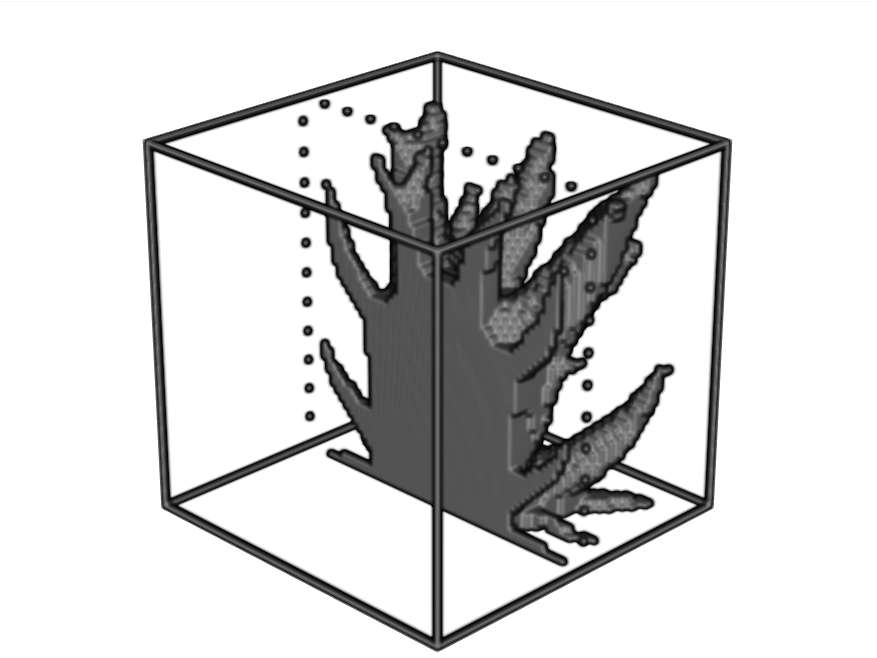}
    \includegraphics[width=0.3\linewidth,trim=2.5cm 9cm 3cm 10cm,clip]{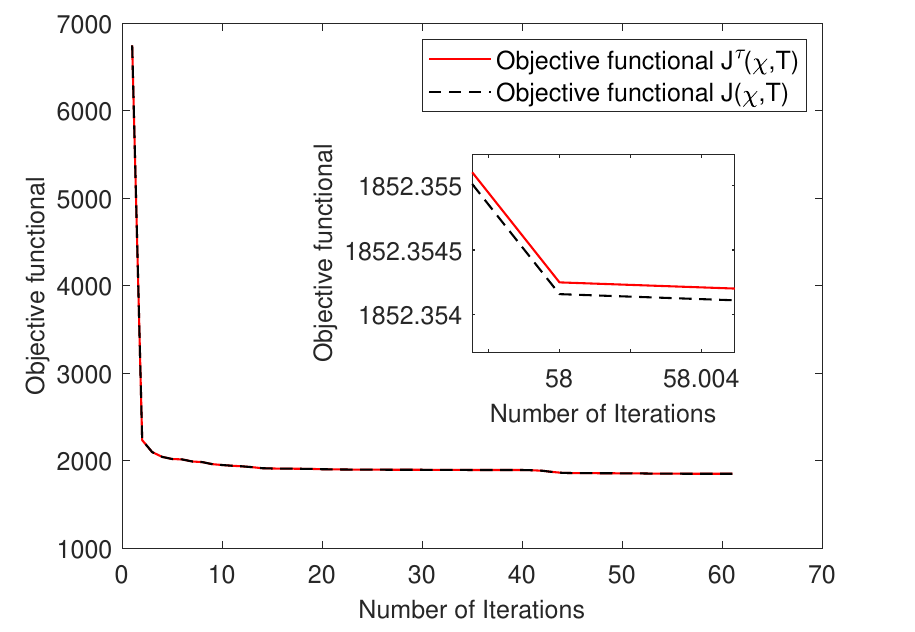}
    \caption{Optimal results for $\kappa_1=20,\ \kappa_2=1$ and volume fraction $\beta = 0.2$ on a $60\times 60 \times 60$ grid. Top-left: A cutting graph of $\chi$ on the plane $z=0$. Top-middle: A cutting graph of $\chi$ on the plane $y=1/2$. Top-right: A cutting graph of $\chi$ on the plane $x=1/2$. The distribution of high-conduction materials $\chi$. Bottom-left: Isometric view. Bottom-middle: Sectional isometric view. Bottom-right: Objective functional values. See Section~\ref{sec:3d}.}
    \label{fig:k20v2}
\end{figure}

\begin{figure}[ht!]
    \centering
    \includegraphics[width=0.3\linewidth]{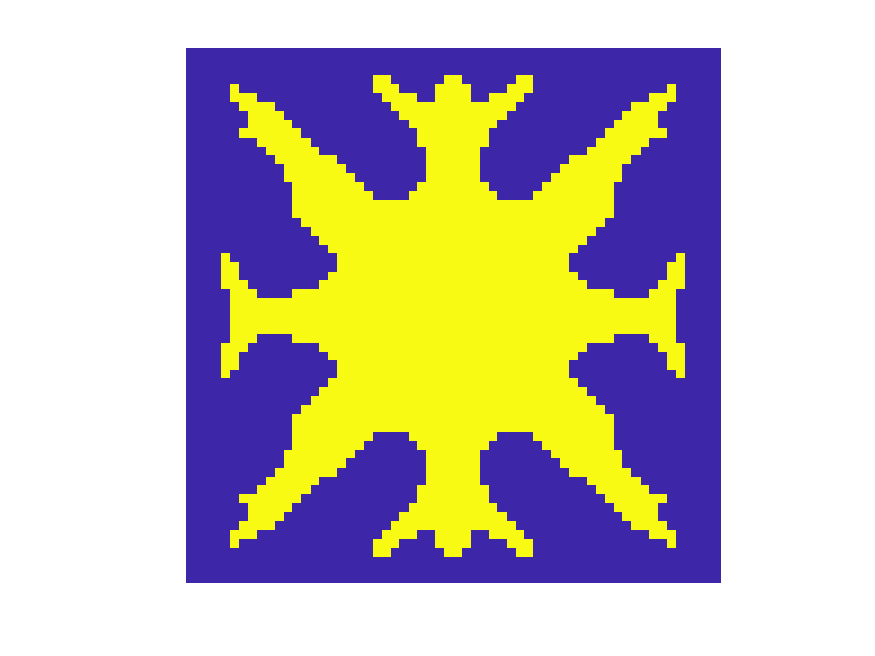}
    \includegraphics[width=0.3\linewidth]{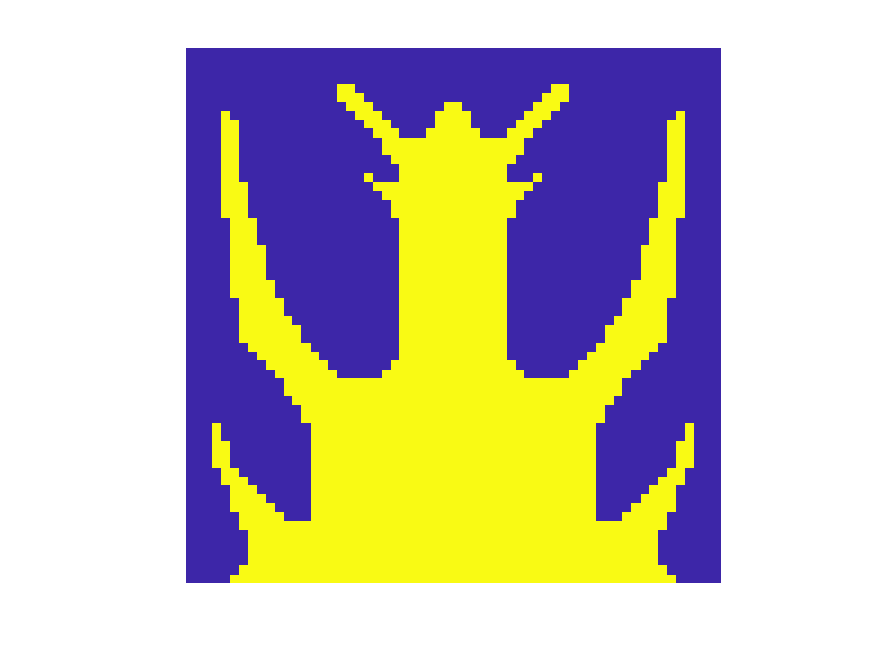}
    \includegraphics[width=0.3\linewidth]{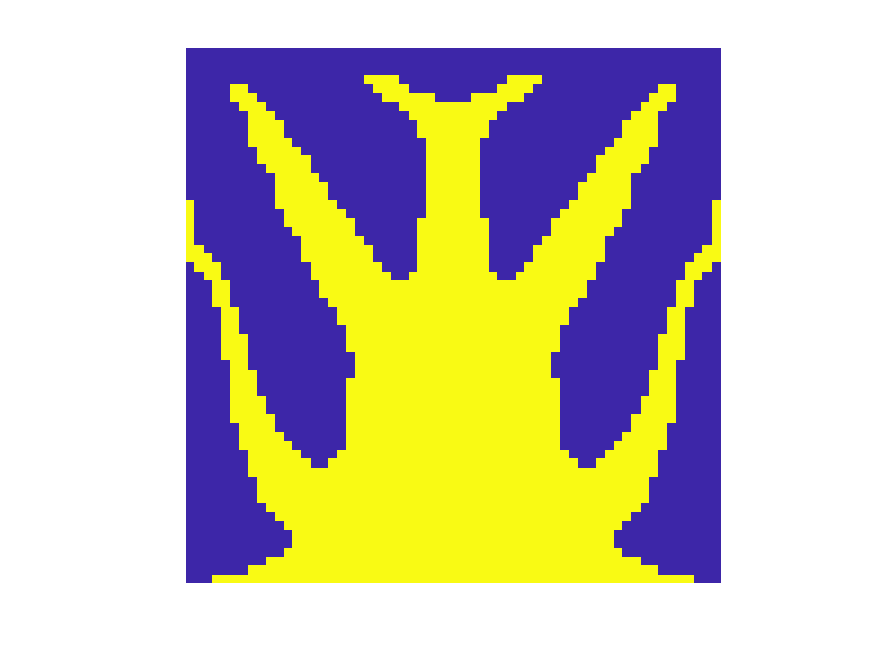}\\
     \includegraphics[width=0.3\linewidth]{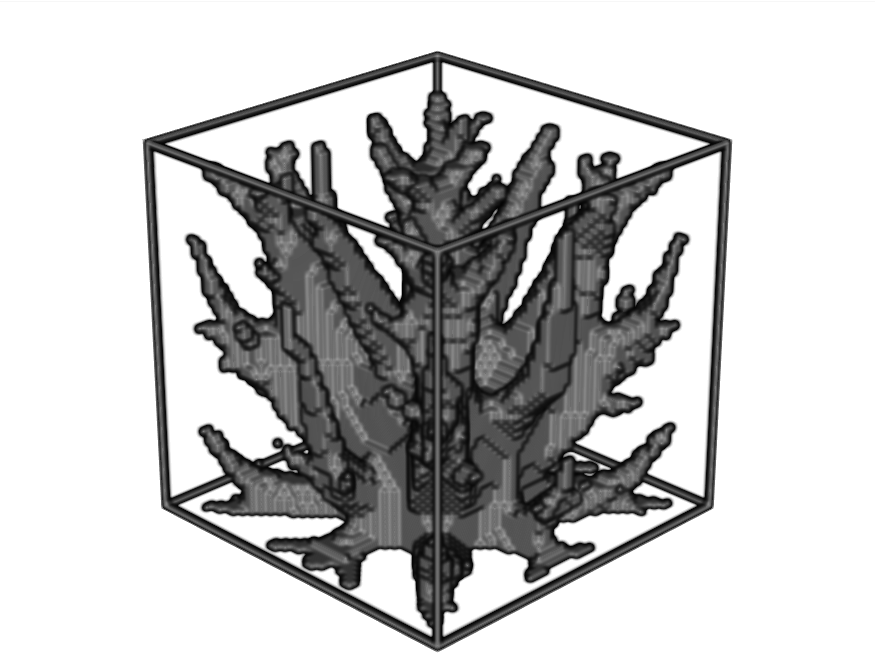}
    \includegraphics[width=0.3\linewidth]{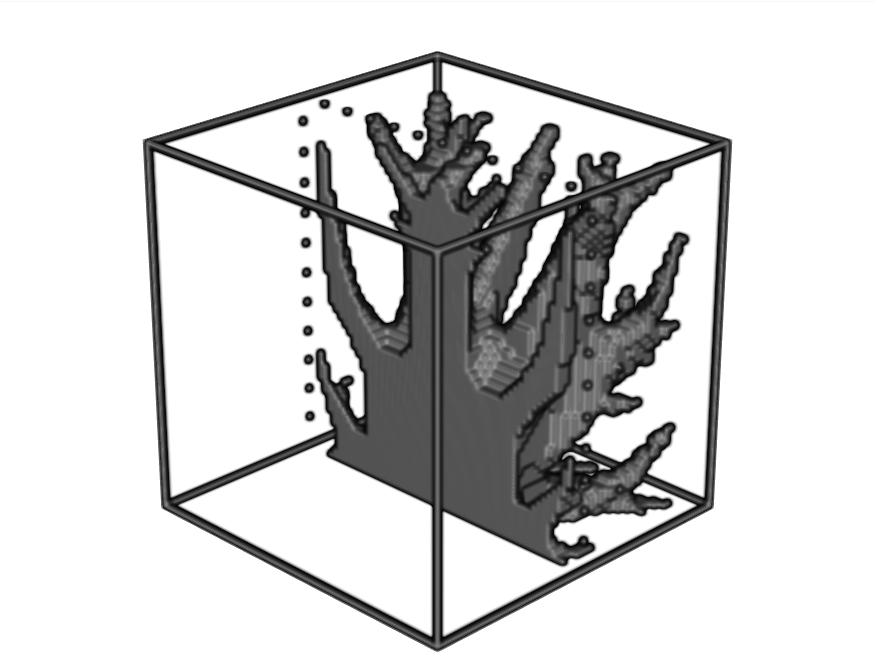}
    \includegraphics[width=0.3\linewidth,trim=2.5cm 9cm 3cm 10cm,clip]{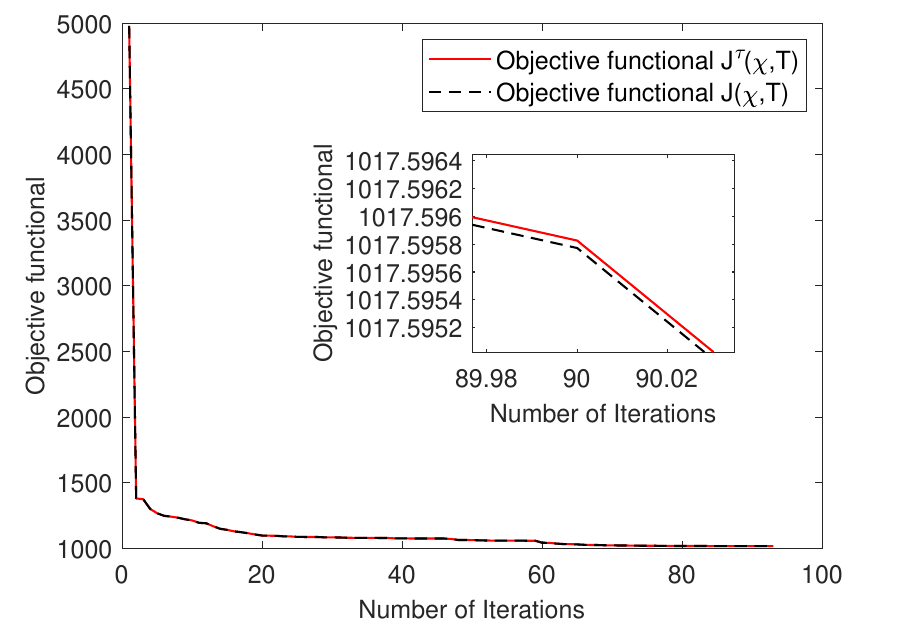}
    \caption{Optimal results for $\kappa_1=40,\ \kappa_2=1$ and volume fraction $\beta = 0.2$ on a $60\times 60 \times 60$ grid. Top-left: A cutting graph of $\chi$ on the plane $z=0$. Top-middle: A cutting graph of $\chi$ on the plane $y=1/2$. Top-right: A cutting graph of $\chi$ on the plane $x=1/2$. The distribution of high-conduction materials $\chi$. Bottom-left: Isometric view. Bottom-middle: Sectional isometric view. Bottom-right: Objective functional values. See Section~\ref{sec:3d}.}
    \label{fig:k40v2}
\end{figure}

% \begin{figure}
%     \centering
%     \includegraphics[width=0.3\linewidth]{fig/3D/k20v3/xy.eps}
%     \includegraphics[width=0.3\linewidth]{fig/3D/k20v3/xz.eps}
%     \includegraphics[width=0.3\linewidth]{fig/3D/k20v3/yz.eps}\\
%     \includegraphics[width=0.3\linewidth]{fig/3D/k20v3/k20v3.eps}
%     \includegraphics[width=0.3\linewidth]{fig/3D/k20v3/k20v3_sectional.eps}
%     \includegraphics[width=0.3\linewidth]{fig/3D/k20v3/k20v3_energy.eps}
%     \caption{Results for $\kappa_1=20,\kappa_2=1$ and $volume\ fraction=0.3$. Top-left: Top view. Top-middle: Left view. Top-right: Front view. Bottom-left: Isometric view. Bottom-middle: Sectional isometric view. Bottom-right: Energy curves.}
%     \label{fig:k20v3}
% \end{figure}

% \begin{figure}
%     \centering
%     \includegraphics[width=0.3\linewidth]{fig/3D/k40v3/xy.eps}
%     \includegraphics[width=0.3\linewidth]{fig/3D/k40v3/xz.eps}
%     \includegraphics[width=0.3\linewidth]{fig/3D/k40v3/yz.eps}\\
%      \includegraphics[width=0.3\linewidth]{fig/3D/k40v3/k40v3.eps}
%     \includegraphics[width=0.3\linewidth]{fig/3D/k40v3/k40v3_sectional.eps}
%     \includegraphics[width=0.3\linewidth]{fig/3D/k40v3/k40v3_energy.eps}
%     \caption{Results for $\kappa_1=40,\kappa_2=1$ and $volume\ fraction=0.3$. Top-left: Top view. Top-middle: Left view. Top-right: Front view. Bottom-left: Isometric view. Bottom-middle: Sectional isometric view. Bottom-right: Energy curves. }
%     \label{fig:k40v3}
% \end{figure}

\begin{figure}
    \centering
    \includegraphics[width=0.3\linewidth]{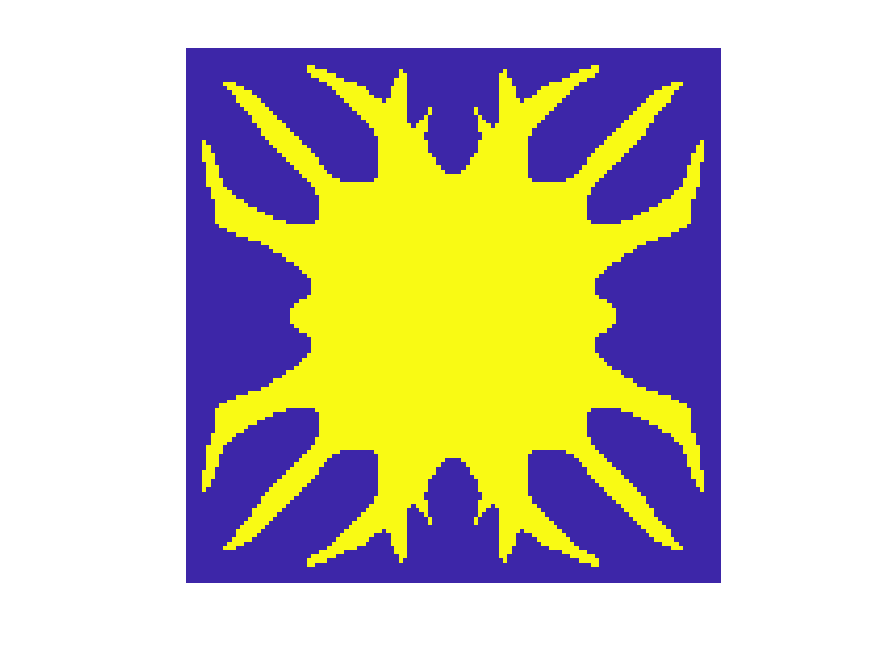}
    \includegraphics[width=0.3\linewidth]{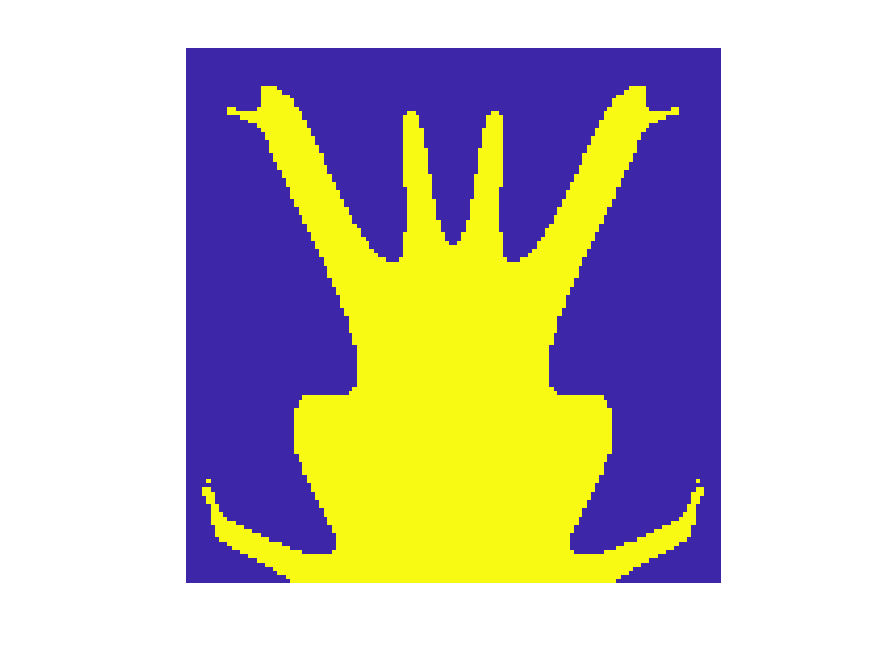}
    \includegraphics[width=0.3\linewidth]{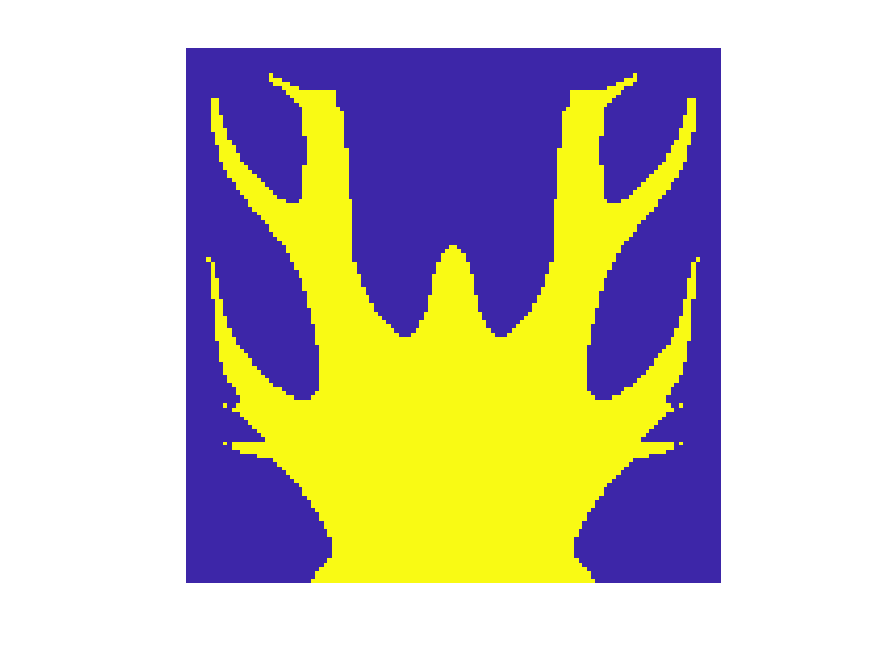}\\
    \includegraphics[width=0.3\linewidth]{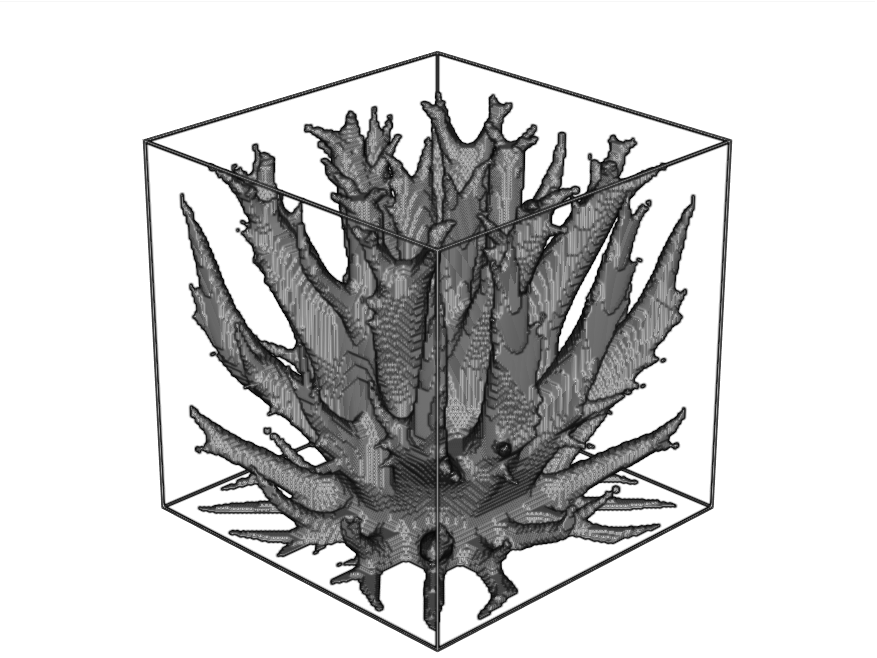}
    \includegraphics[width=0.3\linewidth]{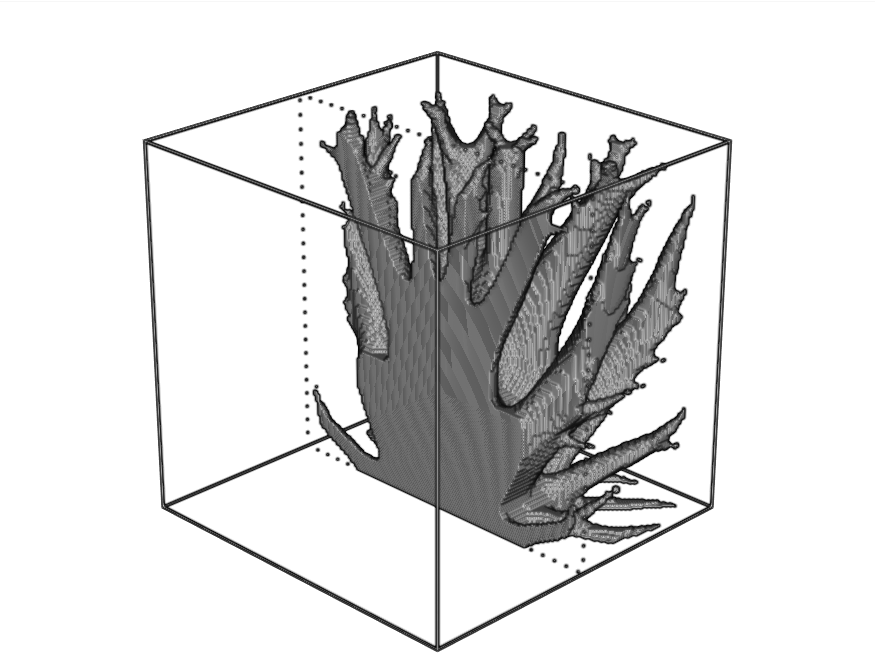}
    \includegraphics[width=0.3\linewidth,trim=2.5cm 9cm 3cm 10cm,clip]{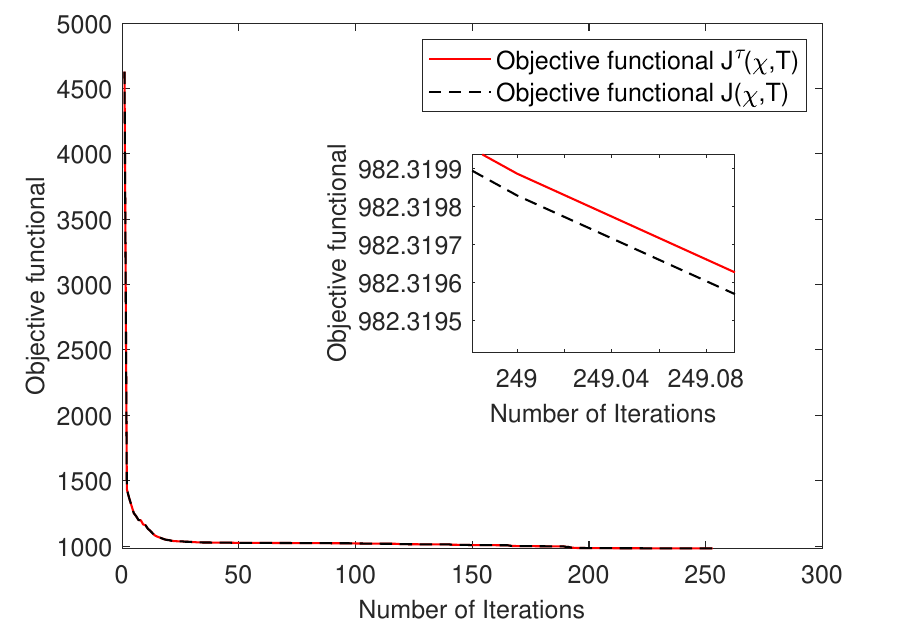}
    \caption{Optimal results for $\kappa_1=40,\kappa_2=1$ and volume fraction $\beta = 0.2$ on a $ 128\times128\times 128$ grid. Top-left: A cutting graph of $\chi$ on the plane $z=0$. Top-middle: A cutting graph of $\chi$ on the plane $y=1/2$. Top-right: A cutting graph of $\chi$ on the plane $x=1/2$. Bottom-left: Isometric view. Bottom-middle: Sectional isometric view. Bottom-right: Objective functional values. See Section~\ref{sec:3d}.}
    \label{fig:N128}
\end{figure}

\section{Conclusions}\label{sec:conclusion}

In this paper, we developed a prediction-correction based iterative convolution-thresholding method (ICTM) for topology optimization of steady-state heat transfer problem. Our objective is to minimize a total energy problem comprising the complementary energy and a perimeter/surface area regularization term, which is constrained by the heat transfer equation. The iterative convolution-thresholding method employs a coordinate descent approach to minimize the approximate energy with an unchanged constraint set. However, the constraint set of the topology optimization problem proposed in this paper can vary. Fortunately, we have effectively resolved the issue of energy non-decrease arising from the changing constraint set by employing the prediction-correction based ICTM. Additionally, we employ a simple thresholding method for updating the indicator functions, which significantly reduces the computational complexity. The numerical results demonstrate the effectiveness and robustness of this method. We have also investigated the impact of the thermal conductivity ratio, the heat generation rate ratio, the mesh size, volume fraction and other parameters on the optimal distribution. We believe that our proposed method can also be extended to muti-physics problems such as heat sink, fluid-structure interaction problems, which will be the focus of our future research.

\section*{Acknowledgements}
This research is supported by the Guangdong Key Lab of Mathematical Foundations for Artificial Intelligence. Authors would like to thank Dinghang Tan for the support on 3D numerical experiments. H. Chen acknowledges support from National Natural Science Foundation of China (Grant No. 12122115, 11771363, 12271457) and Natural Science Foundation of Fujian Province (Grant No. 2023J02003). D. Wang acknowledges support from National Natural Science Foundation of China (Grant No. 12101524), Guangdong Basic and Applied Basic Research Foundation (Grant No. 2023A1515012199) and Shenzhen Science and Technology Innovation Program (Grant No. JCYJ20220530143803007, RCYX20221008092843046). X.-P. Wang acknowledges support from the National Natural Science Foundation of China (NSFC) (No. 12271461), the key project of NSFC (No. 12131010), Shenzhen Science and Technology Innovation Program (Grant: C10120230046), the Hong Kong Research Grants Council GRF (grants 16308421) and the University Development Fund from The Chinese University of Hong Kong, Shenzhen (UDF01002028).

%\clearpage

\bibliography{reference}

\end{document}